\newtheorem{theorem}{Theorem}[section]
\newtheorem{lemma}[theorem]{Lemma}
\newtheorem{definition}[theorem]{Definition}
\newtheorem{remark}[theorem]{Remark}
\numberwithin{equation}{section}
\newcommand{\eqnsection}{
\renewcommand{\theequation}{\thesection.\arabic{equation}}
 \makeatletter   \csname  @addtoreset\endcsname{equation}{section}
   \makeatother}
\def\qed{$\Box $}
\def\R{\mathbb{R}}
\def\E{\mathbb{E}}
\def\P{\mathbb{P}}
\def\0{\mathbf{0}}
\def\1{\mathbf{1}}
\def\Var{{\mathop {{\rm Var\, }}}}
\def\Cov{{\mathop {{\rm Cov\, }}}}
\begin{document}

\title{Time-dependency in hyperbolic Anderson model: Stratonovich regime}

\author{\sc Xia Chen\thanks {Supported in part by
the Simons Foundation \#585506.} }
 
\date{} 
\maketitle

\begin{abstract}

In this paper, the 
 hyperbolic Anderson equation
 generated by a time-dependent Gaussian noise is under
 investigation
 in  two fronts: The solvability and large-$t$ asymptotics.
 The investigation leads to
 a necessary and sufficient condition 
 for existence  and a precise large-$t$
 limit form for the expectation of the solution.
 Three major developments are made
 for achieving these goals: A universal bound
 for Stratonovich moment that guarantees the Stratonovich integrability 
 and ${\cal L}^2$-convergence 
 of the Stratonovich chaos expansion under the best possible condition, a representation 
  of the expected Stratonovich
 moments  in terms of a time-randomized Brownian intersection local time,
 and a large deviation principle for the time-randomized Brownian intersection local time.

\end{abstract}

\begin{quote} {\footnotesize
\underline{Key-words}:   Hyperbolic and parabolic Anderson equation, Stratonovich integrability, multiple Stratonovich integrals, Stratonovich expansion, Wick's formula,
Brownian motion, time-randomized intersection local time, moment asymptotics, intermittency.

\underline{AMS subject classification (2020)}: 60F10, 60G60, 60H05, 
60J65}

\end{quote}

\section{Introduction} \label{intro}

The model studied in this paper is named after the physicist 
Philip Warren Anderson (\cite{Anderson})  who adds a
multiplicative Gaussian noise
to the heat equation in his investigation of magnetic impurities embedded in metals.
Due to its close links to other
physical models such as KPZ equation (\cite{KPZ}),  
especially in the wake of the breakthrough  of    \cite{Hairer}, the study of this equation
 has been rapidly developed. Today, the equation is known as parabolic
Anderson model in literature. We refer the interested readers  to  to
\cite{HHNS} and the references therein for  general information on this subject.

Due to lack of analytic tools such as Feynman-Kac formula, much less have been
known on hyperbolic models. Until very recent, it was widely believed that as
$\partial u/\partial t$ being replaced by $\partial^2u/\partial t^2$,
the hyperbolic equation
has a wilder behavior than its parabolic counterpart. The recent progress shows otherwise:
The hyperbolic equations are solvable  under the conditions (\cite{BCC}, \cite{CDST-1}) generally weaker than
 those posted for the existence of parabolic systems in the same regime.
 
 It is also noticeable that most of the recent development cited above focus
on the setting of time-independent Gaussian noise. The reason behind is that the time-dependency of
the Gaussian field posts  serious challenges and limitations to
the currently available  tools and ideas in dealing with
hyperbolic Anderson models.

In this paper we consider the hyperbolic Anderson equation 
\begin{align}\label{intro-1}
\left\{\begin{array}{ll}\displaystyle
\frac{\partial^2 u}{\partial t^2}(t,x)=\Delta u(t,x)
+ \dot{W}(t, x) 
u(t,x)\,, \hskip.2in (t, x)\in \R^+\times\R^d\\\\
u(0, x)=1  \hskip.1in\hbox{and} \hskip.1in
\frac{\partial u}{\partial t}(0,x)=0\,, \hskip.2in x\in\R^d\end{array}\right.
\end{align}
run by a mean zero and possibly generalized
Gaussian noise $\dot{W}(t, x)$ 
with
the covariance function
\begin{align}\label{intro-2}
  \Cov \big(\dot{W}(s, x), \dot{W}(t, y)\big)
=\vert s-t\vert^{-\alpha_0}\gamma(x-y)\,, \hskip.2in s,t\in\R_+,\hskip.05in  x,y\in\R^d,
\end{align}
where $0<\alpha_0<1$. As a  covariance function,  the non-negative definiteness   of  $\gamma(\cdot)$   implies that it admits
a spectral measure $\mu(d\xi)$ on
$\R^d$ uniquely defined by the relation
\begin{align}\label{intro-3}
  \gamma(x)=\int_{\R^d} e^{i\xi\cdot x}\mu(d\xi)\,, \hskip.3in x\in\R^d\,. 
\end{align}
 
Throughout this work, we assume that $\gamma(\cdot)\ge 0$ and $d=1,2,3$. 
The system is set up in Stratonovich regime. Roughly speaking, it means
 that the equation (\ref{intro-1}) is the result of the approximation by 
classical  wave equations run by the smoothed
Gaussian noise $\dot{W}_{\epsilon, \delta}(t, x)$.  We shall
provide the details of the construction of the solution 
in Section \ref{M}.

Our first concern is to find the best condition for the existence of  solution. The conditions for solvability
are often formulated
in term of the  integrability of the spectral measure $\mu(d\xi)$.  
In a different set-up known as Skorohod regime,
Chen, Deya, Song and Tindel  (\cite{CDST} posted the optimal condition
\begin{align}\label{intro-4}
  \int_{\R^d}\bigg(\frac{1}{1+\vert\xi\vert^2}\bigg)^{3-\alpha_0\over 2}\mu(d\xi)<\infty
\end{align}
for the existence/uniqueness of the system (\ref{intro-1}).

In their follow-up paper \cite{CDST-1} for
Stratonovich regime, (\ref{intro-1}) is solved under
an apparently less optimal assumption in the dimensions $d=1,2$.

\begin{theorem}\label{th-1}  Let $d=1,2,3$.
\begin{enumerate}
\item[(1)]   Under  the condition
  \begin{align}\label{intro-5} 
   \int_{\R^d}\bigg(\frac{1}{1+\vert \xi\vert^2}\bigg)^{2-\alpha_0\over 2}\mu(d\xi)<\infty
 \end{align}  
the equation (\ref{intro-1}) has a   solution in the sense of
Definition \ref{d.mild_solution} given in Section \ref{M}.
\item[(2)] If the equation (\ref{intro-1}) has a  square integrable solution $u(t,x)$ that admits the  Stratonovich expansion (see (\ref{M-3}))
  for some $t>0$, then condition \eqref{intro-5}
  must be satisfied.  
\end{enumerate}
\end{theorem}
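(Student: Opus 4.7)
The plan is to prove the two directions separately, with the Stratonovich chaos expansion (\ref{M-3}) serving as the common framework. For Part (1), I would implement the mollification scheme of Section \ref{M}: smooth $\dot W$ in both time and space to obtain $\dot W_{\epsilon,\delta}$, solve the resulting classical wave equation, and write the approximate solution as the chaos series
\begin{equation*}
u_{\epsilon,\delta}(t,x)=\sum_{n=0}^{\infty} J_n^{\epsilon,\delta}(t,x),
\end{equation*}
where $J_n^{\epsilon,\delta}$ is the $n$-fold iterated integral of the products $G_{t-s_1}(x-y_1)\,G_{s_1-s_2}(y_1-y_2)\cdots G_{s_{n-1}-s_n}(y_{n-1}-y_n)$ against $\dot W_{\epsilon,\delta}$. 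Since the driving noise is now classical, Wick's formula expresses $\E[J_n^{\epsilon,\delta}(t,x)^2]$ as a sum over all $(2n-1)!!$ pairings of $2n$ space-time points, considerably richer than the non-crossing pairings that govern the Skorohod regime. The paper's universal Stratonovich moment bound (the first of the three major contributions highlighted in the abstract) is what converts this combinatorial sum into an estimate of the form $\E[J_n^{\epsilon,\delta}(t,x)^2]\le C(t)^n/n!$ with $C(t)<\infty$ uniform in $\epsilon,\delta$ under (\ref{intro-5}). Summability yields an $L^2$ limit $u(t,x)$, and the mild-solution property in Definition \ref{d.mild_solution} follows by passing to the limit in the approximating equations.

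For Part (2), assume $u(t,x)\in L^2$ admits the Stratonovich expansion. The decisive observation, specific to the Stratonovich regime, is that the variance of the second-chaos term
\begin{equation*}
J_2(t,x)=\int_{0<s_2<s_1<t}\int_{\R^{2d}} G_{t-s_1}(x-y_1)\,G_{s_1-s_2}(y_1-y_2)\,W(ds,dy)
\end{equation*}
must include the Wick ``self-contraction'' pairing $(1,2)(3,4)$, which is absent in the Ito/Skorohod expansion. Its contribution equals the square of
\begin{equation*}
A(t,x)=\int_{0<s_2<s_1<t}\int_{\R^{2d}} G_{t-s_1}(x-y_1)\,G_{s_1-s_2}(y_1-y_2)\,\gamma(y_1-y_2)\,|s_1-s_2|^{-\alpha_0}\,dy\,ds.
\end{equation*}
Using $\int G_{t-s_1}(x-y_1)\,dy_1=t-s_1$ together with the Fourier identity $\int G_\tau(z)\gamma(z)\,dz=\int_{\R^d}\sin(\tau|\xi|)|\xi|^{-1}\mu(d\xi)$ (valid in $d=1,2,3$), the substitution $v=(s_1-s_2)|\xi|$ converts the double time integral into $|\xi|^{\alpha_0-2}\int_0^{s_1|\xi|}v^{-\alpha_0}\sin v\,dv$, which is bounded near $|\xi|=0$ and asymptotic to $C_{\alpha_0}|\xi|^{\alpha_0-2}$ as $|\xi|\to\infty$. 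Hence $A(t,x)<\infty$ is equivalent to $\int_{\R^d}(1+|\xi|^2)^{-(2-\alpha_0)/2}\mu(d\xi)<\infty$, which is precisely (\ref{intro-5}).

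The principal obstacle is on the existence side: proving a universal Stratonovich moment bound that saturates exactly at condition (\ref{intro-5}). In the Skorohod regime only non-crossing pairings contribute and one can iterate Ito isometries, but the Stratonovich sum over all $(2n-1)!!$ pairings is drastically larger and requires a different mechanism. The paper's announced representation of the expected Stratonovich moments in terms of a time-randomized Brownian intersection local time is the device for recovering the sharp $1/n!$ decay in the bound; without it, the existence direction cannot reach the optimal integrability. Once this moment bound is in hand, the remaining steps (mollification, explicit Fourier identities for $G_\tau$, and tightness of the approximations) are standard.
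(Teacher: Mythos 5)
Your proposal follows essentially the same route as the paper: for existence, mollify, expand in Stratonovich chaos, bound $\E\big[S_{n,\epsilon,\delta}\big(g_n(\cdot,t,0)\big)\big]^2$ uniformly in $(\epsilon,\delta)$ by $C^nt^{2n}/n!$ via the universal moment bound, sum the series, and pass to the limit in the mild equation; for necessity, compute $\E S_2\big(g_2(\cdot,t,0)\big)$ explicitly using $\int_{\R^d}G(s,x)dx=s$ and ${\cal F}\big(G(t,\cdot)\big)(\xi)=\sin(t\vert\xi\vert)/\vert\xi\vert$, and read off (\ref{intro-5}) from the large-$\vert\xi\vert$ behavior of the resulting sine integral, exactly as in Section \ref{E}. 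Two corrections are in order. First, the $1/n!$ decay is delivered by Theorem \ref{th-3} alone: the partition $\{Q_0,Q_1,Q_2\}$ satisfies $\#(Q_0)+2^{-1}\#(Q_2)=n$, producing a factor $(t/n)^{2n}$ that beats the pairing count $(2n)!/(2^nn!)$; the time-randomized Brownian intersection local time representation (Theorem \ref{th-5}) plays no role in Theorem \ref{th-1} and is used only for the asymptotics of Theorem \ref{th-2}, so your final paragraph misattributes the mechanism. Second, the existence of the limits defining $S_n\big(g_n(\cdot,t,x)\big)$ — the Cauchy property in $(\epsilon,\delta)$, i.e.\ Theorem \ref{th-4} — is not a routine tightness argument: because $\gamma$ and (in $d=3$) $G$ may only be generalized functions, the paper devotes Section \ref{S} to it, combining weak convergence of the measures $F_{\bar\epsilon}$ via Laplace-transform continuity with a separate uniform-in-$\bar\epsilon$ monotonicity argument in the time mollification parameter; this step should not be dismissed as standard.
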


For the purpose of comparison, we introduce the parabolic Anderson model
\begin{align}\label{intro-6}
\left\{\begin{array}{ll}\displaystyle
\frac{\partial u}{\partial t}(t,x)=\Delta u(t,x)
+ \dot{W}(t, x) 
u(t,x)\,, \hskip.2in (t, x)\in \R^+\times\R^d\\\\
u(0, x)=1   \hskip.2in x\in\R^d. \end{array}\right.
\end{align}
It is well known that the condition (see, e,g. Theorem 4.6, \cite{Song})
for (\ref{intro-6}) to be solvable (in a sense same as 
Theorem \ref{th-1}) is
 \begin{align}\label{intro-7} 
  \int_{\R^d}\bigg(\frac{1}{1+\vert \xi\vert^2}\bigg)^{1-\alpha_0}\mu(d\xi)<\infty.
 \end{align}  
In other words, the hyperbolic Anderson model is solvable under a condition genuinely weaker than the one set for
its parabolic counterpart when it comes to the Gaussian filed that is fractional in time.

In next main theorem is about the increasing rate of $\E u(t,x)$ as $t\to\infty$.
To this end, we assume the
homogeneity for the covariance structure:  
\begin{align}\label{intro-8} 
  \gamma(cx)=c^{-\alpha}\gamma(x)\,, \hskip.2in x\in\R^d\hskip.1in c>0
 \end{align} 
 for some $\alpha>0$.
 Taking $f(\lambda)=(1+\lambda^2)^{-{2-\alpha_0\over 2}}$ and $v(d\xi)=\mu(d\xi)$ in   \cite[Lemma 3.10]{CDST}
yields 
$$
\int_{\R^d}\bigg(\frac{1}{ 1+\vert\xi\vert^2}\bigg)^{2-\alpha_0\over 2}
\mu(d\xi)=\alpha\mu\{\xi\in\R^d;\hskip.05in\vert\xi\vert\le 1\}
\int_0^\infty \bigg(\frac{1}{ 1+\rho^2}\bigg)^{2-\alpha_0\over 2}\frac{d\rho}{\rho^{1-\alpha}}
$$
as far as either side is finite.  This
shows that under the homogeneity (\ref{intro-8}) on the 
noise covariance condition,
the condition (\ref{intro-6}) becomes   ``$\alpha_0+\alpha<2$''.
  In addition (Remark 1.4, \cite{CDST}), the fact that $\gamma(\cdot)$ is non-negative and
  non-negative
  definite (for being qualified as co-variance function)
  requires that $\alpha\le d$. Further, the only setting where
``$\alpha=d$'' is allowed
under $\alpha<2$ is when $\alpha=d=1$, or when $\gamma (\cdot)$ is a
constant multiple of Dirac function.

\begin{theorem}\label{th-2} Under the homogeneity condition (\ref{intro-8})
  with 
  \begin{align}\label{intro-9}
  \alpha_0+\alpha<2
 \end{align}
 with $0<\alpha_0<1$ and $0<\alpha<d$ or with $0<\alpha_0<1$ and $\alpha=d=1$,
 \begin{align}\label{intro-10} 
&\lim_{t\to\infty}t^{-{4-\alpha-\alpha_0\over 3-\alpha}}\log\E u(t,x)
=(3-\alpha)\bigg({2(4-\alpha-2\alpha_0)^{4-\alpha-2\alpha_0\over 2}\over
(4-\alpha-\alpha_0)^{4-\alpha-\alpha_0}}\Big({{\cal M}\over 4-\alpha}\Big)^{4-\alpha\over 2}\bigg)^{1\over 3-\alpha}
\end{align}
where
\begin{align}\label{intro-11}
  {\cal M}&=\sup_{g\in {\cal A}_d}\bigg\{\bigg(\int_0^1\!\!\int_0^1\int_{\R^d\times\R^d}
  {\gamma(x-y)\over\vert s-r\vert^{\alpha_0}}
  g^2(s, x)g^2(r, y)dxdydsdr\bigg)^{1/2}\\
  &-\int_0^1\!\int_{\R^d}\vert\nabla_x g(s, x)\vert^2dxds\bigg\}\nonumber
\end{align}
(which is finite under $\alpha<2$ by Lemma 5.2,  \cite{Chen-5}) and
$$
{\cal A}_d=\bigg\{g(s, x);\hskip.1in g(s,\cdot)\in W^{1,2}(\R^d)\hskip.1in \hbox{and}\hskip.05in
\int_{\R^d}\vert g(s, x)\vert^2dx=1\hskip.05in\hbox{for every $0\le s\le 1$}\bigg\}\,. 
$$
\end{theorem}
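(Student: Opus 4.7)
The plan is to combine the three ingredients announced in the abstract: a Wick-type formula for expected Stratonovich moments, a Brownian representation of those moments in terms of a time-randomized intersection local time, and a large deviation principle (LDP) for that functional.

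First I would use the Stratonovich chaos expansion guaranteed by Theorem~\ref{th-1}, together with the Wick formula for expected Stratonovich multiple integrals (the first major ingredient of the paper), to write $\E u(t,x)$ as a series whose $n$-th term is a sum over perfect pairings of multiple integrals of products of wave kernels against the covariance $\gamma(x_i-x_j)|s_i-s_j|^{-\alpha_0}$. The second ingredient recasts this series as
\begin{equation*}
\E u(t,x) \;=\; \E\exp\!\Bigl(\tfrac{1}{2}L_t\Bigr),\qquad
L_t \;=\; \int_0^t\!\!\int_0^t \frac{\gamma\bigl(B^{(1)}_{\tau^{(1)}_s}-B^{(2)}_{\tau^{(2)}_r}\bigr)}{|s-r|^{\alpha_0}}\,ds\,dr,
\end{equation*}
where $B^{(1)},B^{(2)}$ are independent Brownian motions and $\tau^{(1)},\tau^{(2)}$ are independent time-randomizations inherited from the retarded wave Green's function.

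Next I would exploit the homogeneity (\ref{intro-8}) together with Brownian self-similarity. The change of variables $(s,r)\mapsto(ts,tr)$ combined with $B_{tu}\eid\sqrt{t}\,B_u$ and $\gamma(\sqrt{t}\,\cdot)=t^{-\alpha/2}\gamma(\cdot)$ yields
\begin{equation*}
L_t\;\eid\;t^{(4-\alpha-2\alpha_0)/2}\,\widetilde{L}_1
\end{equation*}
for a unit-time version $\widetilde{L}_1$ of the same type. Hence $\log\E u(t,x)$ is the log-Laplace transform of $\widetilde{L}_1$ at the growing parameter $\lambda_t=\tfrac{1}{2}t^{(4-\alpha-2\alpha_0)/2}$. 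The third major ingredient, an LDP for $\widetilde{L}_1$ whose rate function is the Dirichlet energy restricted to the class ${\cal A}_d$, lets me apply Varadhan's lemma. A final scaling optimization $g(s,x)\mapsto\kappa^{d/2}g(s,\kappa x)$ in the resulting variational problem converts the $t$-dependent supremum back to the parameter-free ${\cal M}$ and produces both the growth exponent $(4-\alpha-\alpha_0)/(3-\alpha)$ and the explicit prefactor in (\ref{intro-10}) involving the powers of $4-\alpha-\alpha_0$, $4-\alpha-2\alpha_0$, and $4-\alpha$.

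The hardest step is the LDP for the time-randomized Brownian intersection local time $\widetilde{L}_1$. The randomizations $\tau^{(i)}$ come from the wave propagator rather than from a Markovian device, so the G\"artner--Ellis theorem does not apply directly; one must work with the occupation densities of the randomized paths $s\mapsto B^{(i)}_{\tau^{(i)}_s}$ on $[0,1]\times\R^d$, establish exponential tightness against the double singularity of $\gamma(x-y)|s-r|^{-\alpha_0}$, and identify the limiting variational problem with ${\cal M}$. The universal Stratonovich moment bound (the first ingredient) is needed here in its exponential-integrability form to secure the upper bound; once the LDP is in place, the scaling reduction and Varadhan's lemma finish the proof.
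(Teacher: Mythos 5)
Your outline has the right general shape (chaos expansion, Wick's formula, a Brownian representation, an LDP, Varadhan), but the central representation you posit is not available and the quantitative route you describe cannot produce the stated exponent. The paper never establishes a pointwise identity $\E u(t,x)=\E\exp(\tfrac12 L_t)$ for an intersection local time of two time-randomized Brownian motions; indeed it stresses that the lack of such a Feynman--Kac formula is exactly why only $\E u$ (and not $\E u^p$, $p\ge 2$) is treated. What is actually proved (Theorem \ref{th-5}) is an identity at the level of the Laplace transform in $t$ of each even Stratonovich moment:
$\int_0^\infty e^{-\theta t}\E S_{2n}(g_{2n}(\cdot,t,0))\,dt$ equals $\tfrac{\theta}{2}2^{-3n}(n!)^{-1}\int_0^\infty e^{-\theta^2t/2}\E_0[\,\cdot\,]^n\,dt$, where the bracket is $\int_0^t\!\int_0^t(\theta|s-r|+i(\beta(s)-\beta(r)))^{-\alpha_0}\gamma(B(s)-B(r))\,ds\,dr$ built from a \emph{single} spatial Brownian motion $B$ and an auxiliary one-dimensional $\beta$ that enters the \emph{time} covariance as a complex power --- not from two independent motions with subordinated clocks. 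The passage from $n$-asymptotics of $\E S_{2n}(g_{2n}(\cdot,1,0))$ (which behave like $c^n/(n!)^{3-\alpha}$) to $t$-asymptotics of $\E u$ then goes through scaling $\E S_{2n}(g_{2n}(\cdot,t,0))=t^{(4-\alpha-\alpha_0)n}\E S_{2n}(g_{2n}(\cdot,1,0))$ and Mittag--Leffler asymptotics; the factor $3-\alpha=(4-\alpha-\alpha_0)-(1-\alpha_0)$ and the exponent $(4-\alpha-\alpha_0)/(3-\alpha)$ emerge only from the interplay of the wave-to-heat Laplace identity $\theta\mapsto\theta^2/2$ with Gamma-function bookkeeping. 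Your direct scaling $L_t\eid t^{(4-\alpha-2\alpha_0)/2}\widetilde L_1$ followed by Varadhan at parameter $\tfrac12 t^{(4-\alpha-2\alpha_0)/2}$ would instead produce the parabolic exponent $(4-\alpha-2\alpha_0)/(2-\alpha)$ of (\ref{intro-12}), so that step would fail.

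You also misplace the hard point of the LDP. The difficulty is not that the randomization is non-Markovian; it is that the natural exponential-moment asymptotics for $\int\!\!\int|s-r|^{-\alpha_0}\gamma(B(s)-B(r))$ require $2\alpha_0+\alpha<2$, whereas only $\alpha_0+\alpha<2$ is assumed here. The paper resolves this by showing that in $|\theta(s-r)+i(\beta(s)-\beta(r))|^{-\alpha_0}$ the deterministic increment $s-r$ dominates at the large-deviation scale (even though $\beta$ dominates locally near the diagonal), via a triangular decomposition of $[0,t]^2$ into near-diagonal blocks (controlled through the $\beta$-singularity and a fine mesh $2^{-(N+1)}t$) and dyadic off-diagonal rectangles (controlled through the $|s-r|^{-\alpha_0}$ bound and H\"older interpolation). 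Your proposal of exponential tightness for occupation densities of randomized paths does not address this obstruction, and without it the upper bound of the LDP --- and hence of (\ref{intro-10}) --- is out of reach.
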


To the author's best knowledge, Theorem \ref{th-2} appears to be the first time
that a  precise long term asymptotics are obtained for the hyperbolic Anderson models with
time-fractional random noise.  A result close to (\ref{intro-11}) found in literature is obtained by Balan and Conus
(Theorem 2.1, \cite{BC}) 
where the system (\ref{intro-1}) is set up in Skorohod regime and the bounds
$$
\limsup_{t\to\infty}t^{-{4-\alpha -\alpha_0\over 3-\alpha}}\log\E \vert u(t,x)\vert^p\le C_p<\infty\hskip.2in (p\ge 2)
$$
$$
\liminf_{t\to\infty}t^{-{4-\alpha -\alpha_0\over 3-\alpha}}\log\E u^2(t,x)>0
$$
are obtained.

In the case of parabolic Anderson model (\ref{intro-6}), the moment asymptotics (Theorem 6.1, \cite{CHSX})
follow the pattern
\begin{align}\label{intro-12}
\lim_{t\to\infty}t^{-{4-\alpha -2\alpha_0\over 2-\alpha}}\log\E u^p(t,x)\hskip.2in p=1,2,\cdots
\end{align}
under the scaling property (\ref{intro-8})) with
\begin{align}\label{intro-13}
2\alpha_0+\alpha<2.
\end{align}
Comparing (\ref{intro-10}) and (\ref{intro-9}) with (\ref{intro-12}) and (\ref{intro-13}), respectively,
one can see the contribution from the time component of the Gaussian field are different between
hyperbolic and parabolic settings. See Remark \ref{B-0} below for an explanation from a new perspective.

In the recent work (\cite{CH}) on the hyperbolic Anderson model with time-independent noise $\dot{W}(x)$,
the solvability is established under the Dalang's condition
 \begin{align}\label{intro-14}
  \int_{\R^d}{1\over 1+\vert\xi\vert^2}\mu(d\xi)<\infty
  \end{align}
 and  the long term moment asymptotics are established in the form of
\begin{align}\label{intro-15}
\lim_{t\to\infty}t^{-{4-\alpha\over 3-\alpha}}\log\E u^p(t,x)\hskip.2in p=1,2,\cdots
\end{align}
under the condition $\alpha <2$.

 On the side of idea development, the current work is partially motivated by \cite{CH} 
 where the Stratonovich moment is represented in terms of the intersection local times
 $$
 \int_0^t\!\int_0^t\gamma\big(B(s)-B(r)\big)dsdr\hskip.1in \hbox{and}\hskip.1in
  \int_0^t\!\int_0^t\gamma\big(B(s)-\widetilde{B}(r)\big)dsdr
 $$
 where $B(t)$ abd $\widetilde{B}(t)$ are independent $d$-dimensional Brownian motions
  (see Corollary 3.3, \cite{CH} for details).
  This connection is established on the simple fact ((3.6), \cite{CH}) that
 \begin{align}\label{intro-16}
\int_0^\infty e^{-\lambda t}G(t,x)dt={1\over 2}\int_0^\infty \exp\Big\{-{\lambda^2\over 2}t\Big\}p(t,x)dt\hskip.2in (\lambda>0)
\end{align}
where $p(t,x)$ is the Brownian semi-group
 \begin{align}\label{intro-17}
p(t,x)=(2\pi t)^{-d/2}\exp\Big\{-{\vert x\vert^2\over 2t}\Big\}\hskip.2in (t,x)\in\R_+\times\R^d
\end{align}
and where $G(t,x)$ is the fundamental solution (see (\ref{M-2}) below)
of the wave equation. Indeed, one of the crucial observations (Theorem \ref{th-5}) made in the current work is a
link  between
the Stratonovich moment and the time-randomized intersection local time
$$
\int_0^t\!\int_0^t\Big(\vert s-r\vert +i\big(\beta(s)-\beta(r)\big)\Big)^{-\alpha_0}\gamma\big(B(s)-B(r)\big)dsdr
$$
where $\beta(t)$ is an 1-dimensional Brownian motion independent of $B(t)$.
Accordingly, 
 a large deviation principle
(Theorem \ref{th-6}) for the time-randomized intersection local time is established that requires some new ideas
(see Remark \ref{T-0} and the discussion at the beginning of Section \ref{T}).

Another important task carried out in this paper is the legalization of Stratanovich expansion (\ref{M-3})
under the best condition (\ref{intro-5}). To this end, a universal
bound (Theorem \ref{th-3}) is established that is responsible for the Stratonovich integrability and ${\cal L}^2$-convergence of  the Stratonovich expansion. It should be pointed out that Stratonovich moment demands a
level of technology higher than its Skorohod counter part.

This paper also brings a different idea on the treatment of the time-covariance $\vert\cdot\vert^{-\alpha_0}$
(introduced in (\ref{intro-2})). There have been two ways in literature in handling $\vert\cdot\vert^{-\alpha_0}$.
In the work \cite{BC} and some of its follow-up papers, the Hardy–Littlewood–Sobolev inequality is used
(Lemma B.3, \cite{BC}) in the Skorohod regime to separate the time component. This strategy appears to be
powerful in the parabolic setting. In the hyperbolic setting, it does not function as good as it for the parabolic
equations, as it is less adoptive to the oscillation behavior of wave operator. 
Another existing practice (e.g., \cite{CDST}) is to perform the Fourier transform
$$
\vert u\vert^{-\alpha_0} =C\int_{\R}e^{i\lambda u}{d\lambda\over\vert\lambda\vert^{1-\alpha_0}}\hskip.2in u\in\R
$$
to $\vert\cdot\vert^{-\alpha_0}$. The use of Fourier transformation has been popular and effective in parabolic setting
(and hyperbolic/Skorohod setting as well).
For the hyperbolic equation in  the 
Stratonovich regime, it creates an annoying and un-controllable singularity.
 Instead, it is proposed in this work to use the Laplace transform
 \begin{align}\label{intro-18}
\vert u\vert^{-\alpha_0} =\Gamma(\alpha_0)^{-1}\int_0^\infty e^{-\lambda \vert u\vert}{d\lambda\over\lambda^{1-\alpha_0}}\hskip.2in u\in\R.
\end{align}

It should be pointed out that the time-dependency in hyperbolic Anderson model is far from fully understood. More specifically,
a deeper connection between the Stratonovich expansion and the Brownian intersection local times is very likely.
The progress should
lead to a fully understand of the intermittency of the system. Despite of its incompleteness, on the other hand,
Theorem \ref{th-2} strongly indicates the most possible pattern of intermittency
\begin{align}\label{intro-19} 
&\lim_{t\to\infty}t^{-{4-\alpha-\alpha_0\over 3-\alpha}}\log\E u^p(t,x)
=(3-\alpha)p^{4-\alpha\over 3-\alpha}\bigg({2(4-\alpha-2\alpha_0)^{4-\alpha-2\alpha_0\over 2}\over
(4-\alpha-\alpha_0)^{4-\alpha-\alpha_0}}\Big({{\cal M}\over 4-\alpha}\Big)^{4-\alpha\over 2}\bigg)^{1\over 3-\alpha}
\end{align}
for $p=1,2,\cdots$. Here we post it as conjecture and leave it for future study.

Here is the organization of the paper. In  next section (Section \ref{M}),  we set a way
to approximate the possibly generalized Gaussian field $\dot{W}(t,x)$,
introduce  the  multiple Stratonovich 
integral   and 
formally express the solution  as  Stratonovich expansion.
In Section \ref{bound}, we establish a universal bound for the moments in the Stratonovich expansion.
Using this bound we install the Sratonovich integrability 
for the  functions $g_n(\cdot, t,x)$ in Section \ref{S} and establish the convergence of the Stratonovich
expansion
(and therefore prove Theorem \ref{th-1}) in Section \ref{E}. In Section \ref{B}, we establish
a link between $\E S_{2n}\big(g_{2n}(\cdot, t,x)\big)$ and the $n$-th moment of a time-randomized
Brownian intersection local time under time exponentiation.  In Section \ref{T}, we prove
a large deviation principle for the time-randomized
Brownian intersection local time. Using the results  from Section \ref{B} and \ref{T}, we prove 
Theorem \ref{th-2} in Section \ref{P}. In the appendix, we conduct some elementary
calculation for the bounds established in Section \ref{bound}.

\section{Stratonovich expansion and approximations} \label{M}
As usual by the Duhamel principle the mathematical definition of the hyperbolic Anderson equation (\ref{intro-1}) will be  the following mild form
\begin{align}\label{M-1}
	u(t,x)=1+\int_{\R^d}\!\int_0^tG(t-s, x-y)u(s,y)W(ds, dy)\,, 
\end{align}
where 
\begin{enumerate}
	\item[(i)] $G(t, x)$ is the fundamental solution  defined by the
	deterministic wave equation
	\begin{align}\label{M-2}
		\left\{\begin{array}{ll}\displaystyle \frac{\partial^2 G}{\partial t^2}(t,x)
			=\Delta G(t,x) \\\\
			\displaystyle G(0, x)=0\hskip.1in\hbox{and}\hskip.1in\frac{\partial G}{\partial t}(0,x)
			=\delta_0(x)\,, \hskip.2in x\in\R^d\,.\end{array}\right.
	\end{align} 
		\item[(ii)] the stochastic integral on the right hand side of (\ref{M-1})
	is interpreted in the sense of Stratonovich  (see discussion below for details).   
\end{enumerate}

\subsection{Green's function}

The fundamental solution $G(t,x)$ associated with  (\ref{M-2}) plays a key role in determining the behavior of
the system (\ref{M-1}). Let us recall some  basic facts. 
Taking Fourier  transform in (\ref{M-2}) we get the expression for the 
fundamental solution
\begin{align}\label{M-11}
	{\cal F}\big(G(t,\cdot)\big)(\xi)=\int_{\R^d}G(t,x)e^{i\xi\cdot x}dx=\frac{\sin(\vert\xi\vert t)}{\vert\xi\vert}\,, \hskip.2in
	(t,\xi)\in\R_+\times\R^d
\end{align}
in its Fourier transform form. In particular,
\begin{align}\label{M-11-1}
\int_{\R^d}G(t,x)dx={\cal F}\big(G(t,\cdot)\big)(0)=t
\end{align}
In the dimensions $d=1,2,3$, the fundamental solution $G(t,x)$ 
itself can be expressed explicitly as
\begin{align}\label{M-12}
	G(t,x)=
	\begin{cases}\displaystyle\frac12 1_{\{\vert x\vert\le t\}}&\hskip.3in d=1\\\\
		\displaystyle \frac{1}{ 2\pi}\frac{ 1_{\{\vert x\vert\le t\}}}{\sqrt{t^2-\vert x\vert^2}}&\hskip.3in d=2\\\\
		\displaystyle\frac{1}{ 4\pi t}\sigma_t(dx) &\hskip.3in d=3\,,  
	\end{cases}
\end{align}
where $\sigma_t(dx)$ is the surface measure on the sphere $\{x\in\R^3;\hskip.05in\vert x\vert=t\}$.
We limit our attention to $d=1,2,3$ in this work because the treatment developed   here
requires $G(t,x)\ge 0$.  A scaling property we frequently use 
(especially in the proof of Theorem \ref{th-2}) is
\begin{align}\label{M-13}
	G(t,x)=t^{-(d-1)}G(1,t^{-1}x)\,, \hskip.2in (t,x)\in\R^+\times\R^d\,. 
\end{align}

\subsection{Smoothed version of $\dot{W}(t,x)$} \label{W}

Generally speaking, the smoothed version of the generalized Gaussian field $\dot{W}(t,x)$ can be
any family $\big\{\dot{W}_{\epsilon,\delta}(t,x);\hskip.1in\epsilon,\delta>0\big\}$ of mean-zero Gaussian
fields on $\R_+\times\R^d$, living in the same
probability space $(\Omega, {\cal A},\P)$ as $W(t,x)$ does, such that for each $\epsilon, \delta>0$, $\dot{W}_{\epsilon,\delta}(t,x)$
is defined point-wise  even with path-continuity (if needed), and of the form of covariance function
\begin{align}\label{W-1}
\Cov\Big(\dot{W}_{\epsilon,\delta}(s,x), \dot{W}_{\epsilon,\delta}(t,y)\Big)
=\gamma_{\delta}^0(s-t)\gamma_\epsilon(x-y)\hskip.2in (s,x), (t,y)\in\R_+\times\R^d
\end{align}
that satisfies
\begin{align}\label{W-2}
&\lim_{\epsilon,\delta\to 0^+}\int_{(\R_+\times\R^d)^2}\gamma_{\delta}^0(s-t)\gamma_\epsilon(x-y)f(s,x)g(t,y)dsdtdxdy\\
&=\int_{(\R_+\times\R^d)^2}\vert s-t\vert^{-\alpha_0}\gamma(x-y)f(s,x)g(t,y)dsdtdxdy\hskip.2in f,g\in{\cal S}(\R_+\times\R^d)
\nonumber
\end{align}
where ${\cal S}(\R_+\times\R^d)$ is the Schwartz space of all functions on $\R_+\times\R^d$ that are infinitely
differentiable and rapidly decay to zero at infinity.

A popular construction of $\dot{W}_{\epsilon,\delta}$ in literature is by convolution:
$$
W_{\epsilon,\delta}(t,x)=\int_{\R^{d+1}}p_0(\delta, t-u)p(\epsilon, x-y)\dot{W}(u,y)dudy
\hskip.2in (t,x)\in\R_+\times\R^d
$$
where $p(t,x)$ is the Brownian semi-group defined on $\R^d$ that is given in (\ref{intro-17}) and $p_0(t,x)$ is the
Brownian semi-group on $\R$.

To promote the use of the Laplace transform proposed in (\ref{intro-18}), we shall construct $W_{\epsilon,\delta}(t,x)$
in a slightly different way such that
\begin{align}\label{W-3}
\Cov\Big(\dot{W}_{\epsilon,\delta}(s,x), \dot{W}_{\epsilon,\delta}(t,y)\Big)
=\bigg(\Gamma(\alpha_0)^{-1}\int_0^{\delta^{-1}}e^{-\lambda\vert s-t\vert}{d\lambda\over\lambda^{1-\alpha_0}}\bigg)
\gamma_{2\epsilon}(x-y)
\end{align}
for $(s,x), (t,y)\in\R_+\times\R^d$, where
$$
\gamma_\epsilon(x)=\int_{\R^d}p(\epsilon, x-y)\gamma(y)dy\hskip.2in x\in\R^d.
$$

If exists (i.e., if they can live in the same probability space with $W(t,x)$), the family 
$\{W_{\epsilon,\delta}(\cdot,\cdot);\hskip.1in\epsilon,\delta>0\}$ meets all requirements
as the smoothed version of $\dot{W}(\cdot,\cdot)$: First, by the expression
\begin{align}\label{W-4}
\Gamma(\alpha_0)^{-1}\int_0^{\delta^{-1}}e^{-\lambda\vert s-t\vert}{d\lambda\over\lambda^{1-\alpha_0}}\
=\Gamma(\alpha_0)^{-1}\E^\kappa\int_0^{\delta^{-1}}e^{i\lambda\kappa( s-t)}{d\lambda\over\lambda^{1-\alpha_0}}\
\end{align}
the function of $s-t$ on the left is non-negative definite and
therefore qualified to be used as covariance
function, where $\kappa$ is a standard 1-dimensional Cauchy random variable.

Second, (\ref{W-2}) holds in light of (\ref{intro-18}).

Third, by the relation
$$
\begin{aligned}
&\E\Big(\dot{W}_{\epsilon,\delta}(s,x)-\dot{W}_{\epsilon,\delta}(t,y)\Big)^2\\
&=2\Gamma(\alpha_0)^{-1}\Bigg\{\bigg(\int_0^{\delta^{-1}}{d\lambda\over\lambda^{1-\alpha_0}}\bigg)\gamma_\epsilon(0)
-\bigg(\int_0^{\delta^{-1}}e^{-\lambda\vert s-t\vert}{d\lambda\over\lambda^{1-\alpha_0}}\bigg)
\gamma_\epsilon(x-y)\Bigg\}\\
&\le C_{\epsilon,\delta}\Big\{\vert s-t\vert +\vert x-y\vert\Big\}
\end{aligned}
$$
and therefore by normality
$$
\E\Big(\dot{W}_{\epsilon,\delta}(s,x)-\dot{W}_{\epsilon,\delta}(t,y)\Big)^{2n}
\le {(2n)!\over 2^nn!}C^n_{\epsilon,\delta}\Big\{\vert s-t\vert +\vert x-y\vert\Big\}^n
$$
for any integer $n\ge 1$.
A standard use of Kolmogorov continuity theorem (Theorem D.7, p.313, \cite{Chen-1}), the Gaussian
field $W_{\epsilon,\delta}(t,x)$ has a continuous modification on $\R_+\times\R^d$.

To have the family $\big\{\dot{W}_{\epsilon,\delta};\hskip.1in\epsilon,\delta>0\big\}$ live in the same
probability space. We start with the following simple observation: Given $0<T_1<\cdots <T_n<\cdots$, let
$\Delta_k\dot{W}(t,x)$ ($k=1, 2,\cdots $) be independent mean-zero (possibly generalized)
Gaussian fields on $\R_+\times\R^d$
with the covariance functions
$$
\Cov\Big(\Delta_k\dot{W}(s,x), \Delta_k\dot{W}(t,y)\Big)=\bigg(\Gamma(\alpha_0)^{-1}\int_{T_{k-1}}^{T_k}e^{-\lambda\vert s-t\vert}{d\lambda\over\lambda^{1-\alpha_0}}\bigg)
\gamma(x-y)
$$
for $(s,x), (t,y)\in\R_+\times\R^d$, where we follow the convention $T_0=0$. Set
$$
\dot{W}_{T_n}(t,x)=\sum_{k=1}^n\Delta_k\dot{W}(t,x)\hskip.2in n=1,2,\cdots.
$$
It is straightforward to see that
$$
\Cov\Big(\dot{W}_{T_n}(s,x), \dot{W}_{T_n}(t,y)\Big)=\bigg(\Gamma(\alpha_0)^{-1}\int_0^{T_n}e^{-\lambda\vert s-t\vert}{d\lambda\over\lambda^{1-\alpha_0}}\bigg)
\gamma(x-y).
$$
Without changing its distribution, one can re-define the Gaussian field $W(t,x)$ as
$$
\dot{W}(t,x)=\sum_{k=1}^\infty\Delta_k\dot{W}(t,x)
$$
for any monotonic sequence $\{T_n\}$ satisfying $T_n\to\infty$ ($n\to\infty$).

By Kolmogorov consistence extension, we can extend $\{\dot{W}_{T_n}(\cdot, \cdot);\hskip.1in n=1,2,\cdots\}$ to
the bigger family $\{\dot{W}_T(\cdot, \cdot);\hskip.1in T>0\}$. Then we adopt the new notation that replaces 
$\dot{W}_{\delta^{-1}}(\cdot, \cdot)$ by $\dot{W}_{\delta}(\cdot, \cdot)$. Finally we define
$$
\dot{W}_{\epsilon,\delta}(t,x)=\int_{\R^d}p(\epsilon, x-y)\dot{W}_\delta(t, y)dy
$$
which satisfies (\ref{W-3}). More generally
\begin{align}\label{W-5}
\Cov\Big(\dot{W}_{\epsilon,\delta}(s,x), \dot{W}_{\tilde{\epsilon},\tilde{\delta}}(t,y)\Big)
=\bigg(\Gamma(\alpha_0)^{-1}\int_0^{\delta^{-1}\wedge\tilde{\delta}^{-1}}e^{-\lambda\vert s-t\vert}{d\lambda\over\lambda^{1-\alpha_0}}\bigg)
\gamma_{\epsilon+\tilde{\epsilon}}(x-y)
\end{align}
for any $\epsilon, \tilde{\epsilon}, \delta, \tilde{\delta}>0$.

\subsection{Stratonovich integral}\label{D}

Given a random field $\Psi(t, x)$ ($(t, x)\in\R_+\times\R^d$) such that
$$
\int_{\R_+\times\R^d}\Psi(t, x)\dot{W}_{\epsilon,\delta}(t, x)dtdx\in{\cal L}^2(\Omega, {\cal F},\P)\hskip.2in \forall\varepsilon>0\,, 
$$
where $\dot{W}_{\epsilon,\delta}(t,x)$ is constructed in Section \ref{W},
define  the Stratonovich integral of $\Psi(t, x)$ as 
\begin{equation}
	\int_{\R_+\times\R^d}\Psi(t, x)W(dt, dx)\buildrel \Delta\over
	=\lim_{\epsilon,\delta\to 0^+}\int_{\R_+\times\R^d}\Psi(t, x)
	\dot{W}_{\epsilon,\delta}(t,x)dtdx
	\label{e.def_integral} 
\end{equation} 
whenever  such limit exists in ${\cal L}^2(\Omega, {\cal F},\P)$. We can also use the convergence in probability in above definition. But as in most works  on SPDE, ${\cal L}^2(\Omega, {\cal F},\P)$ norm is   easier to deal with so that we choose the ${\cal L}^2(\Omega, {\cal F},\P)$ convergence 
throughout  this work.  Notice that   this definition 
implicates  that $u(t,x) $  as  a
solution to (\ref{M-1}) is in $  {\cal L}^2(\Omega, {\cal F},\P)$   for all $(t,x)\in\R^+\times\R^d$.  After  defining the Stratonovich integral, we can give the following definition about  the solution.
\begin{definition}\label{d.mild_solution} 
	A random field $\{u(t,x)\,, t\ge 0\,, x\in \R^d\}$  is called a mild solution 
	to \eqref{intro-1} if  for any $(t,x)\R_+\times\R^d$, the random field 
	$$\Psi(s,y)\equiv G(t-s, x-y)u(s,y)1_{[0,t]}(s)
	$$  
	is Stratonovich integrable and if  \eqref{M-1} is satisfied. 
\end{definition}

To prove Theorem \ref{th-1}, we shall use the Stratonovich 
expansion (see \cite{humeyer}, \cite{hubook} and
references therein for the multiple
Stratonovich integrals). 
According to the algorithm in \cite{DMT},
formally  iterating (\ref{M-1})
infinitely many  times we   have  heuristically a solution candidate 
\begin{align}\label{M-3}
	u(t,x)=\sum_{n=0}^\infty S_n\big(g_n(\cdot,t,x)\big)
\end{align}
with $S_0\big(g_0(\cdot, t,x)\big)=   1$. Here is how the notation
$S_n\big(g_n(\cdot,t,x)\big)$ is justified:
The iteration procedure creates the recurrent relation
\begin{equation}
	S_{n+1}\big(g_{n+1}(\cdot, t,x)\big)= \int_{\R^d}\!\int_0^t
	G(t-s, x-y)S_n\big(g_n(\cdot, s,y)\big)
	{W}(ds,dy) \,. 
	\label{e.2.9}
\end{equation} 
Iterating this relation formally we have
\begin{align}\label{M-4}
	&S_n\big(g_n(\cdot,t,x)\big)\\
	&=\int_{(\R^d)^n}\int_{[0,t]_<^n}
	G(t-s_n, x-x_n)\cdots G(s_2-s_1, x_2-x_1)W(ds_1, dx_1)\cdots W(ds_n, dx_n)\nonumber\\
	&=\int_{(\R_+\times\R^d)^n}g_n(s_1,\cdots, s_n, x_1,\cdots, x_n, t, x)W(ds_1,dx_1)\cdots W(ds_n, dx_n)
		\hskip.2in \hbox{(say)}\,, \nonumber                               
\end{align}
where $[0,t]_<^n:=\left\{ (s_1, \cdots, s_n)\in [0, t]^n \ \  \hbox{satisfies } 
  \ \  0<s_1<s_2<\cdots<s_n<t\right\}$,
and the conventions $x_{n+1}=x$ and $s_{n+1}=t$ are adopted.
Thus, the notation ``$S_n\big(g_n(\cdot,t,x)\big)$'' is reasonably introduced
for
an  $n$-multiple Gaussian integral of the integrand
\begin{align}\label{M-14}
	&g_n(s_1,\cdots, s_n, x_1,\cdots, x_n, t,x)\\
	&=\Big(G(t-s_n, x-x_n)\cdots G(s_2-s_1, x_2-x_1)\Big)
	1_{[0,t]_<^n}(s_1,\cdots, s_n)\nonumber
\end{align}
($n=1,2,\cdots$). In Section \ref{S}, the Stratonovich
integrability of $g_n(\cdot, t,x)$
shall be rigorously established
in Theorem \ref{th-4}.

The above construction indicates  that the  existence of  the system (\ref{M-1}) 
can be implied by  the convergence
of the random series defined by  (\ref{M-3}) in an  appropriate form. 
This will be justified rigorously in Section \ref{E} as part of the proof
of Theorem \ref{th-1}.

\begin{definition}\label{def-2.2}
	Let $f:(\R_+\times\R^d)^n\to \R $ be measurable
	such that   for every $\epsilon, \delta>0$ 
	$$
	\int_{(\R_+\times\R^d)^{n}}f(s_1,\cdots, s_n, x_1\cdots, x_{n})\bigg(\prod_{k=1}^n\dot{W}_{\epsilon, \delta}(s_k, x_k)\bigg)
	ds_1\cdots ds_ndx_1\cdots dx_n\in   {\cal L}^2(\Omega, {\cal F}, \P)\,.
	$$
	Then we define the $n$-multiple Stratonovich integral of $f$ as
	\begin{align}\label{M-6}
		S_n(f):=&   \int_{(\R_+\times\R^d)^{n}}  f(s_1,\cdots, s_n, x_1\cdots, x_{n})W(ds_1, dx_1)\cdots W(ds_n, dx_n)\\
	 =&\lim_{\epsilon,\delta\to 0^+}
		\int_{(\R_+\times\R^d)^{n}}f(s_1, \cdots, s_n, x_1\cdots, x_{n})\
		\bigg(\prod_{k=1}^n\dot{W}_{\epsilon,\delta}(s_k, x_k)\bigg)
		ds_1\cdots ds_ndx_1\cdots dx_n \nonumber  
	\end{align}
	whenever the limit exists $  {\cal L}^2(\Omega, {\cal F}, \P)$. 
	
\end{definition}
\begin{remark}Along with the set-up of our model, the Stratonovich integrand $f$ is
	given as a generalized function in the dimension three ($d=3$). Indeed  (\cite{pipiras}),  Definition \ref{def-2.2} can be extended
	to the setting of  generalized functions $f$.  A detail is provided near the end of this section for the construction
	needed in $d=3$.
\end{remark} 

The following lemma provides a convenient test
of Stratonovich integrability that we shall use   in this work. 

\begin{lemma}\label{L-0} The $n$-multiple Stratonovich integral $S_n(f)$ 
	exists
	if and only if the limit
	$$
	\begin{aligned}
		&\lim_{\stackrel{(\epsilon,\delta)\to 0^+}{\scriptstyle(\epsilon',\delta')\to 0^+}}
		\E\bigg\{\int_{(\R^d)^n}f(s_1,\cdots, s_n, x_1,\cdots x_n)\bigg(\prod_{k=1}^n\dot{W}_{\epsilon,\delta}(s_k, x_k)\bigg)ds_1\cdots ds_ndx_1\cdots dx_n\bigg\}\\
		&\times\bigg\{\int_{(\R_+\times\R^d)^n}f(s_1,\cdots, ds_n, x_1,\cdots x_n)\bigg(\prod_{k=1}^n
		\dot{W}_{\epsilon',\delta'}(s_k, x_k)\bigg)
		ds_1\cdots ds_ndx_1\cdots dx_n\bigg\}
	\end{aligned}
	$$
	exists
\end{lemma}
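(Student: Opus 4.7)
This is essentially the standard Cauchy criterion for $\mathcal{L}^2$-convergence, packaged for the net indexed by $(\epsilon,\delta)\to 0^+$. Write
$$I_{\epsilon,\delta}:=\int_{(\R_+\times\R^d)^n}f(s_1,\ldots,s_n,x_1,\ldots,x_n)\prod_{k=1}^n\dot{W}_{\epsilon,\delta}(s_k,x_k)\,ds_1\cdots ds_n\,dx_1\cdots dx_n,$$
which by hypothesis lies in $\mathcal{L}^2(\Omega,\mathcal{F},\P)$ for every $\epsilon,\delta>0$. The whole argument is powered by the Hilbert space identity
$$\bigl\|I_{\epsilon,\delta}-I_{\epsilon',\delta'}\bigr\|_2^2=\bigl\|I_{\epsilon,\delta}\bigr\|_2^2+\bigl\|I_{\epsilon',\delta'}\bigr\|_2^2-2\,\E\bigl[I_{\epsilon,\delta}\,I_{\epsilon',\delta'}\bigr].$$

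For the forward implication, assume $I_{\epsilon,\delta}\to S_n(f)$ in $\mathcal{L}^2$. Using the splitting
$I_{\epsilon,\delta}I_{\epsilon',\delta'}-S_n(f)^2=(I_{\epsilon,\delta}-S_n(f))I_{\epsilon',\delta'}+S_n(f)(I_{\epsilon',\delta'}-S_n(f))$
together with Cauchy--Schwarz, the double expectation appearing in the statement converges to $\E[S_n(f)^2]$, so the desired limit exists.

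For the converse, which is the substantive direction, assume the double limit exists and call it $L$. Specializing to $(\epsilon',\delta')=(\epsilon,\delta)$ yields $\|I_{\epsilon,\delta}\|_2^2\to L$. Plugging this back into the polarization identity gives $\|I_{\epsilon,\delta}-I_{\epsilon',\delta'}\|_2^2\to L+L-2L=0$ as $(\epsilon,\delta)$ and $(\epsilon',\delta')$ independently tend to $0^+$. Hence $\{I_{\epsilon,\delta}\}$ is a Cauchy net in $\mathcal{L}^2(\Omega,\mathcal{F},\P)$, and by completeness admits an $\mathcal{L}^2$-limit, which by Definition \ref{def-2.2} is exactly $S_n(f)$.

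I do not anticipate any real obstacle beyond a small bookkeeping point: we are dealing with a net rather than a sequence. However completeness of $\mathcal{L}^2$ handles this routinely --- one may extract a sequence $(\epsilon_n,\delta_n)\to 0^+$ along which the Cauchy condition above forces $\mathcal{L}^2$-convergence to some random variable $S_n(f)$, and then the full net converges to the same $S_n(f)$ because, given any other choice $(\epsilon,\delta)\to 0^+$, the Cauchy estimate bounds $\|I_{\epsilon,\delta}-I_{\epsilon_n,\delta_n}\|_2$ uniformly in $n$ large and $(\epsilon,\delta)$ small. So the proof is essentially a two-line application of the parallelogram law once the notation is set up.
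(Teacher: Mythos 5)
Your argument is correct and is essentially the paper's own proof: the paper simply observes that existence of the $\mathcal{L}^2$-limit is equivalent to the family ${\cal Z}_{\epsilon,\delta}$ being Cauchy in $\mathcal{L}^2$, which is exactly the polarization computation you carry out in detail. Nothing further is needed.
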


\begin{proof} The existence of the limit in (\ref{M-6}) is
	another way to say that the family
	$$
	{\cal Z}_{\epsilon,\delta}
	=\int_{(\R_+\times\R^d)^n}f(s_1,\cdots, s_n, x_1,\cdots x_n)\bigg(\prod_{k=1}^n\dot{W}_{\epsilon, \delta}(s_k, x_k)\bigg)ds_1\cdots ds_ndx_1\cdots dx_n
	$$
	is a Cauchy sequence in ${\cal L}^2(\Omega, {\cal F},\P)$ as $\epsilon, \delta\to 0^+$, 
	which is equivalent to the lemma.
\end{proof}

Definition \ref{def-2.2} can be extended to a random field
$f(s_1,\cdots, s_n, x_1,\cdots, x_n)$ in an obvious way.  Most of the time in this paper,
however,
we deal with a deterministic integrand and demand some effective ways to compute the expectation of   multiple Stratonovich
integral  of deterministic integrands. 

\begin{lemma}\label{L-1} In the setting of deterministic integrand, the ${\cal L}^2$-convergence in Definition \ref{def-2.2}
can be replaced ${\cal L}^p$-convergence for any $p\ge 2$.
\end{lemma}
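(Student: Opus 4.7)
The plan is to invoke the hypercontractivity of the Ornstein--Uhlenbeck semigroup (Nelson's inequality), which yields the equivalence of all ${\cal L}^p$-norms on a fixed direct sum of Wiener chaoses. One direction is immediate: since $\P$ is a probability measure, Jensen's inequality gives $\|X\|_{{\cal L}^2}\le \|X\|_{{\cal L}^p}$ for every $p\ge 2$, so ${\cal L}^p$-convergence of the approximants in (\ref{M-6}) always implies ${\cal L}^2$-convergence. The substance of the lemma is the reverse direction, and that is where I would focus.

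The key observation is that because the integrand $f$ is deterministic, and because $\dot{W}_{\epsilon,\delta}(t,x)$ is itself a centered Gaussian field living in the Gaussian Hilbert space ${\cal H}$ generated by $W$ (as is explicit from the construction in Section \ref{W}), for every fixed $\epsilon,\delta>0$ the random variable
$$
{\cal Z}_{\epsilon,\delta}=\int_{(\R_+\times\R^d)^n}f(s_1,\ldots,s_n,x_1,\ldots,x_n)\prod_{k=1}^n\dot{W}_{\epsilon,\delta}(s_k,x_k)\,ds_1\cdots ds_n dx_1\cdots dx_n
$$
belongs to the direct sum $\bigoplus_{k=0}^n {\cal H}_k$ of Wiener chaoses of order at most $n$. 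I would justify this by expanding the product of Gaussians pointwise via Wick's formula into Hermite polynomials of orders $n, n-2, \ldots$, and then noting that integration against the deterministic kernel $f$ preserves membership in $\bigoplus_{k=0}^n {\cal H}_k$ since this subspace is ${\cal L}^2$-closed. The same reasoning puts every difference ${\cal Z}_{\epsilon,\delta}-{\cal Z}_{\epsilon',\delta'}$ into the same finite sum of chaoses.

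Nelson's hypercontractivity then supplies a constant $C_{n,p}$ depending only on $n$ and $p$ such that, for every $X\in\bigoplus_{k=0}^n {\cal H}_k$ and every $p\ge 2$, $\|X\|_{{\cal L}^p}\le C_{n,p}\|X\|_{{\cal L}^2}$. Applied to $X={\cal Z}_{\epsilon,\delta}-{\cal Z}_{\epsilon',\delta'}$, this immediately upgrades an ${\cal L}^2$-Cauchy family as $(\epsilon,\delta)\to 0^+$ to an ${\cal L}^p$-Cauchy family for every $p\ge 2$, yielding the claimed equivalence. The only point I would treat with a little care is making the chaos-membership of ${\cal Z}_{\epsilon,\delta}$ fully rigorous; the cleanest way is to approximate the Lebesgue integral by Riemann sums of Wick products and to pass to the ${\cal L}^2$-limit inside the closed subspace $\bigoplus_{k=0}^n {\cal H}_k$. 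Beyond that, the argument is a direct one-line application of hypercontractivity, so no deeper obstacle is anticipated.
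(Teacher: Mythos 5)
Your argument is correct and is essentially the content of the paper's proof: the paper simply reduces the time-dependent field to a time-independent one by absorbing the time variable into the space variables and then cites Theorem 6.2 and Remark 6.3 of \cite{CH}, whose substance is exactly the hypercontractivity bound $\|X\|_{{\cal L}^p}\le C_{n,p}\|X\|_{{\cal L}^2}$ on $\bigoplus_{k=0}^n{\cal H}_k$ that you spell out. Your observation that each ${\cal Z}_{\epsilon,\delta}$ (and each difference of two of them) lies in the chaoses of order at most $n$ because $f$ is deterministic, together with Jensen for the easy direction, is precisely the right mechanism.
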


\proof This has been proved in the setting of time-independent noise (Theorem 6.2 and Remark 6.3, \cite{CH}). In
the context of the general integrand, one can identify  $f(s_1,\cdots, s_n, x_1,\cdots x_n)$ with $f(x_1,\cdots, x_n)$ and the time-dependent
Guassian field $\dot{W}(t,x)$ with the time-independent Gaussian field $\dot{W}(x)$ by viewing the time variable as the extra
dimension of the space variables. Then Theorem 6.2 and Remark 6.3, \cite{CH} applies to our setting. \qed

This lemma brings some convenience in computation. An example of such is Fubini theorem: Given
the integers $l_1,\cdots, l_m\ge 1$ and the $l_j$-multiple time-space variate functions
$f_j$ ($1\le j\le m$), the Stratonovich integrabilities of
$f_1,\cdots, f_m$
implies the Stratonovich integrability of $f_1\otimes\cdots \otimes f_m$
and
\begin{align}\label{M-16}
	S_{l_1+\cdots +l_m}\big(f_1\otimes\cdots \otimes f_m\big)
	=\prod_{j=1}^mS_{l_j}(f_j)\,. 
\end{align}

Let us
recall  an identity \cite[p.201, Lemma 5.2.6]{MR} known as Wick's formula
which states that
\begin{align}\label{M-8}
	\left\{\begin{array}{ll}\displaystyle
		\E\prod_{k=1}^{2n}g_k=\sum_{{\cal D}\in\Pi_n}\prod_{(j,k)\in{\cal D}}\E g_jg_k\\\\
		\displaystyle\E\prod_{k=1}^{2n-1}g_k=0\,, \end{array}\right.
\end{align}
where $(g_1,\cdots,g_{2n})$ is a
mean zero normal vector,
and $\Pi_n$ is the set  of all pair partitions  of $\{1,2,\cdots, 2n\}$. As a side remark,
$\#(\Pi_n)={(2n)!\over 2^n n!}$.
Applying  (\ref{M-8})  to $g_k=\dot{W}_{\epsilon, \delta}(s_k, x_k)$ in the case of deterministic integrand $f$,
and taking the $(\epsilon, \delta)$-limit, we have
\begin{align}\label{M-9}
\E S_{2n-1}(f) =0
\end{align}
and
\begin{align}\label{M-10}
	\E S_{2n}(f)&=\sum_{{\cal D}\in\Pi_n}
	\int_{(\R_+\times\R^d)^{2n}}f(s_1,\cdots, s_{2n}, x_1,\cdots, x_{2n})\\
	&\times\bigg(\prod_{(j,k)\in {\cal D}}\vert s_j-s_k\vert^{-\alpha_0}\gamma(x_j-x_k)\bigg)
	ds_1\cdots ds_{2n}dx_1\cdots dx_{2n}\nonumber
\end{align}
under the Stratonovich integrability.
In particular, the expectation of a $(2n)$-multiple Stratonovich
integral is non-negative if the integrand is non-negative. Take
$f=g_{2n}(\cdot, t, 0)$ for example. In the case of integrability
$$
\begin{aligned}
\E S_{2n}\big(g_{2n}(\cdot, t, 0)\big)
&=\sum_{{\cal D}\in\Pi_n}\int_{[0,t]_<^{2n}}\!\int_{(\R^d)^{2n}}
\Big(G(t-s_{2n}, -x_{2n})\cdots G(s_2-s_1, x_2-x_1)\Big)\cr
&\times\bigg(\prod_{(j,k)\in {\cal D}}\vert s_j-s_k\vert^{-\alpha_0}\gamma(x_j-x_k)\bigg)
 ds_1\cdots ds_{2n}dx_1\cdots dx_{2n}.
\end{aligned}
$$
With the substitutions $s_k\mapsto t-s_{2n-k+1}$ and $x_k\mapsto -x_{2n-k+1}$.
($k=1,2,\cdots, 2n$) performed on the right hand side,
\begin{align}\label{M-10-20}
&\E S_{2n}\big(g_{2n}(\cdot, t, 0)\big)
=\sum_{{\cal D}\in\Pi_n}\
\int_{[0,t]_<^{2n}}\!\int_{(\R^d)^{2n}}\bigg(\prod_{l=1}^{2n}G(s_{l}-s_{l-1}, x_{l}-x_{l-1})\bigg)\\
&\times\bigg(\prod_{(j,k)\in {\cal D}}\vert s_{2n-j+1}-s_{2n-k+1}\vert^{-\alpha_0}\gamma(x_{2n-j+1}-x_{2n-k+1})\bigg)
 ds_1\cdots ds_{2n}dx_1\cdots dx_{2n}\nonumber\\
 &=\sum_{{\cal D}\in\Pi_n}\int_{[0,t]_<^{2n}}\!\int_{(\R^d)^{2n}}
	\bigg(\prod_{l=1}^{2n}G(s_l-s_{l-1}, x_l-x_{l-1})\bigg)\nonumber\\
	&\times\bigg(\prod_{(j,k)\in {\cal D}}\vert s_j-s_k\vert^{-\alpha_0}\gamma(x_j-x_k)\bigg)
	ds_1\cdots ds_{2n}dx_1\cdots dx_{2n}\nonumber
\end{align}
where we follow the convention on the right hand side that $s_0=0$ and $x_0=0$.

Since  the condition (\ref{intro-5})
encompasses the cases where the covariance function $\gamma(\cdot)$ exists only as a
generalized function (e.g., $\gamma(\cdot)=\delta_0(\cdot)$ in $d=1$), the meaning of the multiple integrals on
the right hand side of (\ref{M-10}) needs to be clarified.
Indeed, by (\ref{W-3}) and (\ref{M-8})
\begin{align}\label{M-30}
	&\E\int_{(\R_+\times\R^d)^{2n}} f(s_1,\cdots, s_{2n}, x_1,\cdots, x_{2n})\bigg(
	\prod_{k=1} ^{2n} \dot{W}_{\epsilon,\delta}
	(s_k, x_k)\bigg)ds_1\cdots ds_{2n}dx_1\cdots dx_{2n}\\
	&=\sum_{{\cal D}\in\Pi_n} \int_{(\R_+\times\R^d)^{2n}}f(s_1,\cdots, s_{2n}, x_1,\cdots, x_{2n})
	\bigg(\prod_{(j,k)\in {\cal D}}\gamma_\delta^0(s_j-s_k)\gamma_{2\epsilon}(x_j-x_k)\bigg)\nonumber\\
	&\times ds_1\cdots ds_{2n}dx_1\cdots dx_{2n}\nonumber
\end{align}
where
\begin{align}\label{M-17}
\gamma_\delta^0(u)
=\Gamma(\alpha_0)^{-1}\int_0^{\delta^{-1}}e^{-\lambda\vert u\vert}{d\lambda\over\lambda^{1-\alpha_0}}
\hskip.1in\hbox{and}\hskip.1in
\gamma_\epsilon(x)=\int_{\R^d}\gamma(y)p_\epsilon(x-y)dy.
\end{align}
Inspired by (\ref{M-6}) and in light of (\ref{M-30}), we therefore define 
\begin{align}\label{M-15}
	&\int_{(\R_+\times\R^d)^{2n}}f(s_1,\cdots, s_{2n}, x_1,\cdots, x_{2n})
	\bigg(\prod_{(j,k)\in {\cal D}}\vert s_j-s_k\vert^{-\alpha_0}\gamma(x_j-x_k)\bigg)\\
	&\times ds_1\cdots ds_{2n}dx_1\cdots dx_{2n}\nonumber\\
	&\buildrel\Delta\over =\lim_{\epsilon,\delta\to 0^+}\int_{(\R_+\times\R^d)^{2n}}f(s_1,\cdots, s_{2n}, x_1,\cdots, x_{2n})
	\bigg(\prod_{(j,k)\in {\cal D}}\gamma_\delta^0(s_j-s_k)\gamma_{\epsilon}(x_j-x_k)\bigg)\nonumber\\
	&\times ds_1\cdots ds_{2n}dx_1\cdots dx_{2n}\nonumber
\end{align}
whenever the limit exists.

To end this section   we take the  chance to address an inconvenient fact from (\ref{M-12}) where $G(t,x)$ is
defined as a measure rather than a function in 3-dimensional Euclidean space. In this case,  the integrand
$g_n(\cdot, t, x)$ introduced in (\ref{M-14}) exists only as a generalized function. For any $\eta>0$,
set 
$$
G_\eta(t,x)=\int_{\R^3}p_3(\eta, x-y)G(t,y)dy
$$
where $p_3(t,x)$ is the Brownian semi-group on $\R^3$. Write
$$
\begin{aligned}
&g_{n,\eta}(s_1,\cdots, s_n, x_1,\cdots, x_n, t, x)\\
&=\Big(G_\eta(t-s_n, x-x_n)\cdots G_\eta(s_2-s_1, x_2-x_1)\Big)
1_{[0, t]<^n}(s_1,\cdots, s_n).
\end{aligned}
$$
The $n$-multiple Stratonovich
integral is defined in the following two steps: 
$$
\begin{aligned}
&\int_{(\R_+\times\R^d)^n} g_{n}(s_1,\cdots, s_n, x_1,\cdots, x_n, t, x)\bigg(\prod_{k=1}^n
\dot{W}_{\epsilon,\delta}(s_k, x_k)\bigg)ds_1\cdots ds_n dx_1\cdots dx_n\\
&:=\lim_{\eta\to 0^+}\int_{(\R_+\times\R^d)^n} g_{n,\eta}(s_1,\cdots, s_n, x_1,\cdots, x_n, t, x)\bigg(\prod_{k=1}^n
\dot{W}_{\epsilon,\delta}(s_k, x_k)\bigg)ds_1\cdots ds_n dx_1\cdots dx_n
\end{aligned}
$$
for any $\epsilon, \delta>0$, and
$$
\begin{aligned}
&S\big(g_n(\cdot, t, x)\big):=\int_{(\R_+\times\R^d)^n}g_{n}(s_1,\cdots, s_n, x_1,\cdots, x_n, t, x)
W(ds_1,x_1)\cdots W(s_n, x_n)\\
&:=\lim_{\epsilon,\delta\to 0^+}
\int_{(\R_+\times\R^d)^n} g_{n}(s_1,\cdots, s_n, x_1,\cdots, x_n, t, x)\bigg(\prod_{k=1}^n
\dot{W}_{\epsilon,\delta}(s_k, x_k)\bigg)ds_1\cdots ds_n dx_1\cdots dx_n
\end{aligned}
$$
whenever both limits exist in ${\cal L}^2(\Omega, {\cal A}, \P)$.
It will be verified  later that   the limits do exist
under the assumption (\ref{intro-5}). Therefore, it can be treated  together with dimensions $d=1,2$.

\section{Stratonovich  moment bound}\label{bound}

Let the pair partition ${\cal D}\in \Pi_n$ be fixed. Let $\mu_{j,k}(d\xi)$ ($(j,k)\in{\cal D}$) be the finite measures on $\R^d$ such that
\begin{align}\label{D-1}
\gamma_{j,k}(x)\equiv\int_{\R^d}e^{i\xi\cdot x}\mu_{j,k}(d\xi)\ge 0\hskip.2in x\in\R^d.
\end{align}
Since $\mu_{j,k}$ are finite, $\gamma_{j,k}(x)$ are defined point-wise.

Let $A_{j,k}\subset \R_+$ ($(j,k)\in{\cal D}$) be measurable and define
\begin{align}\label{D-2}
\gamma^0_{j,k}(u)=\Gamma(\alpha_0)^{-1}\int_{A_{j,k}}e^{-\lambda \vert u\vert}{d\lambda\over \lambda^{1-\alpha_0}}\hskip.2in
u\in\R.
\end{align}
Clearly, $\gamma_{j,k}^0(\cdot)$ is non-negative and non-negative definite (see the trick played in (\ref{W-4}) for the second claim).

For any non-negative function ${\cal G}(t,x)$ on $\R_+\times\R^d$,
 we introduce the notations
\begin{align}\label{D-3}
\|{\cal G}\|^{(0)}\equiv\int_{\R_+\times\R^d}{\cal G}(t,x)dtdx\hskip.2in (\hbox{It is $(j,k)$-index free}!),
\end{align}
\begin{align}\label{D-4}
\|{\cal G}\|_{j,k}^{(1)}\equiv\Gamma(\alpha_0)^{-1}\int_{A_{j,k}\times\R^d}
\bigg\vert \int_{\R_+\times\R^d}e^{-\lambda t+i\xi\cdot x}{\cal G}(t,x)dtdx
\bigg\vert{d\lambda\over \lambda^{1-\alpha_0}}\mu_{j,k}(d\xi),
\end{align}
\begin{align}\label{D-5}
\|{\cal G}\|_{j,k}^{(2)}\equiv\bigg(\int_{(\R_+\times\R^d)^2}\gamma_{j,k}^0(s-t)
\gamma_{j,k}(x-y)
{\cal G}(s,x){\cal G}(t,x)dsdxdtdy\bigg)^{1/2}.
\end{align}

The aim of this section is to provide a meaningful bound for the multiple integral
$$
\begin{aligned}
&\int_{(\R_+)_<^{n_1}\times(\R_+)_<^{n_2}}ds_1\cdots ds_{2n}\int_{(\R^d)^{n_1}\times(\R^d)^{n_2}} dx_1\cdots dx_{2n}\\
&\times\bigg(\prod_{\rho=1}^2\prod_{l\in I_\rho}G_l(s_{l}-s_{l-1}, x_{l}-x_{l-1})\bigg)
\prod_{(j,k)\in{\cal D}}\gamma^0_{j,k}(s_j-s_k)\gamma_{j,k}(x_j-x_k)
\end{aligned}
$$
with the non-negative functions $G_l(t,x)$ ($l=1,2\cdots, 2n$) such that
$\|G_l\|^{(0)}, \|G\|_{j,k}^{(1)}, \|G\|_{j,k}^{(2)}<\infty$, where $n_1$ and $n_2$ are non-negative integers with
$2n=n_1+n_2$, $I_1=\{1,\cdots, n_1\}$ and $I_2=\{n_1+1,\cdot, 2n\}$, and where  the following conventions
are adopted: $s_0=0$, $x_0=0$,  and $s_{n_1}=0$, $x_{n_1}=0$ in the expression
$G_{n_1+1}(s_{n_1+1}-s_{n_1}, x_{n_1+1}-x_{n_1})$.

\begin{theorem}\label{th-3} There is
a partition $\{Q_0, Q_1, Q_2\}$ of $\{1,\cdots, 2n\}$, possibly depending on $n_1, n_2$ and ${\cal D}$,
such that $\#(Q_2)$ is even and $\#(Q_0)=\#(Q_1)$ and that
\begin{align}\label{D-6}
&\int_{(\R_+)_<^{n_1}\times(\R_+)_<^{n_2}}
ds_1\cdots ds_{2n}\int_{(\R^d)^{n_1}\times(\R^d)^{n_2}} dx_1\cdots dx_{2n}\\
&\times\bigg(\prod_{\rho=1}^2\prod_{l\in I_\rho}G_l(s_l-s_{l-1}, x_l-x_{l-1})\bigg)
\prod_{(j,k)\in{\cal D}}\gamma^0_{j,k}(s_j-s_k)\gamma_{j,k}(x_j-x_k)\nonumber\\
&\le\bigg(\prod_{l\in Q_0}\|G_l\|^{(0)}\bigg)\bigg(\prod_{l\in Q_1}2\|G_l\|_{(\cdot,\cdot)}^{(1)}\bigg)
\bigg(\prod_{l\in Q_2}\|G_l\|_{(\cdot,\cdot)}^{(2)}\bigg)\nonumber
\end{align}
where the subscripts $\{(j,k)\in {\cal D}\}$ are distributed  between $Q_1$-product and $Q_2$-product in the following way:
Each $(j,k)$ in $Q_1$ appears exact once (so the number of $(j,k)$ in $Q_1$-product is equal to $\#(Q_1)$, while 
each index $(j,k)$ in $Q_2$-product appears exactly twice (so the number of $(j,k)$ in  $Q_2$
is equal to $2^{-1}\#(Q_2)$). 
\end{theorem}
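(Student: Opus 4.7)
The plan is to use the Laplace--Fourier representations in (\ref{D-1}) and (\ref{D-2}) to convert the pair covariance factors into tensor products of exponentials, classify each pair as living within a single chain or straddling the two, and then apply a chain-integration argument in the first case and a Cauchy--Schwarz argument in the second.

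First I would apply, for every pair $(j,k)\in{\cal D}$, the representation
$$
\gamma^0_{j,k}(s_j-s_k)\gamma_{j,k}(x_j-x_k)=\Gamma(\alpha_0)^{-1}\int_{A_{j,k}\times\R^d}e^{-\lambda|s_j-s_k|+i\xi\cdot(x_j-x_k)}\lambda^{\alpha_0-1}d\lambda\,\mu_{j,k}(d\xi),
$$
pull the outer $(\lambda_{j,k},\xi_{j,k})$-integrations to the outside, and pass to increment variables $\tau_l=s_l-s_{l-1}$, $z_l=x_l-x_{l-1}$ along each chain $I_\rho$. Within a single chain the ordering $s_1<s_2<\cdots$ forces $|s_j-s_k|=\sum_{l=\min(j,k)+1}^{\max(j,k)}\tau_l$, so for $j<k$ in the same chain the exponential in the pair factor splits cleanly as $\prod_{l=j+1}^k e^{-\lambda\tau_l-i\xi\cdot z_l}$, attaching one factor to each $G_l$ with $l$ strictly between $j$ and $k$.

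Next I would classify each pair $(j,k)\in{\cal D}$: call it type~1 if both indices lie in the same chain, and type~2 if they straddle the two chains. For a type-2 pair I would invoke the Cauchy--Schwarz inequality for the non-negative definite kernel $\gamma^0_{j,k}(s-t)\gamma_{j,k}(x-y)$ to decouple the two chains, producing one factor of $\|G\|^{(2)}_{j,k}$ on each side and contributing two indices to $Q_2$. For a type-1 pair, using the chain factorization above, I would select one intermediate index $l^{\ast}$ whose $G_{l^{\ast}}$ retains the Laplace--Fourier exponential: after integration against the outer $\lambda^{\alpha_0-1}d\lambda\,\mu_{j,k}(d\xi)$ this produces a $\|G_{l^{\ast}}\|^{(1)}_{j,k}$ factor (placing $l^{\ast}$ into $Q_1$), while the free endpoint and any remaining intermediate positions are bounded via $|e^{-\lambda\tau_l-i\xi\cdot z_l}|=e^{-\lambda\tau_l}\le 1$, each contributing a $\|G_l\|^{(0)}$ (one of which is placed into $Q_0$). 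This scheme automatically gives $\#(Q_2)$ even and $\#(Q_0)=\#(Q_1)$ equal to the number of type-1 pairs.

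The main obstacle is the combinatorial bookkeeping when several type-1 pairs share intermediate positions or are nested (as already happens for pairs $\{1,4\}$ and $\{2,3\}$ in a single chain, where position $3$ is intermediate for both), since every index in $\{1,\dots,2n\}$ must be placed in exactly one of $Q_0, Q_1, Q_2$, and every pair must retain at least one factor depending non-trivially on its own $(\lambda_{j,k},\xi_{j,k})$ to keep the outer integrations convergent. I would resolve this by processing the pairs in order of increasing span, innermost first, and greedily assigning $l^{\ast}$ to an intermediate index not yet claimed by a shorter pair; the factor $2$ in front of $\|G_l\|^{(1)}_{(\cdot,\cdot)}$ in (\ref{D-6}) absorbs the loss from a change of variables in $\lambda$ whenever an intermediate position must carry shifted Laplace--Fourier factors from two nested pairs (alternatively, a two-case split of $|s_j-s_k|$ according to the sign of $s_j-s_k$). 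Verifying that this greedy assignment always terminates successfully and that each outer $\lambda^{\alpha_0-1}d\lambda\,\mu_{j,k}(d\xi)$ integration can be reduced to a single-pair contribution of the form $\|G_{l^{\ast}}\|^{(1)}_{j,k}$ is the most delicate step of the argument.
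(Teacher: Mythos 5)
Your reduction to increment variables and the within-chain/cross-chain dichotomy are reasonable starting points, but the central mechanism you propose for within-chain pairs does not go through, and the failure occurs exactly at the step you flag as delicate. After writing $e^{-\lambda_{j,k}\vert s_k-s_j\vert+i\xi_{j,k}\cdot(x_k-x_j)}=\prod_{l=j+1}^{k}e^{-\lambda_{j,k}\tau_l+i\xi_{j,k}\cdot z_l}$ and factorizing over increments, the $l$-th factor of the space-time integral becomes $\widehat{G_l}(\Lambda_l,\Xi_l)$ with $\Lambda_l=\sum_{(j,k):\,j<l\le k}\lambda_{j,k}$ and $\Xi_l=\sum_{(j,k):\,j<l\le k}\xi_{j,k}$. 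For the representative $l^{*}$ of a pair $(j,k)$ to yield $\|G_{l^{*}}\|^{(1)}_{j,k}$ after the outer integration, you need $\Lambda_{l^{*}}=\lambda_{j,k}$ and $\Xi_{l^{*}}=\xi_{j,k}$ exactly, i.e.\ $l^{*}$ must be covered by no other pair. For nested pairs such as $(2,3)$ inside $(1,4)$ this is impossible: the only admissible representative of $(2,3)$ is the increment $l=3$, which is also covered by $(1,4)$, so one is left with $\int\vert\widehat{G_3}(\lambda_{2,3}+\lambda_{1,4},\,\xi_{2,3}+\xi_{1,4})\vert\,\lambda_{2,3}^{\alpha_0-1}d\lambda_{2,3}\,\mu_{2,3}(d\xi_{2,3})$, coupled to the $(\lambda_{1,4},\xi_{1,4})$-integration of $\widehat{G_4}$. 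This is not comparable to $\|G_3\|^{(1)}_{2,3}\|G_4\|^{(1)}_{1,4}$: a shift in $\xi$ against the arbitrary finite measure $\mu_{2,3}$ admits no uniform control, and a change of variables in $\lambda$ relocates the singularity of $\lambda^{\alpha_0-1}$ rather than costing a factor $2$. Your cross-chain step has a related gap: Cauchy--Schwarz cleanly decouples the two chains only when the paired indices are the terminal elements of their chains, so that the inner double integral is isolated from the remaining kernels.

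The paper's proof avoids all of this by induction on $n$, always peeling the pair containing the current last element of a chain. There, the monotonicity $\gamma^0_{j_0,2n}(s_{2n}-s_{j_0})\le\gamma^0_{j_0,2n}(s_{2n}-s_{2n-1})$ attaches the entire time factor to the single final increment (rather than spreading it over all intermediate ones), the variable $(s_{2n},x_{2n})$ is integrated out in isolation to produce $\|G_{2n}\|^{(1)}_{j_0,2n}$, and the partner's two neighbouring kernels $G_{j_0},G_{j_0+1}$ are merged by convolution so the induction hypothesis applies to $2(n-1)$ functions; the norm of the convolution is then split by Cauchy--Schwarz into $\|G_{j_0}\|^{(2)}\|G_{j_0+1}\|^{(2)}$ or $\|G_{j_0}\|^{(0)}\|G_{j_0+1}\|^{(2)}$. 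This is why $Q_2$ gets populated even by within-chain pairs (e.g.\ for crossing pairs $(1,3),(2,4)$ in one chain the proof produces $Q_2=\{2,3\}$ with the index $(1,3)$ appearing twice), something your rigid assignment of type-1 pairs to $Q_0\cup Q_1$ cannot reproduce; and the factor $2$ in front of $\|G_l\|^{(1)}$ comes from the two-term order-splitting by the indicators $1_{\{s_{j_0}\le s_{2n-1}\}}$ and $1_{\{s_{j_0}>s_{2n-1}\}}$ needed when both chain endpoints have cross-chain partners other than each other, not from a change of variables. To repair your argument you would essentially have to import this peel-and-merge induction, at which point it becomes the paper's proof.
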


\proof We carry out argument by induction on $n$: When $n=1$, there are three possible forms for the left hand:
"$n_1=n_2=1$", "$n_1=2$ and $n_2=0$" or "$n_1=0$ and $n_2=2$".

When $n_1=n_2=1$, the left hand of (\ref{D-6}) is
$$
\int_{\R_+^2}ds_1ds_2\int_{(\R^d)^{2}}dxdy G_1(s_1, x)G_2(s_2,y)\gamma^0_{1,2}(s_1-s_2)
\gamma_{1,2}(x-y)\le\|G_1\|_{1,2}^{(2)}\|G_2\|_{1,2}^{\{2\}}
$$
where the inequality follows from Cauchy-Schwartz's inequality. So the claim holds with $Q_0=Q_1=\phi$ and
$Q_2=\{1,2\}$.

\medskip

When $n_1=2$ and $n_2=0$, the left is
$$
\begin{aligned}
&\int\!\int_{\{s_1<s_2\}}ds_1ds_2\int_{(\R^d)^{2}}dxdy G_1(s_1, x)G_2(s_2-s_1, y-x)
\gamma_{j,k}^0(s_1-s_2)\gamma_{1,2}(x-y)\\
&=\bigg(\int_{\R_+\times\R^d}G_1(s_1,x)ds_1dx\bigg)\bigg(\int_{\R_+\times\R^d}G_2(t,y)
\gamma_{1,2}^0(t)
\gamma_{1,2}(y)dtdy\bigg).
\end{aligned}
$$
Notice
$$
\begin{aligned}
&\int_{\R_+\times\R^d}G_2(t,y)\gamma_{1,2}^0(t)
\gamma_{j,k}(y)dtdy\cr
&=\Gamma(\alpha_0)^{-1}\int_{\R_+\times\R^d}G_2(t,y)\bigg(\int_{A_{1,2}\times\R^d}e^{-\lambda t+i\xi\cdot y} 
{d\lambda\over\lambda^{1-\alpha_0}}\mu_{1,2}(d\xi)\bigg)dtdy\\
&=\Gamma(\alpha_0)^{-1}\int_{A_{1,2}\times\R^d}\bigg(\int_{\R_+\times\R^d}G_2(t,y)e^{-\lambda t+i\xi\cdot y} dtdy\bigg)
{d\lambda\over\lambda^{1-\alpha_0}}\mu_{1,2}(d\xi)\le \|G_2\|_{1,2}^{(1)}.
\end{aligned}
$$
In summary
$$
\begin{aligned}
&\int\!\int_{\{s_1<s_2\}}ds_1ds_2\int_{(\R^d)^{2}}dxdy G_1(s_1, x)G_2(s_2-s_1, y-x)\vert s_1-s_2\vert^{-\alpha_0}\gamma(x-y)\\
&\le \|G_1\|^{(0)}\|G_2\|_{1,2}^{(1)}.
\end{aligned}
$$
Thus, the claim holds with $Q_0=\{1\}$, $Q_1=\{2\}$ and $Q_2=\phi$.

Similarly,  when $n_1=0$ and $n_2=2$,
the claim holds with the bound $\|G_1\|_{1,2}^{(1)}\|G_2\|^{(0)}$
and $Q_0=\{2\}$, $Q_1=\{1\}$ and $Q_2=\phi$.

\bigskip

Assume the claim holds for $n-1$. 
We now verified it for $n$. Assume 
that $j_0$ and $k_0$ are paired with $2n$ and $n_1$, respectively, i.e.
$(j_0, 2n), (k_0, n_1)\in {\cal D}$. The idea is to separate 
$$
G_{2n}(s_{2n}-s_{2n-1}, x_{2n}-x_{2n-1})\gamma^0_{j_0, 2n}(s_{j_0}-s_{2n})\gamma_{j_0,2n}(x_{j_0}-x_{2n})
$$
or
$$
G_{n_1}(s_{n_1}-s_{n_1-1}, x_{n_1}-x_{n_1-1})
\gamma^0_{k_0, n_1}(s_{k_0}-s_{n_1})\gamma_{k_0,n_1}(x_{k_0}-x_{n_1}),
$$
whichever possible, from the multiple time-space integral.

We consider the following three possible cases: Case 1:
$j_0\in I_2$ or $k_0\in I_1$.
In other words,  at least one of $n_1$ and $2n$ has a
domestic pair. In the remaining settings,
both $n_1$ and $2n$ have inter-group pairs. We shall treat it in the following
two different cases: Case 2:
$j_0=n_1$ and $k_0=2n$, i.e., $(n_1, 2n)\in{\cal D}$; and Case 3:
$1\le j_0<n_1<k_0<2n$.

\medskip

{\bf Case 1}: We actually claim a better bound
\begin{align}\label{D-7}
&\int_{(\R_+)_<^{n_1}\times(\R_+)_<^{n_2}}
ds_1\cdots ds_{2n}\int_{(\R^d)^{n_1}\times(\R^d)^{n_2}} dx_1\cdots dx_{2n}\\
&\times\bigg(\prod_{\rho=1}^2\prod_{l\in I_\rho}G_l(s_l-s_{l-1}, x_l-x_{l-1})\bigg)
\prod_{(j,k)\in{\cal D}}\gamma^0_{j,k}(s_j-s_k)\gamma_{j,k}(x_j-x_k)\nonumber\\
&\le{1\over 2}\bigg(\prod_{l\in Q_0}\|G_l\|^{(0)}\bigg)\bigg(\prod_{l\in Q_1}2\|G_l\|_{(\cdot,\cdot)}^{(1)}\bigg)
\bigg(\prod_{l\in Q_2}\|G_l\|_{(\cdot,\cdot)}^{(2)}\bigg)\nonumber
\end{align}
in this case for the argument needed in Case 3.

Due to similarity, we only consider the case $j_0\in I_2$.  
For $s_{j_0}\le s_{2n-1}\le s_{2n}$,
$$
\gamma^0_{j_0, 2n}(s_{2n}-s_{j_0})\le\gamma_{j_0, 2n}^0(s_{2n}-s_{2n-1}).
$$
Thus, the left hand side of (\ref{D-7}) yields to the bound
$$
\begin{aligned}
&\int_{(\R_+)_<^{n_1}\times(\R_+)_<^{n_2-1}}ds_1\cdots ds_{2n-1}\int_{(\R^d)^{n_1}\times(\R^d)^{n_2-1}} dx_1\cdots dx_{2n-1}\\
&\times\bigg(\prod_{\rho=1}^2\prod_{l\in I'_\rho}G_l(s_l-s_{l-1}, x_l-x_{l-1})\bigg)
\bigg(\prod_{(j,k)\in{\cal D}'}\gamma^0_{j,k}(s_j-s_k)\gamma_{j,k}(x_j-x_k)\bigg)
\int_{s_{2n-1}}^\infty ds_{2n}\\
&\times\int_{\R^d}dx_{2n}G_{2n}(s_{2n}-s_{2n-1}, x_{2n}-x_{2n-1})\gamma_{j_0, 2n}^0(s_{2n}-s_{2n-1})\gamma_{j_0, 2n}(x_{2n}-x_{j_0})
\end{aligned}
$$
where $I_1'=I_1$ and $I_2'=I_2\setminus \{2n\}=\{n_1+1,\cdots, 2n-1\}$
and ${\cal D}'={\cal D}\setminus (j_0,2n)\in \Pi_{n-1}$.
Notice
$$
\begin{aligned}
&\int_{s_{2n-1}}^\infty ds_{2n}\int_{\R^d}dx_{2n}G_{2n}(s_{2n}-s_{2n-1}, x_{2n}-x_{2n-1})\gamma^0_{j_0,2n}(s_{2n}-s_{2n-1})\gamma_{j_0,2n}(x_{2n}-x_{j_0})\\
&=\int_0^\infty ds\int_{\R^d}dx_{2n}G_{2n}(s,  x_{2n}-x_{2n-1})\gamma^0_{j_0,2n}(s)
\int_{\R^d}e^{i\xi\cdot(x_{2n}-x_{j_0})}
\mu_{j_0, 2n}(d\xi)\\
&=\Gamma(\alpha_0)^{-1}\int_{A_{j_0, 2n}\times\R^d}{d\lambda\over\lambda^{1-\alpha_0}}\mu(d\xi)\exp\Big\{i\xi\cdot (x_{2n-1}-x_{j_0})\Big\}\int_{\R_+\times\R^d}G_{2n}(s,x)e^{-\lambda s+i\xi\cdot x}dsdx\\
&\le \Gamma(\alpha_0)^{-1}\int_{A_{j_0,2n}\times\R^d}{d\lambda\over\lambda^{1-\alpha_0}}\mu(d\xi)\bigg\vert\int_{\R_+\times\R^d}G_{2n}(s,x)e^{-\lambda s+i\xi\cdot x}dsdx\bigg\vert=\|G_{2n}\|_{j_0,2n}^{(1)}.
\end{aligned}
$$
In summary, the left hand of (\ref{D-7}) yields to the bound
\begin{align}\label{D-8}
&\|G_{2n}\|_{j_0,2n}^{(1)} \int_{(\R_+)_<^{n_1}\times(\R_+)_<^{n_2-1}}ds_1\cdots ds_{2n-1}\int_{(\R^d)^{n_1}\times(\R^d)^{n_2-1}} dx_1\cdots dx_{2n-1}\nonumber\\
&\times\bigg(\prod_{\rho=1}^2\prod_{l\in I'_\rho}G_l(s_l-s_{l-1}, x_l-x_{l-1})\bigg)
\bigg(\prod_{(j,k)\in{\cal D}'}\gamma_{j,k}^0(s_j-s_k)\gamma_{j,k}(x_j-x_k)\bigg).
\end{align}
Denote $\tilde{I}_1=I_1$, $\tilde{I}_2=I_2\setminus \{j_0, 2n\}$. When $j_0=2n-1$,
the right hand side is equal to
$$
\begin{aligned}
&\|G_{2n}\|_{j_0, 2n}^{(1)}\|G_{2n-1}\|^{(0)}\int_{(\R_+)_<^{n_1}\times(\R_+)_<^{n_2-2}}ds_1\cdots ds_{2n-2}\int_{(\R^d)^{n_1}\times(\R^d)^{n_2-2}} dx_1\cdots dx_{2n-2}\\
&\times\bigg(\prod_{\rho=1}^2\prod_{l\in I'_\rho}G_l(s_l-s_{l-1}, x_l-x_{l-1})\bigg)
\bigg(\prod_{(j,k)\in{\cal D}'}\gamma_{j,k}^0(s_j-s_k)\gamma_{j,k}(x_j-x_k)\bigg).
\end{aligned}
$$
Applying the induction assumption, we yield the bound
$$
\begin{aligned}
&\|G_{2n}\|_{j_0,2n}^{(1)}\|G_{2n-1}\|^{(0)}\bigg(\prod_{l\in \tilde{Q}_0}\|G_l\|^{(0)}\bigg)
\bigg(\prod_{l\in \tilde{Q}_1}2\|G_l\|_{(\cdot, \cdot)}^{(1)}\bigg)\bigg(\prod_{l\in \tilde{Q}_2}
\|G_l\|_{(\cdot,\cdot)}^{(2)}\bigg)\\
&={1\over 2}\bigg(\prod_{l\in Q_0}\|G_l\|^{(0)}\bigg)\bigg(\prod_{l\in Q_1}2\|G_l\|_{(\cdot,\cdot)}^{(1)}\bigg)
\bigg(\prod_{l\in Q_2}\|G_l\|_{(\cdot,\cdot)}^{(2)}\bigg)\nonumber
\end{aligned}
$$
where $\tilde{Q}_0$, $\tilde{Q}_1$, $\tilde{Q}_2$ form a partition of $\{1,\cdots, 2(n-1)\}$
obeying the rule described by Theorem \ref{th-3}. Setting
$Q_0=\tilde{Q}_0\cup\{2n-1\}$,  $Q_1=\tilde{Q}_1\cup\{2n\}$ and $Q_2=\tilde{Q}_2$
we have proved (\ref{D-7}).
\medskip

We now assume $j_0<2n-1$. Set
$$
d\tilde{\bf s}=ds_1\cdots ds_{j_0-1}ds_{j_0+1}\cdots d_{2n-1}\hskip.1in\hbox{and}
\hskip.1in d\tilde{\bf x}=dx_1\cdots dx_{j_0-1}dx_{j_0+1}\cdots d_{2n-1}.
$$
The bound in (\ref{D-8}) can be written as
$$
\begin{aligned}
&\|G_{2n}\|_{j_0,2n}^{(1)}\int_{(\R_+)_<^{n_1}\times(\R_+)_<^{n_2-2}}d\tilde{\bf s}\int_{(\R^d)^{n_1}\times(\R^d)^{n_2-2}} 
d\tilde{\bf x}\bigg(\prod_{\rho=1}^2
\prod_{l\in \tilde{I}_\rho}G_l(s_l-s_{l-1}, x_l-x_{l-1})\bigg)\\
&\times\bigg(\int_{s_{j_0-1}}^{s_{j_0+1}}
\int_{\R^d}
G_{j_0}(s_{j_0}-s_{j_0-1}, x_{j_0}-x_{j_0-1})G_{j_0+1}(s_{j_0+1}-s_{j_0}, x_{j_0+1}-x_{j_01})dx_{j_0}ds_{j_0}\bigg)\cr
&\times\bigg(\prod_{(j,k)\in{\cal D}'}\gamma^0_{j,k}(s_j-s_k)\gamma_{j,k}(x_j-x_k)\bigg).
\end{aligned}
$$
Notice
$$
\begin{aligned}
&\int_{s_{j_0-1}}^{s_{j_0+1}}
\int_{\R^d}
G_{j_0}(s_{j_0}-s_{j_0-1}, x_{j_0}-x_{j_0-1})G_{j_0+1}(s_{j_0+1}-s_{j_0}, x_{j_0+1}-x_{j_0})dx_{j_0}ds_{j_0}\\
&=\tilde{G}_{j_0, j_0+1}(s_{j_0+1}-s_{j_0-1}, x_{j_0+1}-x_{j_0-1})
\end{aligned}
$$
where
$$
\tilde{G}_{j_0, j_0+1}(t,x)=\int_0^t\!\!\int_{\R^d}G_{j_0}(s, y)G_{j_0+1}(t-s, x-y)dyds.
$$
Therefore, by (\ref{D-8})
$$
\begin{aligned}
&\int_{(\R_+)_<^{n_1}\times(\R_+)_<^{n_2}}ds_1\cdots ds_{2n}
\int_{(\R^d)^{n_1}\times(\R^d)^{n_2}} dx_1\cdots dx_{2n}\cr
&\times\bigg(\prod_{\rho=1}^2\prod_{l\in I_\rho}G_l(s_l-s_{l-1}, x_l-x_{l-1})\bigg)
\prod_{(j,k)\in{\cal D}}\gamma_{j,k}^0(s_j-s_k)\gamma_{j,k}(x_j-x_k)\\
&\le\|G_{2n}\|_{j_0, 2n}^{(1)} \int_{(\R_+)_<^{n_1}\times(\R_+)_<^{n_2-2}}d\tilde{\bf s}\int_{(\R^d)^{n_1}\times(\R^d)^{n_2-2}} 
d\tilde{\bf x}\bigg(\prod_{\rho=1}^2
\prod_{l\in \tilde{I}_\rho}G_l(s_l-s_{l-1}, x_l-x_{l-1})\bigg)\\
&\times\tilde{G}_{j_0, j_0+1}(s_{j_0+1}-s_{j_0-1}, x_{j_0+1}-x_{j_0-1})
\bigg(\prod_{(j,k)\in{\cal D}'}\gamma_{j,k}^0s_j-s_k)\gamma_{j,k}(x_j-x_k)\bigg).
\end{aligned}
$$
Applying the induction assumption with the functions ($2(n-1)$ of them)
$$
G_1,\cdots, G_{j_0-1}, \tilde{G}_{j_0, j_0+1}, 
G_{j_0+2},\cdots, G_{2n-1}
$$
with ${\cal D}'={\cal D}\setminus\{j_0,2n\}$ and with $2(n-1)=n_1+(n_2-2)$, we have one of the three possible
bounds:
\begin{align}\label{D-9}
&\|G_{2n}\|_{j_0, 2n}^{(1)}\|\tilde{G}_{j_0, j_0+1}\|_{(\cdot, \cdot)}^{(i)}
\bigg(\prod_{l\in \tilde{Q}^{(0)}}\|G_l\|^{(0)}\bigg)\bigg(\prod_{l\in \tilde{Q}_1}2\|G_l\|_{(\cdot, \cdot)}^{(1)}\bigg)
\bigg(\prod_{l\in \tilde{Q}_2}\|G_l\|_{(\cdot, \cdot)}^{(2)}\bigg)\\
&={1\over 2}\|\tilde{G}_{j_0, j_0+1}\|_{(\cdot, \cdot)}^{(i)}
\bigg(\prod_{l\in \tilde{Q}^{(0)}}\|G_l\|^{(0)}\bigg)\bigg(\prod_{l\in \tilde{Q}_1\cup\{2n\}}2\|G_l\|_{(\cdot, \cdot)}^{(1)}\bigg)
\bigg(\prod_{l\in \tilde{Q}_2}\|G_l\|_{(\cdot, \cdot)}^{(2)}\bigg)
\hskip.2in i=0, 1,2\nonumber
\end{align}
where
$\tilde{Q}_0$, $\tilde{Q}_1$, $\tilde{Q}_2$ form a partition of $\{1,\cdots, 2n-1\}\setminus\{j_0, j_0+1\}$.
When $i=0$, $\|\tilde{G}_{j_0, j_0+1}\|_{(\cdot, \cdot)}^{(0)}=\|\tilde{G}_{j_0, j_0+1}\|^{(0)}$ 
is free of $(j,k)$-index.
As $i=1, 2$,
$\|\tilde{G}_{j_0, j_0+1}\|^{(i)}=\|\tilde{G}_{j_0, j_0+1}\|^{(i)}_{j_1,k_1}$ for some 
$(j_1, k_1)\in{\cal D}'$ (recall that ${\cal D}'$ is a pair partition on $\{1,\cdots, 2n\}\setminus\{j_0,2n\}$).

\medskip
When $i=0$, by induction assumption $1+\#(\tilde{Q}_0)=\#(\tilde{Q}_1)$, $\#(\tilde{Q}_2)$ is even.
Recall that all $(j,k)\in {\cal D}'$ have been assigned into
$\tilde{Q}_1$ and $\tilde{Q}_2$ according to the statement of Theorem \ref{th-3}.
In particular, the number of $(j,k)\in {\cal D}'$ in $\tilde{Q}_1$- product is $\#(\tilde{Q}_1)$
and the number of $(j,k)\in {\cal D}'$ in $\tilde{Q}_2$- product is $2^{-1}\#(\tilde{Q}_2)$.
Further
$$
\|\tilde{G}_{j_0, j_0+1}\|_0=\|G_{j_0}\|^{(0)}\|G_{j_0+1}\|^{(0)}.
$$
The bound (\ref{D-7}) has been verified with $Q_0=\tilde{Q}_0\cap\{j_0, j_0+1\}$, $Q_1=\tilde{Q}_1\cup\{2n\}$
and $Q_2=\tilde{Q}_2$.

\medskip
When $i=1$, $\#(\tilde{Q}_0)=1+\#(\tilde{Q}_1)$, $\#(\tilde{Q}_2)$ is even. 
Recall that $\|\tilde{G}_{j_0, j_0+1}\|^{(1)}=\|\tilde{G}_{j_0, j_0+1}\|^{(1)}_{j_1,k_1}$ for some 
$(j_1, k_1)\in{\cal D}'={\cal D}\setminus\{j_0, 2n\}$.
All $(j,k)\in {\cal D}'\setminus\{(j_1,k_1)\}$ have been assigned into
$\tilde{Q}_1$ and $\tilde{Q}_2$  in a way that $\#(\tilde{Q}_1)-1$ of them
are in $\tilde{Q}_1$- product and remaining of them (the number is $2^{-1}\#(\tilde{Q}_2)$)
are in $\tilde{Q}_2$-product (each of them appeasers twice).
$$
\begin{aligned}
\|\tilde{G}_{j_0, j_0+1}\|_{j_1.k_1}^{(1)}
&=\Gamma(\alpha_0)^{-1}\int_{A_{j_1, k_1}\times\R^d}
\bigg\vert \int_{\R_+\times\R^d} G_{j_0}(t,x)e^{-\lambda t+i\xi\cdot x}dtdx\bigg\vert\cr
&\times\bigg\vert \int_{\R_+\times\R^d} G_{j_0+1}(t,x)e^{-\lambda t+i\xi\cdot x}dtdx\bigg\vert{d\lambda\over\lambda^{1-\alpha_0}}\mu(d\xi)\\
&\le\bigg\{\Gamma(\alpha_0)^{-1}\int_{A_{j_1, k_1}\times\R^d}
\bigg\vert \int_{\R_+\times\R^d} G_{j_0}(t,x)e^{-\lambda t+i\xi\cdot x}dtdx\bigg\vert^2{d\lambda\over\lambda^{1-\alpha_0}}\mu(d\xi)\bigg\}^{1/2}\\
&\times\bigg\{\Gamma(\alpha_0)^{-1}\int_{A_{j_1, k_1}\times\R^d}
\bigg\vert \int_{\R_+\times\R^d} G_{j_0+1}(t,x)e^{-\lambda t+i\xi\cdot x}dtdx\bigg\vert^2{d\lambda\over\lambda^{1-\alpha_0}}\mu(d\xi)\bigg\}^{1/2}.
\end{aligned}
$$
Notice
$$
\begin{aligned}
&\Gamma(\alpha_0)^{-1}\int_{A_{j_1, k_1}\times\R^d}
\bigg\vert \int_{\R_+\times\R^d} G_{j_0}(t,x)e^{-\lambda t+i\xi\cdot x}dtdx\bigg\vert^2{d\lambda\over\lambda^{1-\alpha_0}}\mu(d\xi)\\
&=\int_{(\R_+\times\R^d)^2}\gamma^0_{j_1,k_1}(s+t)\gamma_{j,k}(x-y)G_{j_0}(s,x)G_{j_0}(t,x)dsdtdxdy\\
&\le \big(\|G_{j_0}\|_{j_1,k_1}^{(2)}\big)^2.
\end{aligned}
$$
Same thing happens to $G_{j_0+1}$-factor. Thus,
$$
\|\tilde{G}_{j_0, j_0+1}\|_{j_1, k_1}^{(1)}\le \|G_{j_0}\|_{j_1,k_1}^{(2)}
\|G_{j_0+1}\|_{j_1, k_1}^{(2)}.
$$
So (\ref{D-7}) has been varified with $Q_0=\tilde{Q}_0$, $Q_1=\tilde{Q}_1\cup\{2n\}$
and $Q_2=\tilde{Q}_2\cup\{j_0, j_0+1\}$.

When $i=2$, $\#(\tilde{Q}_0)=\#(\tilde{Q}_1)$, $\#(\tilde{Q}_2)$ is odd.
The the pairs $(j,k)\in {\cal D}'$ are distributed in a way that
$\#(\tilde{Q}_1)$ of them are in $\tilde{Q}_1$-product and remaining of them
are in $\tilde{Q}_2$-product. All $(j,k)$ in $\tilde{Q}_2$-product appears twice
except $(j_1,k_1)$ (which appears once). Consequently,
the number of $(j,k)\in {\cal D}'$ in $\tilde{Q}_1$ is $\#(\tilde{Q}_1)$
and the number of $(j,k)\in {\cal D}'$ in $\tilde{Q}_2$  is $2^{-1}(\#(\tilde{Q}_2)+1)$.
Further
$$
\begin{aligned}
&\|\tilde{G}_{j_0, j_0+1}\|_{j_1, k_1}^{(2)}\\
&=\bigg\{\int_{(\R_+\times\R^d)^2}\gamma^0_{j_1, k_1}(s-r)\gamma_{j_1,k_1}(x-y)
\tilde{G}_{j_0, j_0+1}(s,x)\tilde{G}_{j_0, j_0+1}(r,y)dsdxdrdy\bigg\}^{1/2}\\
&=\bigg\{\int_{\R^{d+1}}\bigg\vert\int_0^\infty\!\!\int_{\R^d}e^{i\lambda t+i\xi\cdot x}
\tilde{G}_{j_0, j_0+1}(t,x)dxdt\bigg\vert^2\mu_{j_1,k_1}^0(d\lambda)
\mu_{j,k}(d\xi)\bigg\}^{1/2}
\end{aligned}
$$
where $\mu_{j_1,k_1}^0(d\lambda)$ is the spectral measure of $\gamma_{j_1, k_1}^0(\cdot)$
(Recall that $\gamma_{j_1, k_1}^0(\cdot)$ is non-negative-definite).

Notice
$$
\begin{aligned}
&\bigg\vert\int_0^\infty\!\!\int_{\R^d}e^{i\lambda t+i\xi\cdot x}
\tilde{G}_{j_0, j_0+1}(t,x)dxdt\bigg\vert\\
&=\bigg\vert\bigg(\int_0^\infty\!\!\int_{\R^d}e^{i\lambda t+i\xi\cdot x}
G_{j_0}(t,x)dxdt\bigg)\bigg(\int_0^\infty\!\!\int_{\R^d}e^{i\lambda t+i\xi\cdot x}
G_{j_0+1}(t,x)dxdt\bigg)\bigg\vert\\
&\le\|G_{j_0}\|^{(0)}\bigg\vert\int_0^\infty\!\!\int_{\R^d}e^{i\lambda t+i\xi\cdot x}
G_{j_0+1}(t,x)dxdt\bigg\vert.
\end{aligned}
$$
Therefore,
$$
\begin{aligned}
&\|\tilde{G}_{j_0, j_0+1}\|_{j_1, k_1}^{(2)}\cr
&\le \|G_{j_0}\|^{(0)}
\bigg\{\int_{\R^{d+1}}\bigg\vert\int_0^\infty\!\!\int_{\R^d}e^{i\lambda t+i\xi\cdot x}
G_{j_0+1}(t,x)dxdt\bigg\vert^2\mu_{j_1, k_1}^0(d\lambda)
\mu_{j_1, k_1}(d\xi)\bigg\}^{1/2}\\
&=\|G_{j_0}\|^{(0)}\|G_{j_0+1}\|_{j_1, k_1}^{(2)}.
\end{aligned}
$$
The bound (\ref{D-7}) has been varified with $Q_0=\tilde{Q}_0\cup\{j_0\}$, $Q_1=\tilde{Q}_1\cup\{2n\}$
and $Q_2=\tilde{Q}_2\cup\{j_0+1\}$.

\medskip

{\bf Case 2}. Write $\tilde{I}_1=\{1,\cdots, n_1-1\}$, $\tilde{I}_2=\{n_1+1,\cdots, 2n-1\}$ and
$$
d\tilde{\bf s}=ds_1\cdots ds_{n_1-1}ds_{n_1+1}\cdots ds_{2n-1}\hskip.1in\hbox{and}\hskip.1in
d\tilde{\bf x}=dx_1\cdots dx_{n_1-1}dx_{n_1+1}\cdots dx_{2n-1}.
$$

Since $(n_1, 2n)\in{\cal D}$ in this case, the left hand side of (\ref{D-6}) is equal to
$$
\begin{aligned}
&\int_{(\R_+)_<^{n_1-1}\times(\R_+)_<^{n_2-1}}d\tilde{\bf s}\int_{(\R^d)^{n_1-1}\times
(\R^d)^{n_2-1}} d\tilde{\bf x}\\
&\times\bigg(\prod_{\rho=1}^2\prod_{l\in \tilde{I}_\rho}G_l(s_l-s_{l-1}, x_l-x_{l-1})\bigg)
\prod_{(j,k)\in\tilde{\cal D}}\gamma_{j,k}^0 s_j-s_k)\gamma_{j,k}(x_j-x_k)\cr
&\times\int_{s_{n_1-1}}^\infty\int_{s_{2n-1}}^\infty\int_{\R^d\times\R^d}ds_{n_1}
ds_{2n}dx_{n_1}dx_{2n}\gamma_{n_1,2n}^0 (s_{2n}-s_{n_1})
\gamma_{n_1, 2n}(x_{2n}-x_{n_1})\\
&\times G_{n_1}(s_{n_1}-s_{n_1-1}, x_{n_1}-x_{n_1-1})G_{2n}(s_{2n}-s_{2n-1}, x_{2n}-x_{2n-1}).
\end{aligned}
$$
where $\tilde{D}={\cal D}\setminus\{n_1, 2n\}$. Notice
$$
\begin{aligned}
&\int_{s_{n_1-1}}^\infty\int_{s_{2n-1}}^\infty\int_{\R^d\times\R^d}ds_{n_1}
ds_{2n}dx_{n_1}dx_{2n}\gamma_{n_1, 2n}^0 (s_{2n}-s_{n_1})\gamma_{n_1, 2n}(x_{2n}-x_{n_1})\\
&\times G_{n_1}(s_{n_1}-s_{n_1-1}, x_{n_1}-x_{n_1-1})G_{2n}(s_{2n}-s_{2n-1}, x_{2n}-x_{2n-1})\cr
&=\int_0^\infty\int_0^\infty\int_{\R^d\times\R^d}dsdrdxdy
\gamma_{n_1, 2n}^0 (s-r)\gamma_{n_1, 2n}(x-y)
G_{n_1}(s, x-x_{n_1-1})G_{2n}(r, y-x_{2n-1})\\
&\le \|G_{n_1}\|_{n_1,2n}^{(2)}\|G_{2n}\|_{n_1, 2n}^{(2)}
\end{aligned}
$$
where last step follows from Cauchy-Schwartz inequality and then shift-invariance of the space variables.

By the induction assumption, we have the bound
$$
 \|G_{n_1}\|_{n_1,2n}^{(2)}\|G_{2n}\|_{n_1, 2n}^{(2)}
 \bigg(\prod_{l\in \tilde{Q}_0}\|G_l\|^{(0)}\bigg)\bigg(\prod_{l\in \tilde{Q}_1}2\|G_l\|^{(1)}_{(\cdot, \cdot)}\bigg)
\bigg(\prod_{l\in \tilde{Q}_2}\|G_l\|_{(\cdot,\cdot)}^{(2)}\bigg)
$$
where $\tilde{Q}_0$, $\tilde{Q}_1$, $\tilde{Q}_2$ form a partition of $\{1,\cdots, 2n\}\setminus\{n_1,2n\}$
with $\#(\tilde{Q}_0)=\#(\tilde{Q}_1)$ and $\#(\tilde{Q}_2)$ being even. 
All $(j,k)\in{\cal D}'$ are distributed according to the statement of Theorem \ref{th-3}.
In particular, the number of $(j,k)$ in $\tilde{Q}_1$ is $\#(\tilde{Q}_1)$
and number of of $(j,k)$ in $\tilde{Q}_2$ is $2^{-1}\#(\tilde{Q}_2)$.
The bound(\ref{D-6}) has been varified
with $Q_0=\tilde{Q}_0$, $Q_1=\tilde{Q}_1$ and $Q_2=\tilde{Q}_2\cup\{n_1, 2n\}$.

\medskip

{\bf Case 3}. Recall that this is the case when $(j_0, 2n), (k_0, n_1)\in {\cal D}$ with
$j_0\in I_1$, $k_1\in I_2$ and $j_0\not =n_1$, $k_0\not =2n$.
The idea essentially comes from the treatment in Case 1.
The extra obstacle comes from the fact that there is no restrictive order between $s_{j_0}$ and $s_{2n}$, 
nor between
$s_{k_0}$ and $s_{n_1}$, as $(j_0, 2n)$ and $(k_0, n_1)$ are inter-group pairs.
To fix it, we break the left hand side of (\ref{D-6}) into two parts by the indicators $1_{\{s_{j_0}\le s_{2n-1}\}}$
and $1_{\{s_{j_0}> s_{2n-1}\}}$

On $\{s_{2n-1}\ge s_{j_0}\}$, 
$\gamma^0_{j_0, 2n}(s_{2n}-s_{j_0})\le \gamma^0_{j_0, 2n}(s_{2n}-s_{2n-1})$.
By a strategy same as the one used for (\ref{D-9}),  the first part of the decomposition
 yields the bound in (\ref{D-7}), i.e.,
 $$
 {1\over 2}\bigg(\prod_{l\in Q_0}\|G_l\|^{(0)}\bigg)
\bigg(\prod_{l\in Q_1}2\|G_l\|_{(\cdot,\cdot)}^{(1)}\bigg)\bigg(\prod_{l\in Q_2}\|G_l\|_{(\cdot,\cdot)}^{(2)}
\bigg).
$$

On $\{s_{2n-1}< s_{j_0}\}$, on the other hand,  $s_{n_1-1}\ge s_{j_0}>s_{2n-1}\ge s_{k_0}$. Therefore
$\gamma^0_{n_1, k_0}(s_{n_1}-s_{k_0})\le\gamma^0_{n_1, k_0}(s_{n_1}-s_{n_1-1})$.
Repeating the same procedure (with $2n$ being replaced by
$n_1$), the second part of the decomposition has the bound (\ref{D-7}), i.e.,
 $$
 {1\over 2}\bigg(\prod_{l\in Q'_0}\|G_l\|^{(0)}\bigg)
\bigg(\prod_{l\in Q'_1}2\|G_l\|_{(\cdot,\cdot)}^{(1)}\bigg)\bigg(\prod_{l\in Q'_2}\|G_l\|_{(\cdot,\cdot)}^{(2)}\bigg)
$$
for the partition $Q_1'$, $Q_2'$, $Q_3'$ of $\{1,\cdots, 2n\}$ that meets all requirement given in
Theorem \ref{th-3}.

Therefore, the left hand side of (\ref{D-6}) is less than or equal to
$$
\begin{aligned}
&{1\over 2}\Bigg\{\bigg(\prod_{l\in Q_0}\|G_l\|^{(0)}\bigg)
\bigg(\prod_{l\in Q_1}2\|G_l\|_{(\cdot,\cdot)}^{(1)}\bigg)\bigg(\prod_{l\in Q_2}\|G_l\|_{(\cdot,\cdot)}^{(2)}\bigg)\\
&+\bigg(\prod_{l\in Q'_0}\|G_l\|^{(0)}\bigg)
\bigg(\prod_{l\in Q'_1}2\|G_l\|_{(\cdot,\cdot)}^{(1)}\bigg)
\bigg(\prod_{l\in Q'_2}\|G_l\|_{(\cdot,\cdot)}^{(2)}\bigg)\Bigg\}.
\end{aligned}
$$
Between the partitions $\{Q_0, Q_1, Q_2\}$ and $\{Q'_0, Q'_1, Q'_2\}$, choosing  the one that
produces the larger product completes the induction. \qed

\section{Stratonovich integrability of $g_n(\cdot, t,x)$}\label{S}

Let $n\ge 1$ be fixed and recall that $g_n(\cdot, t,x)$ is defined in (\ref{M-14}).
Through this section, we adopt the following notations:
$$
\epsilon=(\epsilon_1,\cdots,\epsilon_n), \hskip.1in\tilde{\epsilon}=(\epsilon_{n+1},\cdots, \epsilon_{2n})\hskip.1in\hbox{and}\hskip.1in\delta=(\delta_1,\cdots,\delta_n), \hskip.1in\tilde{\delta}=(\tilde{\delta}_{n+1}, \cdots\tilde{\delta}_{2n})
$$
$$
\bar{\epsilon}=(\epsilon, \tilde{\epsilon})=(\epsilon_1,\cdots,\epsilon_{2n})\hskip.1in \hbox{and}\hskip.1in
\bar{\delta}=(\delta, \tilde{\delta})=(\delta_1,\cdots,\delta_{2n})
$$
for $\epsilon_1,\cdots,\epsilon_{2n}, \delta_1,\cdots,\delta_{2n}>0$.  The notation "$\epsilon\to 0^+$
means $\epsilon_1,\cdots,\epsilon_n\to 0^+$. The notations "$\tilde{\epsilon}\to 0^+$", "$\delta\to 0^+$",
"$\tilde{\delta}\to 0^+$", "$\bar{\epsilon}\to 0^+$" and "$\bar{\delta}\to 0^+$" are used in obvious way.

Set
\begin{align}\label{S-1}
S_{n,\epsilon,\delta}\big(g_n(\cdot, t, x)\big)
&=\int_{(\R_+\times\R^d)^n}g_n(s_1,\cdots, s_n, x_1,\cdots, x_n, t, x)\\
&\times\bigg(\prod_{k=1}^n\dot{W}_{\epsilon_k,\delta_k}(s_k, x_k)\bigg)
ds_1\cdots ds_ndx_1\cdots x_n.\nonumber
\end{align}

The main goal of this section is 

\begin{theorem}\label{th-4} Under the condition (\ref{intro-5}), the limit
\begin{align}\label{S-2}
\lim_{\epsilon,\delta\to 0^+}S_{n,\epsilon,\delta}\big(g_n(\cdot, t, x)\big)
\end{align}
exists in ${\cal L}^2(\Omega, {\cal A}, \P)$ for each $n\ge 1$, $t>0$ and $x\in\R^d$.
Consequently, $g_n(\cdot, t, x)$
is $n$-multiple Stratonovich integrable. Further,
\begin{align}\label{S-3}
&\E\Big[S_n\big(g_n(\cdot, t, x)\big)\Big]^2=\sum_{{\cal D}\in\Pi_n}\int_{[0,t]_<^n\times[0,t]_<^n}
\int_{(\R^d)^n\times(\R^d)^n}dx_1\cdots dx_{2n}\\
&\times\bigg(\prod_{\rho=1}^2\prod_{l\in I_\rho}G(s_l-s_{l-1}, x_l-x_{l-1})\bigg)\prod_{(j,k)\in{\cal D}}
\vert s_j-s_k\vert^{-\alpha_0}\gamma(x_j-x_k)\nonumber
\end{align}
where, $I_1=\{1,\cdots, n\}$, $I_2=\{n+1,\cdots 2n\}$, where, to simplify the notation, we follow the convention on the right hand side that $s_0=0$, $x_0=0$,
$s_n=0$, $x_n=0$ in the expression $G(s_{n+1}-s_n, x_{n+1}-x_n)$.
\end{theorem}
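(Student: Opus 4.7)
The plan is to apply Lemma \ref{L-0}, which reduces the required ${\cal L}^2$-convergence to showing that the cross moment $J_{\bar\epsilon,\bar\delta,\bar\epsilon',\bar\delta'} := \E\big[S_{n,\bar\epsilon,\bar\delta}(g_n(\cdot,t,x))\, S_{n,\bar\epsilon',\bar\delta'}(g_n(\cdot,t,x))\big]$ has a finite limit as all four parameter vectors tend to $0^+$. Applying the smoothed Wick identity (\ref{M-30}) to the tensor-product integrand $f = g_n\otimes g_n$ with cross-covariance (\ref{W-5}), this cross moment expands as a sum over pair partitions ${\cal D}\in\Pi_n$ of $\{1,\ldots,2n\}$. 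After the time-space reversal from (\ref{M-10-20}) is performed on each copy of $g_n$, each summand is precisely of the form addressed by Theorem \ref{th-3} with $n_1=n_2=n$, $G_l = G\cdot 1_{[0,t]}$, and with the smoothed covariances in the roles of $\gamma^0_{j,k}$ and $\gamma_{j,k}$.

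I would then establish a uniform-in-smoothing bound together with a termwise limit. Since the smoothed covariances are non-negative kernels dominated by $|u|^{-\alpha_0}\gamma(x)$ in the natural way (smaller time-$A$ window, Gaussian-tempered spectral measure), Theorem \ref{th-3} gives each summand a uniform bound of the form $\prod\|G\|^{(0)}\cdot\prod 2\|G\|^{(1)}_{(\cdot,\cdot)}\cdot\prod\|G\|^{(2)}_{(\cdot,\cdot)}$ computed with $A_{j,k}=\R_+$ and $\mu_{j,k}=\mu$. Via Plancherel on the wave kernel ${\cal F}G(t,\xi) = \sin(|\xi|t)/|\xi|$, these norms reduce to integrals controlled (up to constants depending only on $t$ and $\alpha_0$) by $\int_{\R^d}(1+|\xi|^2)^{-(2-\alpha_0)/2}\mu(d\xi)$, which is finite precisely under (\ref{intro-5}). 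Termwise convergence then follows by a dominated-convergence argument on the Fourier/Laplace side, using the representations (\ref{intro-18}) and (\ref{intro-3}) to recast each summand as an integral over auxiliary spectral variables $(\vec\lambda,\vec\xi)$ with integrand dominated by the $(\bar\epsilon,\bar\delta)$-free majorant extracted from Theorem \ref{th-3}; the limit coincides with the right-hand side of (\ref{S-3}). Upgrading to Cauchyness of $\{S_{n,\bar\epsilon,\bar\delta}(g_n(\cdot,t,x))\}$ in ${\cal L}^2$ is then immediate from the identity $\|S_{n,\bar\epsilon,\bar\delta} - S_{n,\bar\epsilon',\bar\delta'}\|_2^2 = \|S_{n,\bar\epsilon,\bar\delta}\|_2^2 + \|S_{n,\bar\epsilon',\bar\delta'}\|_2^2 - 2 J_{\bar\epsilon,\bar\delta,\bar\epsilon',\bar\delta'}$, and (\ref{S-3}) follows by specializing $\bar\epsilon'=\bar\epsilon$ and $\bar\delta'=\bar\delta$.

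The main technical obstacle I anticipate is the Plancherel verification that $\|G\|^{(1)}$ is finite under exactly the sharp hypothesis (\ref{intro-5}); note that $\|G\|^{(2)}$ alone requires only Dalang's condition (\ref{intro-14}), so it is $\|G\|^{(1)}$ that forces (\ref{intro-5}) into the picture. This requires a careful oscillatory estimate of $\int_0^t e^{-\lambda s}\sin(|\xi|s)/|\xi|\,ds$ in the regime $|\xi|\sim\lambda$, which I would defer to the appendix as indicated in the introduction. In dimension $d=3$, where $G$ is a surface measure and $g_n$ a priori only a generalized function, the same argument runs with $G_\eta = p_3(\eta,\cdot)*G$ in place of $G$; since ${\cal F}G_\eta(t,\xi) = e^{-\eta|\xi|^2/2}{\cal F}G(t,\xi)$, the Theorem \ref{th-3} bound is uniform in $\eta$, so the $\eta\to 0^+$ limit is handled within the same framework and the two limits combine without difficulty.
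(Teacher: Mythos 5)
Your skeleton — reduce to convergence of the cross moment via Lemma \ref{L-0}, expand by Wick's formula into a sum over pair partitions, and control each summand through Theorem \ref{th-3} — matches the paper's set-up, and your uniform-in-smoothing bound is essentially how the paper uses Theorem \ref{th-3} elsewhere (e.g.\ for (\ref{E-3})). But there is a genuine gap at the step you describe as ``termwise convergence by a dominated-convergence argument on the Fourier/Laplace side, with integrand dominated by the majorant extracted from Theorem \ref{th-3}.'' Theorem \ref{th-3} does not produce a pointwise integrable majorant of the spectral integrand: its conclusion is a bound on the \emph{value} of the multiple time–space integral, obtained by iterated Cauchy--Schwarz steps and by exploiting the time ordering to decide, link by link, whether to pay with an $\|\cdot\|^{(0)}$, $\|\cdot\|^{(1)}$ or $\|\cdot\|^{(2)}$ norm. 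If you pass to the spectral variables $(\vec\lambda,\vec\xi)$ and take absolute values of the integrand so as to invoke dominated convergence, you destroy precisely the pairing structure that makes the Theorem \ref{th-3} bound finite under the sharp condition (\ref{intro-5}); a crude product bound $\prod_l\big(c_l^2+|\Xi_l|^2\big)^{-1}$ over all links, integrated against all $2n$ copies of $\lambda^{-(1-\alpha_0)}d\lambda\,\mu(d\xi)$, is not known to be integrable under (\ref{intro-5}). So the existence of the limit of each summand — which is the actual content of the theorem — is asserted rather than proved.

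The paper's proof fills this hole by a different mechanism, in three stages. First (Lemma \ref{L-2}) it Laplace-transforms the time variables and uses the identity (\ref{intro-16}) to convert the wave chain into a Brownian chain; on that side the spectral integrand is $\exp\{-\frac12\Var(\cdots)\}\prod\mu(d\xi_{j,k})$, which is \emph{non-negative and monotone} in $\bar\epsilon$, so monotone convergence (not DCT) gives convergence of the Laplace transforms, and finiteness reduces to moments of Brownian intersection local times. Second (Lemma \ref{L-3}) the continuity theorem for Laplace transforms yields weak convergence of the associated measures, upgraded to convergence at the fixed point $(t_1,t_2)$ by an equicontinuity estimate; this handles the limit $\bar\epsilon\to0^+$ with the time covariance truncated at a fixed level $\eta$. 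Third (Lemma \ref{L-4}) the joint limit in $(\bar\epsilon,\bar\delta)$ — which your proposal treats as a single limit but which requires decoupling — is obtained from the monotonicity of $\gamma^0_\delta$ in $\delta$ together with the uniform-in-$\bar\epsilon$ tail estimate (\ref{S-11}); it is only here, for the tail $\gamma^0_0-\gamma^0_\eta$, that Theorem \ref{th-3} and the appendix bounds (where (\ref{intro-5}) enters through $\|G\|^{(1)}$, as you correctly anticipated) are actually invoked inside this proof. You would need to supply substitutes for all three stages for your argument to close.
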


Notice that under the initial condition given in (\ref{intro-1}), 
$$
S_{n,\epsilon,\delta}\big(g_n(\cdot, t, x)\big)\buildrel d\over =S_{n,\epsilon,\delta}\big(g_n(\cdot, t, 0)\big)
$$
we may take $x=0$ in the proof of Theorem \ref{th-4}. By Lemma \ref{L-0}, to establish (\ref{S-2}), all we need is to prove
that the limit
\begin{align}\label{S-4}
\lim_{\bar{\epsilon},\bar{\delta}\to 0^+}
\E S_{n,\epsilon,\delta}\big(g_n(\cdot, t, 0)\big)S_{n,\tilde{\epsilon}, \tilde{\delta}}\big(g_n(\cdot, t, 0)\big)
\end{align}
exists. By Wick's formula (\ref{M-8}) and the covariance identity (\ref{W-5})
$$
\begin{aligned}
&\E S_{n,\epsilon,\delta}\big(g_n(\cdot, t, 0)\big)S_{n,\tilde{\epsilon}, \tilde{\delta}}\big(g_n(\cdot, t, 0)\big)\\
&=\sum_{{\cal D}\in\Pi_n}\int_{(\R_+\times\R^d)^{2n}}g_n(s_1,\cdots, s_n, x_1,\cdots, x_n, t, 0)
g_n(s_{n+1},\cdots, s_{2n}, x_{n+1},\cdots, x_{2n}, t, 0)\nonumber\\
&\times\prod_{(j,k)\in{\cal D}}\gamma^0_{\delta_j\vee\delta_k}(s_j-s_k)\gamma_{\epsilon_j+\epsilon_k}(x_j-x_k)\\
&=\sum_{{\cal D}\in\Pi_n}\int_{[0, t]_<^n\times[0, t]_<^n}ds_1\cdots ds_{2n}\int_{(\R^d)^n\times(\R^d)^n}
dx_1\cdots dx_{2n}\Big(G(t-s_n, -x_n)\cdots G(s_2-s_1, x_2-x_1)\Big)\\
&\times\Big(G(t-s_{2n}, -x_{2n})\cdots 
G(s_{n+2}-s_{n+1}, x_{n+2}-x_{n+1})\Big)\prod_{(j,k)\in{\cal D}}
\gamma_{\delta_j\vee\delta_k}^0(s_j-s_k)\gamma_{\epsilon_j+\epsilon_k}(x_j-x_k)\nonumber
\end{aligned}
$$
where $\gamma_\delta^0(\cdot)$ and $\gamma_\epsilon(\cdot)$ are defined in (\ref{M-17}).

Under the substitution $s_l\mapsto t-s_{n-l+1}$, $x_l\mapsto -x_{n-l+1}$ ($1\le l\le n$)
and $s_l\mapsto t-s_{2n-l+1}$, $x_l\mapsto -x_{2n-l+1}$ ($n+1\le l\le 2n$), the right hand side 
is equal to
$$
\begin{aligned}
&\sum_{{\cal D}\in\Pi_n}\int_{[0, t]_<^n\times[0,t]_<^n}ds_1\cdots ds_{2n}\int_{(\R^d)^{2n}}dx_1\cdots dx_{2n}
\bigg(\prod_{\rho=1}^2\prod_{l\in I_\rho}G(s_l-s_{l-1}, x_l-x_{l-1})\bigg)\\
&\times\prod_{(j,k)\in{\cal D}}\gamma_{\delta_{\sigma(j)}\vee\delta_{\sigma(k)}}(s_{\sigma(j)}-s_{\sigma(k)})
\gamma_{\epsilon_{\sigma(j)}+\epsilon_{\sigma(k)}}(x_{\sigma(j)}-x_{\sigma(k)})
\end{aligned}
$$
where $\sigma$ is the permutation on
$\{1, 2,\cdots, 2n\}$ such that $\sigma(l)=n-l+1$ ($l\in I_1$) and $\sigma(l)=2n-l+1$ ($l\in I_2$). Since
$\sigma(I_1)=I_1$ and $\sigma(I_2)=I_2$, the equality continues to be equal to
$$
\begin{aligned}
&\sum_{{\cal D}\in\Pi_n}\int_{[0, t]_<^n\times[0,t]_<^n}ds_1\cdots ds_{2n}\int_{(\R^d)^{2n}}dx_1\cdots dx_{2n}
\bigg(\prod_{\rho=1}^2\prod_{l\in I_\rho}G(s_l-s_{l-1}, x_l-x_{l-1})\bigg)\\
&\times\prod_{(j,k)\in{\cal D}}\gamma_{\delta_{j\vee\delta_k}}(s_j-s_k)
\gamma_{\epsilon_j+\epsilon_k}(x_j-x_k).
\end{aligned}
$$
In summary,
\begin{align}\label{S-5}
&\E S_{n,\epsilon, \delta}\big(g_n(\cdot, t, 0)\big)S_{n,\tilde{\epsilon}, \tilde{\delta}}\big(g_n(\cdot, t, 0)\big)\\
&=\sum_{{\cal D}\in\Pi_n}\int_{[0, t]_<^n\times[0,t]_<^n}ds_1\cdots ds_{2n}\int_{(\R^d)^{2n}}dx_1\cdots dx_{2n}
\bigg(\prod_{\rho=1}^2\prod_{l\in I_\rho}G(s_l-s_{l-1}, x_l-x_{l-1})\bigg)\nonumber\\
&\times\prod_{(j,k)\in{\cal D}}\gamma_{\delta_{j\vee\delta_k}}(s_j-s_k)
\gamma_{\epsilon_j+\epsilon_k}(x_j-x_k).\nonumber
\end{align}

 Let $\theta_1, \theta_2>0$ be fixed but arbitrary and set
$$
\tilde{g}_\rho(s_1,\cdots, s_n, x_1,\cdots, x_n)
=\bigg(\prod_{l=1}^ne^{-\theta_\rho (s_l-s_{l-1})}
G(s_l-s_{l-1}, x_l-x_{l-1})\bigg)1_{(\R_+)_<^n}(s_1,\cdots, s_n)
$$
with $\rho=1,2$ and the convention $s_0=0$ and $x_0=0$.
Consider the function
\begin{align}\label{S-5-1}
&F_{\bar{\epsilon}}(t_1,\cdots, t_{2n})\equiv\int_0^{t_1}\!\cdots\!\int_0^{t_{2n}}ds_1,\cdots ds_{2n}
\int_{(\R^d)^{2n}}dx_1\cdots dx_{2n}\\
&\times\tilde{g}_1(s_1,\cdots, s_n, x_1,\cdots, x_n)\tilde{g}_2(s_{n+1},\cdots, s_{2n}, x_{n+1},\cdots, x_{2n})
\prod_{(j,k)\in{\cal D}}
\gamma_{\epsilon_j+\epsilon_k}(x_j-x_k).\nonumber
\end{align}

\begin{lemma}\label{L-2} There is a constant $C$ independent of $\bar{\epsilon}$
  such that
  \begin{align}\label{S-6}
F_{\bar{\epsilon}}(+\infty,\cdots, +\infty)\le C<\infty.
\end{align}
Further, the distribution function (up to normalization) $F_{\bar{\epsilon}}$ weakly converges as $\bar{\epsilon}\to 0^+$.
\end{lemma}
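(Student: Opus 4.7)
The plan for (\ref{S-6}) is to adapt the inductive proof of Theorem \ref{th-3} with the modified Green's functions $G_l^{(\rho)}(t,x)=e^{-\theta_\rho t}G(t,x)$ ($\rho=1$ for $l\in I_1=\{1,\ldots,n\}$, $\rho=2$ for $l\in I_2=\{n+1,\ldots,2n\}$) and spatial covariances $\gamma_{j,k}(x):=\gamma_{\epsilon_j+\epsilon_k}(x)$ whose spectral measures $\mu_{j,k}(d\xi)=e^{-(\epsilon_j+\epsilon_k)|\xi|^2/2}\mu(d\xi)$ are uniformly dominated by $\mu(d\xi)$. Since $F_{\bar{\epsilon}}$ carries no time-pairing factors $\gamma^0_{j,k}$, the situation is in fact simpler than that in Theorem \ref{th-3}: the exponential damping $e^{-\theta_\rho(s_l-s_{l-1})}$ alone makes every time integral absolutely convergent, so the same inductive peeling argument yields a bound
\[
F_{\bar{\epsilon}}(+\infty,\ldots,+\infty)\leq \bigg(\prod_{l\in Q_0}\|G_l^{(\rho)}\|^{(0)}\bigg)\bigg(\prod_{l\in Q_1}2\|G_l^{(\rho)}\|_{(\cdot,\cdot)}^{(1)}\bigg)\bigg(\prod_{l\in Q_2}\|G_l^{(\rho)}\|_{(\cdot,\cdot)}^{(2)}\bigg)
\]
for a suitable partition $\{Q_0,Q_1,Q_2\}$ of $\{1,\ldots,2n\}$ obeying the combinatorial conditions of Theorem \ref{th-3}, the three norms being understood as the $\gamma^0_{j,k}$-free analogues of (\ref{D-3})--(\ref{D-5}).

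Next I would verify the uniform finiteness of these norms. By (\ref{M-11-1}), $\|G_l^{(\rho)}\|^{(0)}=\int_0^\infty t\,e^{-\theta_\rho t}\,dt=\theta_\rho^{-2}$. For the other two, using (\ref{M-11}) together with the Laplace evaluation $\int_0^\infty e^{-(\theta+\lambda)t}\sin(|\xi|t)/|\xi|\,dt=1/((\theta+\lambda)^2+|\xi|^2)$, a direct computation yields
\[
\|G_l^{(\rho)}\|_{j,k}^{(1)}\lesssim \int_{\R^d}\frac{\mu(d\xi)}{\theta_\rho^2+|\xi|^2}\qquad\text{and}\qquad \|G_l^{(\rho)}\|_{j,k}^{(2)}\lesssim \bigg(\int_{\R^d}\frac{\mu(d\xi)}{(\theta_\rho^2+|\xi|^2)^2}\bigg)^{1/2},
\]
both bounded independently of $\bar{\epsilon}$ (via $\mu_{j,k}\leq\mu$) and finite under (\ref{intro-5}) --- which, being stronger than Dalang's condition (\ref{intro-14}), implies $\int_{\R^d}\mu(d\xi)/(\theta^2+|\xi|^2)^p<\infty$ for every $p\geq 1$. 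This establishes (\ref{S-6}) with $C$ independent of $\bar{\epsilon}$.

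For the weak convergence, the uniform mass bound (\ref{S-6}) secures tightness of the normalized finite measures on $(\R_+)^{2n}$ associated with $F_{\bar{\epsilon}}$. To identify the limit I would compute the multi-parameter Laplace transform of $F_{\bar{\epsilon}}$ by inserting the Fourier representations of $\gamma_{\epsilon_j+\epsilon_k}(x_j-x_k)$ and evaluating the space-time integrations through (\ref{M-11}). The answer takes the spectral form
\[
\int\bigg(\prod_{(j,k)\in{\cal D}}e^{-(\epsilon_j+\epsilon_k)|\xi_{j,k}|^2/2}\mu(d\xi_{j,k})\bigg)\prod_{l=1}^{2n}\frac{1}{(\theta_{\rho(l)}+\tau_l)^2+|\eta_l|^2},
\]
where $\eta_l$ is an explicit linear combination of the spectral variables $\{\xi_{j,k}\}$ and $\tau_l\geq 0$ are the Laplace parameters. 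As $\bar{\epsilon}\to 0^+$, the Gaussian factors tend pointwise to $1$ while the integrable majorant supplied by the previous step permits dominated convergence, giving pointwise convergence of the Laplace transforms and hence a unique weak limit.

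The main obstacle is the combinatorial bookkeeping inside the spectral integral: each spectral variable $\xi_{j,k}$ enters several step factors $1/((\theta+\tau_l)^2+|\eta_l|^2)$, and one must verify that each $\xi_{j,k}$ meets enough factors to secure its $\mu(d\xi_{j,k})$-integration under (\ref{intro-5}) alone. This is precisely the accounting performed by Theorem \ref{th-3}'s partition $\{Q_0,Q_1,Q_2\}$, which redistributes the spectral load so that the full estimate reduces to products of the three scalar norms already computed.
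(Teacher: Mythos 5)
Your proposal reaches the right conclusions but takes a genuinely different route from the paper for the key bound (\ref{S-6}). The paper does not invoke Theorem \ref{th-3} here at all: it computes the multi-parameter Laplace transform of $F_{\bar{\epsilon}}$, uses the subordination identity (\ref{intro-16}) to replace each wave propagator $G$ by the heat kernel $p$, recognizes the resulting spatial integral as $\E_0\prod_{(j,k)\in{\cal D}}\gamma_{\epsilon_j+\epsilon_k}\big(B_{v(j)}(t_j)-B_{v(k)}(t_k)\big)$ for two independent Brownian motions, reduces the mutual-intersection factors to self-intersection local times by a spectral Cauchy--Schwarz argument, and then cites the finiteness of all moments of the exponentially time-weighted self-intersection local time (Lemma 6.1 of \cite{CH}) under Dalang's condition; monotonicity of the spectral integrand in $\bar{\epsilon}$ then gives convergence of the Laplace transforms, hence weak convergence. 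Your route instead bounds $F_{\bar{\epsilon}}(+\infty,\dots,+\infty)$ directly by a $\gamma^0_{j,k}$-free analogue of Theorem \ref{th-3}. This is viable and, if carried out, buys a self-contained deterministic argument that avoids the citation to \cite{CH}; the paper's route buys economy by reusing known intersection-local-time estimates. Your computed norms $\int\mu(d\xi)/(\theta_\rho^2+|\xi|^2)$ and $\big(\int\mu(d\xi)/(\theta_\rho^2+|\xi|^2)^2\big)^{1/2}$ are the correct $\gamma^0$-free analogues and are indeed finite already under Dalang's condition, which (\ref{intro-5}) implies.

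The one thing you must actually supply, rather than assert, is that the induction of Theorem \ref{th-3} survives the removal of the time-pairing factors. It does, and is in fact easier --- in Case 1 the monotonicity bound $\gamma^0_{j_0,2n}(s_{2n}-s_{j_0})\le\gamma^0_{j_0,2n}(s_{2n}-s_{2n-1})$ becomes unnecessary because the only coupling of $(s_{2n},x_{2n})$ to earlier variables is spatial, and Case 3's difficulty (no time ordering between inter-group indices) disappears entirely --- but since the statement of Theorem \ref{th-3} cannot be applied verbatim (one cannot choose $A_{j,k}$ so that $\gamma^0_{j,k}\equiv 1$), the modified induction has to be written out. For the weak convergence your dominated-convergence step is sound once you observe, as the paper does, that the Gaussian factors $e^{-(\epsilon_j+\epsilon_k)|\xi_{j,k}|^2/2}$ increase monotonically to $1$, so the $\bar{\epsilon}=0$ integrand is an integrable majorant by your uniform bound together with Fatou; you should also explicitly invoke the extended continuity theorem for multi-parameter Laplace transforms of finite measures (as the paper does, via \cite{Feller} and \cite{K}) to pass from convergence of transforms to weak convergence of $F_{\bar{\epsilon}}$.
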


\proof Let $\lambda_1,\cdots,\lambda_{2n}>0$ be fixed but arbitrary.
$$
\begin{aligned}&\int_{\R_+^{2n}}\exp\Big\{-\sum_{l=1}^{2n}\lambda_l t_l\Big\}F_{\bar{\epsilon}}(dt_1,\cdots, dt_{2n})\\
&=\int_{(\R^d)^{2n}}d{\bf x}
\bigg(\prod_{(j,k)\in{\cal D}}\gamma_{\epsilon_j+\epsilon_k}(x_j-x_k)\bigg)
\int_{\R_+^{2n}}\exp\Big\{-\sum_{l=1}^{2n}\lambda_l t_l\Big\}\\
&\times\tilde{g}_1(t_1,\cdots, t_n, x_1,\cdots, x_n)\tilde{g}_2(t_{n+1},\cdots, t_{2n}, x_{n+1},\cdots, x_{2n})
dt_1\cdots dt_{2n}.
\end{aligned}
$$
By our set-up
$$
\begin{aligned}
&\int_{\R_+^{2n}}\exp\Big\{-\sum_{l=1}^{2n}\lambda_l t_l\Big\}
\tilde{g}_1(t_1,\cdots, t_n, x_1,\cdots, x_n)\tilde{g}_2(t_{n+1},\cdots, t_{2n}, x_{n+1},\cdots, x_{2n})
dt_1\cdots dt_{2n}\\
&=\int_{(\R_+)_<^{n}\times(\R_+)_<^n}dt_1\cdots dt_{2n}\exp\Big\{-\sum_{l=1}^{2n}\lambda_l t_l\Big\}
\bigg(\prod_{\rho=1}^2\prod_{l\in I_\rho}e^{-\theta_\rho (t_l-t_{l-1})}
G(t_l-t_{l-1}, x_l-x_{l-1})\bigg).
\end{aligned}
$$
Write
$$
\sum_{l=1}^{2n}\lambda_l t_l=\sum_{\rho=1}^2\sum_{l\in I_\rho}\lambda_l t_l
=\sum_{\rho=1}^2\sum_{l\in I_\rho}c_l(t_l-t_{l-1})
$$
where
$$
c_l=\sum_{k=l}^n\lambda_k\hskip.1in (1\le l\le n)\hskip.1in\hbox{and}\hskip .1in
c_l=\sum_{k=n+l}^{2n}\lambda_k\hskip.1in (n+1\le l\le 2n).
$$
The right hand side is equal to
$$
\begin{aligned}
&\int_{(\R_+)_<^{n}\times(\R_+)_<^n}dt_1\cdots dt_{2n}\bigg(\prod_{\rho=1}^2\prod_{l\in I_\rho}
e^{-(c_l+\theta_\rho) (t_l-t_{l-1})}G(t_l-t_{l-1}, x_l-x_{l-1})\bigg)\\
&=\prod_{\rho=1}^2\prod_{l\in I_\rho}\int_0^\infty dt e^{-(c_l+\theta_\rho) t}
G(t, x_l-x_{l-1})\\
&=\Big({1\over 2}\Big)^{2n}\prod_{\rho=1}^2\prod_{l\in I_\rho}\int_0^\infty dt 
\exp\Big\{-{1\over 2}(c_l+\theta_\rho)^2 t\Big\}p(t, x_l-x_{l-1})\cr
&=\Big({1\over 2}\Big)^{2n}
\int_{(\R_+)_<^{n}\times(\R_+)_<^n}dt_1\cdots dt_{2n}
\prod_{\rho=1}^2\prod_{l\in I_\rho}\exp\Big\{-{1\over 2}(c_l+\theta_\rho)^2 (t_l-t_{l-1})\Big\}\Big\}
p(t_l-t_{l-1}, x_l-x_{l-1})
\end{aligned}
$$
where $p(t,x)$ is the Brownian semi-group defined in (\ref{intro-17}) and the second step follows from
the identity (\ref{intro-16}).

In summary
$$
\begin{aligned}
&\int_{\R_+^{2n}}\exp\Big\{-\sum_{l=1}^{2n}\lambda_l t_l\Big\}F_{\bar{\epsilon}}(dt_1,\cdots, dt_{2n})\cr
&=\Big({1\over 2}\Big)^{2n}
\int_{(\R_+)_<^{n}\times(\R_+)_<^n}dt_1\cdots dt_{2n}\exp\Big\{-{1\over 2}\sum_{\rho=1}^2
\sum_{l\in I_\rho}(c_l+\theta_\rho)^2 (t_l-t_{l-1})\Big\}\\
&\times\int_{(\R^d)^{2n}}d{\bf x}\bigg(\prod_{\rho=1}^2\prod_{l\in I_\rho}p(t_l-t_{l-1}, x_l-x_{l-1})\bigg)
\bigg(\prod_{(j,k)\in{\cal D}}\gamma_{\epsilon_j+\epsilon_k}(x_j-x_k)\bigg).
\end{aligned}
$$
Notice that the function
$$
f(x_1,\cdots, x_{2n})=\prod_{\rho=1}^2\prod_{l\in I_\rho}p(t_l-t_{l-1}, x_l-x_{l-1})
$$
is the density of the random vector $\big(B_1(t_1), \cdots, B_1(t_n), B_2(t_{n+1}),\cdots,B_2(t_{2n})\big)$, 
where $B_1(t)$ and $B_2(t)$ are two independent $d$-dimensional Brownian motions. Let "$\E_0$" denote the expectation of the Brownian motions.
We have
$$
\begin{aligned}
&\int_{(\R^d)^{2n}}d{\bf x}\bigg(\prod_{\rho=1}^2\prod_{l\in I_\rho}p(t_l-t_{l-1}, x_l-x_{l-1})\bigg)
\bigg(\prod_{(j,k)\in{\cal D}}\gamma_{\epsilon_j+\epsilon_k}(x_j-x_k)\bigg)\\
&=\E_0\prod_{(j,k)\in{\cal D}}\gamma_{\epsilon_j+\epsilon_k}\big(B_{v(j)}(t_j)-B_{v(k)}(t_k)\big)
\end{aligned}
$$
where the map $v$: $\{1,\cdots, 2n\}\longrightarrow \{1, 2\}$ is given by $v(I_\rho)=\{\rho\}$ ($\rho=1,2$).
By Fourier transform
$$
\begin{aligned}
&\E_0\prod_{(j,k)\in{\cal D}}\gamma_{\epsilon_j+\epsilon_k}\big(B_{v(j)}(t_j)-B_{v(k)}(t_k)\big)\\
&=\int_{(\R^d)^n}\exp\bigg\{-{1\over 2}\sum_{(j,k)\in{\cal D}}(\epsilon_j+\epsilon_k)\vert\xi_{j,k}\vert^2\bigg\}
\E_0\exp\bigg\{i\sum_{(j,k)\in{\cal D}}\xi_{j,k}\cdot \big(B_{v(j)}(t_j)-B_{v(k)}(t_k)\big)\bigg\}\\
&\times\prod_{(j,k)\in{\cal D}}\mu(d\xi_{j,k})\\
&=\int_{(\R^d)^n}\exp\bigg\{-\sum_{(j,k)\in{\cal D}}(\epsilon_j+\epsilon_k)\vert\xi_{j,k}\vert^2\bigg\}
\exp\bigg\{-{1\over 2}\Var\bigg(\sum_{(j,k)\in{\cal D}}\xi_{j,k}\cdot \big(B_{v(j)}(t_j)-B_{v(k)}(t_k)\big)\bigg)\bigg\}\\
&\times\prod_{(j,k)\in{\cal D}}\mu(d\xi_{j,k}).
\end{aligned}
$$
The right hand side is monotonic in $\epsilon_1,\cdots, \epsilon_{2n}$.
Using monotonic convergence
we conclude that the limit
\begin{align}\label{S-7}
&\lim_{\bar{\epsilon}\to 0^+}
\int_{\R_+^{2n}}\exp\Big\{-\sum_{l=1}^{2n}\lambda_l t_l\Big\}F_{\bar{\epsilon}}(dt_1,\cdots, dt_{2n})\\
&=\Big({1\over 2}\Big)^{2n}
\E_0\int_{(\R_+)_<^{n}\times(\R_+)_<^n}dt_1\cdots dt_{2n}\exp\Big\{-{1\over 2}\sum_{\rho=1}^2
\sum_{l\in I_\rho}(c_l+\theta_\rho)^2 (t_l-t_{l-1})\Big\}\nonumber\\
&\times\prod_{(j,k)\in{\cal D}}\gamma\big(B_{v(j)}(t_j)-B_{v(k)}(t_k)\big)\nonumber
\end{align}
exists as a extended real number. Further,
$$
\begin{aligned}
&\int_{\R_+^{2n}}\exp\Big\{-\sum_{l=1}^{2n}\lambda_l t_l\Big\}F_{\bar{\epsilon}}(dt_1,\cdots, dt_{2n})\cr
&\le \Big({1\over 2}\Big)^{2n}
\E_0\int_{(\R_+)_<^{n}\times(\R_+)_<^n}dt_1\cdots dt_{2n}\exp\Big\{-{1\over 2}\sum_{\rho=1}^2
\sum_{l\in I_\rho}(c_l+\theta_\rho)^2 (t_l-t_{l-1})\Big\}\\
&\times\prod_{(j,k)\in{\cal D}}\gamma\big(B_{v(j)}(t_j)-B_{v(k)}(t_k)\big)
\end{aligned}
$$

Taking $\lambda_1=\cdots =\lambda_{2n}=0$,
$$
\begin{aligned}
&F_{\bar{\epsilon}}(+\infty,\cdots, +\infty)\\
&\le \Big({1\over 2}\Big)^{2n}
  \E_0\int_{(\R_+)_<^{n}\times(\R_+)_<^n}dt_1\cdots dt_{2n}\exp\Big\{-{1\over 2}
  \sum_{\rho=1}^2
\sum_{l\in I_\rho}\theta_\rho^2 (t_l-t_{l-1})\Big\}\nonumber\\
&\times\prod_{(j,k)\in{\cal D}}\gamma\big(B_{v(j)}(t_j)-B_{v(k)}(t_k)\big)\\
  &\le \Big({1\over 2}\Big)^{2n}
    \E_0\int_{(\R_+)_<^{n}\times(\R_+)_<^n}dt_1\cdots dt_{2n}\exp\Big\{-{1\over 2}
    (\theta_1^2 t_n+\theta_2^2 t_{2n})\Big\}\nonumber\\
&\times\prod_{(j,k)\in{\cal D}}\gamma\big(B_{v(j)}(t_j)-B_{v(k)}(t_k)\big).
\end{aligned}
$$
To complete the proof, therefore, all we need is
\begin{align}\label{S-8}
  &\E_0\int_{(\R_+)_<^{n}\times(\R_+)_<^n}dt_1\cdots dt_{2n}\exp\Big\{-{1\over 2}
    (\theta_1^2 t_n+\theta_2^2 t_{2n})\Big\}\\
    &\times\prod_{(j,k)\in{\cal D}}\gamma\big(B_{v(j)}(t_j)-B_{v(k)}(t_k)\big)
    <\infty.\nonumber
  \end{align}
First notice that
\begin{align}\label{S-9} 
  & \int_{(\R_+)_<^n\times(\R_+)_<^n}dt_1\cdots dt_{2n}
    \exp\Big\{-{1\over 2}(\theta^2_1 t_n+\theta^2_2 t_{2n})\Big\}
 \prod_{(j,k)\in{\cal D}}\gamma\big(B_{v(j)}(t_j)-B_{v(k)}(t_k)\big)\\
  &=\theta_1\theta_2\int_0^\infty\!\int_0^\infty ds_1ds_2
    \exp\Big\{-{1\over 2}(\theta^2_1s_1+\theta^2_2 s_2)\Big\}
 \int_{[0, s_1]_<^n\times [0, s_2]_<^n}dt_1\cdots dt_{2n}\nonumber\\
 &\times\prod_{(j,k)\in{\cal D}}\gamma\big(B_{v(j)}(t_j)-B_{v(k)}(t_k)\big).
   \nonumber
\end{align}
Indeed, by Fubini theorem, the right hand side is equal to
$$
\begin{aligned}
  &\int_{(\R_+)_<^n\times(\R_+)_<^n}dt_1\cdots dt_{2n}\exp\Big\{-{1\over 2}(\theta^2_1 t_n+\theta^2_2 t_{2n})\Big\}
 \prod_{(j,k)\in{\cal D}}\gamma\big(B_{v(j)}(t_j)-B_{v(k)}(t_k)\big)\\
 &\times\theta_1\theta_2\int_{t_n}^\infty ds_1\int_{t_{2n}}^\infty ds_2
   \exp\Big\{-{1\over 2}(\theta_1^2 (s_1-t_n)+\theta_2^2(s_2-t_{2n})\Big\}.
   \end{aligned}
 $$
So the claim made in (\ref{S-9}) follows from
$$
\theta_1\theta_2\int_{t_n}^\infty ds_1\int_{t_{2n}}^\infty ds_2
 \exp\Big\{-{1\over 2}(\theta_1^2 (s_1-t_n)+\theta_2^2(s_2-t_{2n})\Big\}=1.
 $$
 In addition,
$$
\begin{aligned}
  &
    \int_{[0, s_1]_<^n\times [0, s_2]_<^n}dt_1\cdots dt_{2n}
    \prod_{(j,k)\in{\cal D}}\gamma\big(B_{v(j)}(t_j)-B_{v(k)}(t_k)\big)\\
 &\le 
   \int_{[0, s_1]^n\times [0, s_2]^n}dt_1\cdots dt_{2n}
   \prod_{(j,k)\in{\cal D}}\gamma\big(B_{v(j)}(t_j)-B_{v(k)}(t_k)\big)\\
  &=\prod_{(j,k)\in{\cal D}}\int_0^{s_{v(j)}}\!\int_0^{s_{v(k)}}
    \gamma\big(B_{v(j)}(t_1)-B_{v(k)}(t_2)\big) dt_1dt_2
    \end{aligned}
 $$
where the last step follows from Fubini's theorem.
For the mutual intersection local times on the right hand side (i.e.,
for the $(j,k)\in {\cal D}$ but $j$ and $k$ coming from different groups)
we treat them by Fourier transform
$$
\begin{aligned}
  &\int_0^{s_1}\!\!\int_0^{s_2}\gamma\big(B_1(t_1)-B_2(t_2)\big)dt_1dt_2\\
&=\int_{\R^d}\mu(d\xi)\int_0^{s_1}\!\!\int_0^{s_2}\exp\Big\{i\xi\cdot (B_1(t_1)-B_2(t_2))\Big\}dt_1dt_2\\
&=\int_{\R^d}\mu(d\xi)\bigg[\int_0^{s_1}e^{i\xi\cdot B_1(t)}dt\bigg]\overline{\bigg[\int_0^{s_2}e^{i\xi\cdot B_2(t)}dt\bigg]}\\
&\le\bigg\{\int_{\R^d}\mu(d\xi)\bigg\vert\int_0^{s_1}e^{i\xi\cdot B_1(t)}dt\bigg\vert^2\bigg\}^{1/2}
\bigg\{\int_{\R^d}\mu(d\xi)\bigg\vert\int_0^{s_2}e^{i\xi\cdot B_2(t)}dt\bigg\vert^2\bigg\}^{1/2}\\
  &=\bigg\{\int_0^{s_1}\!\int_0^{s_1}\gamma\big(B_1(t_1)-B_1(t_2)\big)
    dt_1dt_2\bigg\}^{1/2}
    \bigg\{\int_0^{s_2}\!\int_0^{s_2}
    \gamma\big(B_2(t_1)-B_2(t_2)\big)dt_1dt_2\bigg\}^{1/2}.
\end{aligned}
$$
We therefore have
$$
\begin{aligned}
  &\prod_{(j,k)\in{\cal D}}\int_0^{s_{v(j)}}\!\int_0^{s_{v(k)}}\gamma\big(B_{v(j)}(t_1)-B_{v(k)}(t_2)\big) dt_1dt_2\\
  &\le\bigg[\int_0^{s_1}\!\int_0^{s_1}
    \gamma\big(B_1(t_1)-B_1(t_2)\big)dt_1dt_2\bigg]^n
    \bigg[\int_0^{s_2}\!\int_0^{s_2}\gamma\big(B_2(t_1)-B_2(t_2)\big)
    dt_1dt_2\bigg]^n.
    \end{aligned}
$$
Summarizing the steps since (\ref{S-9}),
$$
\begin{aligned}
& \E_0\int_{(\R_+)_<^n\times(\R_+)_<^n}dt_1\cdots dt_{2n}\exp\Big\{-{1\over 2}(\theta^2_1 t_n+\theta^2_2 t_{2n})\Big\}
 \prod_{(j,k)\in{\cal D}}\gamma\big(B_{v(j)}(t_j)-B_{v(k)}(t_k)\big)\\
&\le \theta_1\theta_2\bigg\{\int_0^\infty
  \exp\Big\{-{\theta^2_1\over 2}s\Big\}\E_0\bigg[\int_0^{s}\!\int_0^{s}\gamma\big(B(t_1)-B(t_2)\big)dt_1dt_2\bigg]^nds\bigg\}\\
 &\times\bigg\{\int_0^\infty \exp\Big\{-{\theta^2_2\over 2}s\Big\}\E_0\bigg[\int_0^{s}\!\int_0^{s}\gamma\big(B(t_1)-B(t_2)\big)dt_1dt_2\bigg]^nds\bigg\}\\
&<\infty
  \end{aligned}
 $$
where the last step follows from (6.1), Lemma 6.1 in \cite{CH}.\qed

\begin{lemma}\label{L-3} For any ${\cal D}\in \Pi_n$, $t_1,t_2>0$ and
  $\eta>0$, the limit
$$
\begin{aligned}
&\lim_{\bar{\epsilon}\to 0^+}\int_{[0, t_1]_<^n\times[0, t_2]_<^n}ds_1\cdots ds_{2n}\int_{(\R^d)^n\times(\R^d)^n}
dx_1\cdots dx_{2n}\nonumber\\
&\times\bigg(\prod_{\rho=1}^2\prod_{l\in I_\rho}G(s_l-s_{l-1}, x_l-x_{l-1})\bigg)
\prod_{(j,k)\in{\cal D}}
\gamma_{\eta}^0(s_j-s_k)\gamma_{\epsilon_j+\epsilon_k}(x_j-x_k)
\end{aligned}
$$
exists and finite.
\end{lemma}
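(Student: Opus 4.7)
The plan is to reduce the claim to Lemma~\ref{L-2} by exploiting that, for fixed $\eta>0$, the kernel $\gamma_\eta^0$ is uniformly bounded, and then to obtain convergence through the spectral representation of $\gamma_{\epsilon_j+\epsilon_k}$, which makes the $\bar{\epsilon}$-dependence monotone.

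First I would establish uniform finiteness in $\bar{\epsilon}$. Since
$$
\gamma_\eta^0(u)\le\Gamma(\alpha_0)^{-1}\int_0^{\eta^{-1}}\lambda^{\alpha_0-1}d\lambda=:C_\eta<\infty,
$$
we have $\prod_{(j,k)\in{\cal D}}\gamma_\eta^0(s_j-s_k)\le C_\eta^n$. Using the telescoping identities $\sum_{l\in I_1}(s_l-s_{l-1})=s_n\le t_1$ on $[0,t_1]_<^n$ and $\sum_{l\in I_2}(s_l-s_{l-1})=s_{2n}\le t_2$ on $[0,t_2]_<^n$ (with the convention $s_n=0$ in the second group), for any $\theta_1,\theta_2>0$ one has
$$
\prod_{\rho=1}^2\prod_{l\in I_\rho}G(s_l-s_{l-1},x_l-x_{l-1})\le e^{\theta_1 t_1+\theta_2 t_2}\,\tilde g_1\,\tilde g_2.
$$
Extending the time integration to $(\R_+)_<^n\times(\R_+)_<^n$ then dominates the integral in Lemma~\ref{L-3} by $C_\eta^n e^{\theta_1 t_1+\theta_2 t_2}F_{\bar{\epsilon}}(+\infty,\dots,+\infty)$, which is bounded uniformly in $\bar{\epsilon}$ by Lemma~\ref{L-2}.

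For the convergence as $\bar{\epsilon}\to 0^+$, I would follow the Brownian-motion reformulation used in the proof of Lemma~\ref{L-2}. Inserting the spectral representation
$$
\gamma_{\epsilon_j+\epsilon_k}(x_j-x_k)=\int_{\R^d}e^{i\xi_{j,k}\cdot(x_j-x_k)}e^{-(\epsilon_j+\epsilon_k)|\xi_{j,k}|^2/2}\mu(d\xi_{j,k}),
$$
multiplying by the exponential weights $e^{-\theta_\rho(s_l-s_{l-1})}$, and using identity~(\ref{intro-16}) to convert each $\int_0^\infty e^{-\lambda t}G(t,x)dt$ into a Brownian transition-density integral, the spatial integration produces the joint density of $(B_1(t_1),\dots,B_1(t_n),B_2(t_{n+1}),\dots,B_2(t_{2n}))$. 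After taking the resulting Gaussian expectation, one obtains an expression of the form
$$
\int\!\!\int\cdots\bigg(\prod_{(j,k)}e^{-(\epsilon_j+\epsilon_k)|\xi_{j,k}|^2/2}\bigg)\exp\Big\{-\tfrac12\Var\Big(\sum_{(j,k)}\xi_{j,k}\cdot(B_{v(j)}(t_j)-B_{v(k)}(t_k))\Big)\Big\}\prod\mu(d\xi_{j,k})\cdots
$$
in which the entire $\bar{\epsilon}$-dependence is carried by the non-negative factor $\prod_{(j,k)}e^{-(\epsilon_j+\epsilon_k)|\xi_{j,k}|^2/2}$, which increases monotonically to $1$ as $\bar{\epsilon}\to 0^+$. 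Monotone convergence then identifies the desired limit, and its finiteness follows from the uniform domination above.

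The chief technical obstacle is to rigorously combine the bounded time-simplex integration $[0,t_\rho]_<^n$ with identity~(\ref{intro-16}), which is natively defined on $\R_+$; this forces the auxiliary weights $e^{-\theta_\rho(s_l-s_{l-1})}$ and a careful Fubini interchange between the $(s,x)$-integration, the spectral measure $\mu(d\xi_{j,k})$, and the Laplace variable hidden inside $\gamma_\eta^0$. The uniform majorant supplied by Lemma~\ref{L-2} is precisely what legitimizes all these interchanges and delivers the monotone form to which the dominated/monotone convergence theorem applies.
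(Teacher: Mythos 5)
Your uniform-finiteness step is fine: bounding $\gamma_\eta^0$ by a constant $C_\eta$ and dominating the bounded-simplex integral by $C_\eta^n e^{\theta_1 t_1+\theta_2 t_2}F_{\bar\epsilon}(+\infty,\dots,+\infty)$ is exactly the right use of Lemma \ref{L-2}. The gap is in the convergence step. The monotone structure you invoke --- the factor $\prod_{(j,k)}e^{-(\epsilon_j+\epsilon_k)\vert\xi_{j,k}\vert^2/2}$ multiplying a non-negative Gaussian variance term --- only materializes \emph{after} the wave kernel $G$ has been converted into Brownian transition densities via (\ref{intro-16}). But (\ref{intro-16}) is a Laplace-transform identity, not a pointwise relation between $G(t,x)$ and $p(t,x)$: applying it necessarily integrates out the original time variables and introduces new ones. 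Consequently your monotone-convergence argument proves convergence of the \emph{Laplace transform} $\int e^{-\sum\lambda_l t_l}F_{\bar\epsilon}(dt)$ (which is precisely the content of (\ref{S-7}) inside Lemma \ref{L-2}), not convergence of the integral over the fixed bounded simplex $[0,t_1]_<^n\times[0,t_2]_<^n$ that Lemma \ref{L-3} asserts. If instead you try to keep the original time variables and apply monotone convergence directly, the $\xi$-integrand is a product of factors $\sin(\vert\cdot\vert(s_l-s_{l-1}))/\vert\cdot\vert$, which oscillates in sign, so monotonicity of $e^{-(\epsilon_j+\epsilon_k)\vert\xi_{j,k}\vert^2/2}$ gives you nothing.

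What is missing is the bridge from Laplace-transform convergence back to pointwise convergence of the truncated integral. The paper does this by viewing ${\cal G}^{\cal D}_{\bar\epsilon}(t_1,t_2)$ as a two-parameter distribution function, invoking the continuity theorem for Laplace transforms to get weak convergence (hence convergence at continuity points of the limit ${\cal G}^{\cal D}$), and then proving that ${\cal G}^{\cal D}$ is continuous \emph{everywhere} via the uniform equicontinuity estimate
$$
\lim_{\eta_1,\eta_2\to 0^+}\sup_{\bar\epsilon}\Big\{{\cal G}^{\cal D}_{\bar\epsilon}(t_1,t_2)-{\cal G}^{\cal D}_{\bar\epsilon}(t_1-\eta_1,t_2-\eta_2)\Big\}=0,
$$
which rests on a non-trivial bound from the proof of Lemma 3.6 in \cite{CH}. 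This step cannot be absorbed into ``a careful Fubini interchange'' legitimized by the uniform majorant: the indicator of the box $[0,t_1]^n\times[0,t_2]^n$ is not a continuous test function, so weak convergence alone does not suffice, and your proposal as written has no substitute for the equicontinuity argument.
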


\proof A consequence of Lemma \ref{L-2} is that
$$
\lim_{\bar{\epsilon}\to 0^+}\int_{\R_+^{2n}}\varphi(t_1,\cdots, t_{2n})F_{\bar{\epsilon}}(dt_1,\cdots, dt_{2n})
$$
exists for any bounded, continuous and non-negative
function $\varphi(t_1,\cdots, t_{2n})$ on $\R_+^{2n}$. 
On the other hand,
similar to (\ref{S-9}),
$$
\begin{aligned}
&\int_{\R_+^{2n}}\varphi(t_1,\cdots, t_{2n})F_{\bar{\epsilon}}(dt_1,\cdots, dt_{2n})\\
&=\int_{(\R_+)_<^n\times(\R_+)_<^n}ds_1\cdots ds_{2n}\int_{(\R^d)^{2n}}d{\bf x}
\bigg(\prod_{\rho=1}^2\prod_{l\in I_\rho}e^{-\theta_\rho(s_l-s_{l-1})}G(s_l-s_{l-1}, x_l-x_{l-1})\bigg)\\
&\times\varphi(s_1,\cdots,s_{2n})\prod_{(j,k)\in{\cal D}}\gamma_{\epsilon_j+\epsilon_k}(x_j-x_k)\\
&=(\theta_1\theta_2)\int_{\R_+^2}dt_1dt_2\exp\{-\theta_1t_1-\theta_2 t_2\}
\int_{[0, t_1]_<^n\times[0,t_2]_<^n}ds_1\cdots ds_{2n}\int_{(\R^d)^{2n}}d{\bf x}\\
&\times\bigg(\prod_{\rho=1}^2\prod_{l\in I_\rho}G(s_l-s_{l-1}, x_l-x_{l-1})\bigg)
\varphi(s_1,\cdots,s_{2n})\prod_{(j,k)\in{\cal D}}\gamma_{\epsilon_j+\epsilon_k}(x_j-x_k).
\end{aligned}
$$
We reach the conclusion that the limit
$$
\begin{aligned}
&\lim_{\bar{\epsilon}\to 0^+}\int_{\R_+^2}dt_1dt_2\exp\{-\theta_1t_1-\theta_2 t_2\}
\int_{[0, t_1]_<^n\times[0,t_2]_<^n}ds_1\cdots ds_{2n}\int_{(\R^d)^{2n}}d{\bf x}\\
&\times\bigg(\prod_{\rho=1}^2\prod_{l\in I_\rho}G(s_l-s_{l-1}, x_l-x_{l-1})\bigg)
\varphi(s_1,\cdots,s_{2n})\prod_{(j,k)\in{\cal D}}\gamma_{\epsilon_j+\epsilon_k}(x_j-x_k)
\end{aligned}
$$
exists for any $\theta_1,\theta_2>0$. Set
$$
\begin{aligned}
&{\cal G}^{\cal D}_{\bar{\epsilon}}
(t_1, t_2)=\int_{[0, t_1]_<^n\times[0,t_2]_<^n}ds_1\cdots ds_{2n}\int_{(\R^d)^{2n}}d{\bf x}\\
&\times\bigg(\prod_{\rho=1}^2\prod_{l\in I_\rho}G(s_l-s_{l-1}, x_l-x_{l-1})\bigg)
\varphi(s_1,\cdots,s_{2n})\prod_{(j,k)\in{\cal D}}\gamma_{\epsilon_j+\epsilon_k}(x_j-x_k).
\end{aligned}
$$
By continuity theorem of Laplace transform again, ${\cal G}^{\cal D}_{\bar{\epsilon}}(t_1, t_2)$ weakly converges: There is
a non-decreasing and right continuous function ${\cal G}(t_1, t_2)$ such that
\begin{align}\label{S-10}
\lim_{\bar{\epsilon}\to 0^+}{\cal G}^{\cal D}_{\bar{\epsilon}}(t_1, t_2)={\cal G}^{\cal D}(t_1, t_2)
\end{align}
at every continuous point $(t_1, t_2)$ of ${\cal G}^{\cal D}(\cdot, \cdot)$.
(Actually, Theorem 5.22, \cite{K} is stated for
probability measures on $(\R^+)^d$. The case of general measures on $(\R^+)^d$
can be derived as in the proof of Theorem 2a, Section 1, Chapter 2,
\cite{Feller} Although this theorem only considers
measures on $\R_+$ its extension to $\R_+^2$ is routine).

We now claim that ${\cal G}^{\cal D}(\cdot, \cdot)$
is continuous on $\R_+\times\R_+$. Consequently, this is to say that
(\ref{S-10}) holds on $\R_+\times\R_+$ and therefore Lemma \ref{L-3} holds.
To prove it, all we need is to show that
$$
\lim_{\eta_1,\eta_2\to 0^+}
\sup_{\bar{\epsilon}}\Big\{{\cal G}^{\cal D}_{\bar{\epsilon}}(t_1, t_2)-{\cal G}^{\cal D}_{\bar{\epsilon}}(t_1-\eta_1, t_2-\eta_2)\Big\}
=0
$$
for every $(t_1,t_2)\in\R_+\times\R_+$, Or,
$$
\begin{aligned}
&\lim_{\eta_1,\eta_2\to 0^+}
\sup_{\bar{\epsilon}}\int_{([0, t_1]_<^n\times[0,t_2]_<^n)\setminus ([0, t_1-\eta_1]_<^n\times[0,t_2-\eta_2]_<^n)}
ds_1\cdots ds_{2n}\int_{(\R^d)^{2n}}d{\bf x}\\
&\times\bigg(\prod_{\rho=1}^2\prod_{l\in I_\rho}G(s_l-s_{l-1}, x_l-x_{l-1})\bigg)
\varphi(s_1,\cdots,s_{2n})\prod_{(j,k)\in{\cal D}}\gamma_{\epsilon_j+\epsilon_k}(x_j-x_k)=0.
\end{aligned}
$$
Indeed,
$$
\begin{aligned}
&\int_{([0, t_1]_<^n\times[0,t_2]_<^n)\setminus ([0, t_1-\eta_1]_<^n\times[0,t_2-\eta_2]_<^n)}
ds_1\cdots ds_{2n}\int_{(\R^d)^{2n}}d{\bf x}\\
&\times\bigg(\prod_{\rho=1}^2\prod_{l\in I_\rho}G(s_l-s_{l-1}, x_l-x_{l-1})\bigg)
\varphi(s_1,\cdots,s_{2n})\prod_{(j,k)\in{\cal D}}\gamma_{\epsilon_j+\epsilon_k}(x_j-x_k)\\
&\le\sup_{t_1,\cdots, t_{2n}}\varphi(t_1,\cdots, t_{2n})\int_{([0, t_1]_<^n\times[0,t_2]_<^n)\setminus ([0, t_1-\eta_1]_<^n\times[0,t_2-\eta_2]_<^n)}
ds_1\cdots ds_{2n}\int_{(\R^d)^{2n}}d{\bf x}\\
&\times\bigg(\prod_{\rho=1}^2\prod_{l\in I_\rho}G(s_l-s_{l-1}, x_l-x_{l-1})\bigg)
\prod_{(j,k)\in{\cal D}}\gamma_{\epsilon_j+\epsilon_k}(x_j-x_k).
\end{aligned}
$$
Thus, our claim follows from the fact (established in
the proof of Lemma 3.6, \cite{CH}) that
$$
\begin{aligned}
&\lim_{\eta_1,\eta_2\to 0^+}\sup_{\bar{\epsilon}}\int_{([0, t_1]_<^n\times[0,t_2]_<^n)\setminus ([0, t_1-\eta_1]_<^n\times[0,t_2-\eta_2]_<^n)}
ds_1\cdots ds_{2n}\int_{(\R^d)^{2n}}d{\bf x}\\
&\times\bigg(\prod_{\rho=1}^2\prod_{l\in I_\rho}G(s_l-s_{l-1}, x_l-x_{l-1})\bigg)
\prod_{(j,k)\in{\cal D}}\gamma_{\epsilon_j+\epsilon_k}(x_j-x_k)=0
\end{aligned}
$$

Thus, (\ref{S-10}) holds for every $(t_1, t_2)\in\R_+\times\R_+$. Taking
$$
\varphi(t_1,\cdots, t_{2n})=\prod_{(j,k)\in{\cal D}}\gamma_\eta^0(t_j-t_k)
$$
in (\ref{S-10}) completes the proof. \qed

\begin{lemma}\label{L-4} For any ${\cal D}\in \Pi_n$, $t_1,t_2>0$, the limit
$$
\begin{aligned}
&\lim_{\bar{\epsilon}, \bar{\delta}\to 0^+}\int_{[0, t_1]_<^n\times[0, t_2]_<^n}ds_1\cdots ds_{2n}\int_{(\R^d)^n\times(\R^d)^n}
dx_1\cdots dx_{2n}\nonumber\\
&\times\bigg(\prod_{\rho=1}^2\prod_{l\in I_\rho}G(s_l-s_{l-1}, x_l-x_{l-1})\bigg)
\prod_{(j,k)\in{\cal D}}
\gamma_{\delta_j\vee\delta_k}^0(s_j-s_k)\gamma_{\epsilon_j+\epsilon_k}(x_j-x_k)
\end{aligned}
$$
exists and finite.
\end{lemma}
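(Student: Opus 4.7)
Write $F(\bar{\epsilon},\bar{\delta})$ for the integral appearing in the statement. The plan combines three ingredients: the universal bound of Theorem \ref{th-3}, monotone convergence in $\bar{\delta}$, and the single-cutoff convergence of Lemma \ref{L-3}, with a sandwich argument to recover the joint limit.

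The first step is a uniform upper bound. I would apply Theorem \ref{th-3} to the time-restricted Green's functions $G_l(t,x)=G(t,x)1_{[0,t_\rho]}(t)$ with cutoff sets $A_{j,k}=[0,(\delta_j\vee\delta_k)^{-1}]$ and spectral measures $\mu_{j,k}(d\xi)=e^{-(\epsilon_j+\epsilon_k)|\xi|^2/2}\mu(d\xi)$. Because $A_{j,k}\subset[0,\infty)$ and $\mu_{j,k}\le\mu$, each of the norms $\|G_l\|^{(0)}$, $\|G_l\|_{j,k}^{(1)}$, $\|G_l\|_{j,k}^{(2)}$ is dominated by its ``full'' counterpart (with $A_{j,k}=[0,\infty)$ and $\mu_{j,k}=\mu$). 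These are finite under (\ref{intro-5}) by the elementary calculations deferred to the Appendix, yielding $F(\bar{\epsilon},\bar{\delta})\le C<\infty$ uniformly.

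Two directions of monotonicity are then exploited. Since the integrand is non-negative and $\gamma^0_{\delta_j\vee\delta_k}(u)$ is pointwise non-decreasing as any $\delta_l\downarrow 0$, monotone convergence yields that $F^*(\bar{\epsilon}):=\lim_{\bar{\delta}\to 0^+}F(\bar{\epsilon},\bar{\delta})$ exists and equals the integral with $|s_j-s_k|^{-\alpha_0}$ in place of $\gamma^0_{\delta_j\vee\delta_k}(s_j-s_k)$, with $F^*(\bar{\epsilon})\le C$. In the other direction, for each fixed $\bar{\delta}$ with all components positive, the test function $\varphi_{\bar{\delta}}(s):=\prod_{(j,k)\in{\cal D}}\gamma^0_{\delta_j\vee\delta_k}(s_j-s_k)$ is bounded continuous, so the argument of Lemma \ref{L-3} applies verbatim (replacing $\prod\gamma^0_\eta$ by $\varphi_{\bar{\delta}}$) to yield $F_0(\bar{\delta}):=\lim_{\bar{\epsilon}\to 0^+}F(\bar{\epsilon},\bar{\delta})$. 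By monotonicity of the pre-limit in $\bar{\delta}$, $F_0(\bar{\delta})$ is non-decreasing as each $\delta_l\downarrow 0$ and bounded by $C$, so $L:=\lim_{\bar{\delta}\to 0^+}F_0(\bar{\delta})$ exists.

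The main obstacle is to identify the joint limit with $L$. The sandwich
\[F(\bar{\epsilon},\eta^+)\le F(\bar{\epsilon},\bar{\delta})\le F(\bar{\epsilon},\eta^-), \qquad \eta^+:=\max_l\delta_l,\ \eta^-:=\min_l\delta_l,\]
together with Lemma \ref{L-3} readily gives the ``$\liminf\ge L$'' direction: for $(\bar{\epsilon}_k,\bar{\delta}_k)\to 0^+$ and any fixed $\eta>0$, eventually $\eta^+_k\le\eta$, so $F(\bar{\epsilon}_k,\bar{\delta}_k)\ge F(\bar{\epsilon}_k,\eta)\to H(\eta):=\lim_{\bar{\epsilon}\to 0^+}F(\bar{\epsilon},\eta)$, and letting $\eta\downarrow 0$ yields $\liminf_k F(\bar{\epsilon}_k,\bar{\delta}_k)\ge L$. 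The reverse inequality requires interchanging the $\bar{\epsilon}$- and $\bar{\delta}$-limits; I would handle it by a diagonal argument based on the weak convergence of the measures $F_{\bar{\epsilon}}$ on $\R_+^{2n}$ from Lemma \ref{L-2}: the uniform bound $\int\varphi_{\bar{\delta}}\,dF_{\bar{\epsilon}}\le C$ from the first step allows one to extract along any given sequence a diagonal subsequence satisfying $\int\varphi_{\bar{\delta}_k}\,d(F_{\bar{\epsilon}_k}-F)\to 0$, which combined with monotone convergence of $\int\varphi_{\bar{\delta}_k}\,dF\uparrow L$ yields $\limsup_k F(\bar{\epsilon}_k,\bar{\delta}_k)\le L$ and completes the proof.
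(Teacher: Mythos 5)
Your overall architecture (uniform bound from Theorem \ref{th-3}, monotonicity in $\bar{\delta}$, Lemma \ref{L-3} for the $\bar{\epsilon}$-limit at fixed positive cutoff, and a sandwich to assemble the joint limit) matches the paper's, and your $\liminf$ direction is correct. The gap is in the $\limsup$ direction, and it is a real one: the proposed ``diagonal argument based on weak convergence'' cannot close it. Weak convergence of $F_{\bar{\epsilon}}$ controls integrals of a \emph{fixed} bounded continuous test function, while your test functions $\varphi_{\bar{\delta}_k}$ blow up on the set $\{s_j=s_k\}$ as $\bar{\delta}_k\to 0^+$ (indeed $\gamma^0_\delta(0)=\Gamma(\alpha_0)^{-1}\alpha_0^{-1}\delta^{-\alpha_0}\to\infty$). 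A uniform bound $\int\varphi_{\bar{\delta}}\,dF_{\bar{\epsilon}}\le C$ together with weak convergence and all the monotonicity you have established is compatible with $\limsup_k\int\varphi_{\bar{\delta}_k}\,dF_{\bar{\epsilon}_k}>L$: perturb $F$ by a point mass of weight $\epsilon^{1/2}$ placed at a point $x_\epsilon$ where $\varphi_0(x_\epsilon)=\epsilon^{-1/2}$; the perturbed measures still converge weakly to $F$, the iterated limits and the uniform bound are unaffected, yet along $\bar{\delta}_k\sim\bar{\epsilon}_k$ the integrals converge to $L+1$. So no subsequence extraction can substitute for a quantitative estimate near the diagonal, and the interchange of the two limits is unjustified as written.

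What is needed --- and what the paper proves as its display (\ref{S-11}) --- is the uniform equi-integrability statement $\lim_{\eta\to0^+}\sup_{\bar{\epsilon}}\big\{{\cal G}_{\bar{\epsilon},0}(t_1,t_2)-{\cal G}_{\bar{\epsilon},\eta}(t_1,t_2)\big\}=0$, where ${\cal G}_{\bar{\epsilon},0}$ carries the full kernel $\vert s_j-s_k\vert^{-\alpha_0}$ and ${\cal G}_{\bar{\epsilon},\eta}$ the truncated one. The paper obtains this by telescoping, $\prod\gamma^0_0-\prod\gamma^0_\eta\le\sum_{(j_1,k_1)}(\gamma_0^0-\gamma_\eta^0)(s_{j_1}-s_{k_1})\prod_{(j,k)\ne(j_1,k_1)}\gamma^0_0(s_j-s_k)$, and then applying Theorem \ref{th-3} with the tail Laplace window $A_{j_1,k_1}=[\eta^{-1},\infty)$ for the distinguished pair and the full window otherwise; the Appendix computations show that $\|G\|^{(1)}_{j_1,k_1}$ and $\|G\|^{(2)}_{j_1,k_1}$ tend to $0$ as $\eta\to0^+$ uniformly over $\bar{\epsilon}$ (because $\int_{\eta^{-1}}^\infty\int_{\R^d}\lambda^{\alpha_0-1}((1+\lambda)^2+\vert\xi\vert^2)^{-1}\,d\lambda\,\mu(d\xi)\to0$ under (\ref{intro-5})), while the remaining norms stay bounded. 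Your write-up contains no estimate of this kind; supplying it is the actual content of the lemma. A minor further remark: for the uniform bound in your first step it is cleaner to damp with $e^{-\theta t}G(t,x)$ and pay a factor $e^{\theta(t_1+t_2)}$, as the paper does, rather than truncate each $G_l$ in time, since the norms $\|\cdot\|^{(1)}$ and $\|\cdot\|^{(2)}$ of the damped kernel are exactly the ones computed in the Appendix.
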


\begin{remark} In view (\ref{S-5}),  Lemma \ref{L-4} leads to the existence of the limit in (\ref{S-4}),
and therefore (Lemma \ref{L-0}) to the existence of the ${\cal L}^2$-limit in (\ref{S-2}). According to the definition given in
(\ref{M-15}), Lemma \ref{L-4} justifies the use of the notation
$$
\begin{aligned}
&\int_{[0, t_1]_<^n\times[0, t_2]_<^n}ds_1\cdots ds_{2n}\int_{(\R^d)^n\times(\R^d)^n}
dx_1\cdots dx_{2n}\nonumber\\
&\times\bigg(\prod_{\rho=1}^2\prod_{l\in I_\rho}G(s_l-s_{l-1}, x_l-x_{l-1})\bigg)
\prod_{(j,k)\in{\cal D}}\vert s_j-s_k\vert^{-\alpha_0}\gamma(x_j-x_k).
\end{aligned}
$$
Therefore, taking limit in (\ref{S-5}) leads to (\ref{S-3}). That completes the proof of Theorem \ref{th-4}
\end{remark}

\proof Let
$$
\begin{aligned}
&{\cal G}_{\bar{\epsilon}, \bar{\delta}}(t_1,t_2)=\int_{[0, t_1]_<^n\times[0, t_2]_<^n}ds_1\cdots ds_{2n}\int_{(\R^d)^n\times(\R^d)^n}
dx_1\cdots dx_{2n}\nonumber\\
&\times\bigg(\prod_{\rho=1}^2\prod_{l\in I_\rho}G(s_l-s_{l-1}, x_l-x_{l-1})\bigg)
\prod_{(j,k)\in{\cal D}}
\gamma_{\delta_j\vee\delta_k}^0(s_j-s_k)\gamma_{\epsilon_j+\epsilon_k}(x_j-x_k).
\end{aligned}
$$
For each $\eta>0$
$$
{\cal G}_{\bar{\epsilon}, \eta}(t_1,t_2)={\cal G}_{\bar{\epsilon}, \bar{\delta}}(t_1,t_2)
\Big\vert_{\delta_1=\cdots =\delta_{2n}=\eta}
$$
and
$$
\begin{aligned}
&{\cal G}_{\bar{\epsilon}, 0}(t_1,t_2)=\int_{[0, t_1]_<^n\times[0, t_2]_<^n}ds_1\cdots ds_{2n}\int_{(\R^d)^n\times(\R^d)^n}
dx_1\cdots dx_{2n}\nonumber\\
&\times\bigg(\prod_{\rho=1}^2\prod_{l\in I_\rho}G(s_l-s_{l-1}, x_l-x_{l-1})\bigg)
\prod_{(j,k)\in{\cal D}}
\vert s_j-s_k)\vert^{-\alpha_0}\gamma_{\epsilon_j+\epsilon_k}(x_j-x_k).
\end{aligned}
$$
By the monotonicity of ${\cal G}_{\bar{\epsilon}, \bar{\delta}}(t_1,t_2)$ in $\delta_1.\cdots\delta_{2n}$ and by
(\ref{intro-18}),
$$
{\cal G}_{\bar{\epsilon}, \eta}(t_1,t_2)\le {\cal G}_{\bar{\epsilon}, \bar{\delta}}(t_1,t_2)\le {\cal G}_{\bar{\epsilon}, 0}(t_1,t_2)
$$
whenever $\delta_1,\cdots,\delta_{2n}<\eta$. Therefore,
$$
\lim_{\bar{\epsilon}\to 0^+}{\cal G}_{\bar{\epsilon}, \eta}(t_1,t_2)\le \liminf_{\bar{\epsilon},\bar{\delta}\to 0^+}
{\cal G}_{\bar{\epsilon}, \bar{\delta}}(t_1,t_2)\le \liminf_{\bar{\epsilon},\bar{\delta}\to 0^+}
{\cal G}_{\bar{\epsilon}, \bar{\delta}}(t_1, t_2)\le\limsup_{\bar{\epsilon}\to 0^+}{\cal G}_{\bar{\epsilon}, 0}(t_1,t_2)
$$
where the limit on the left end is guarantteed by Lemma \ref{L-3}. To complete the proof, all we need is that
\begin{align}\label{S-11}
\lim_{\eta\to 0^+}\Big\{{\cal G}_{\bar{\epsilon}, 0}(t_1,t_2)-{\cal G}_{\bar{\epsilon}, \eta}(t_1,t_2)\Big\}=0
\hskip.1in\hbox{uniformly over $\bar{\epsilon}$}.
\end{align}
Indeed, (\ref{S-11}) implies that for any $\eta_1>0$,
$$
\limsup_{\bar{\epsilon}\to 0^+}{\cal G}_{\bar{\epsilon}, 0}(t_1,t_2)\le
\eta_1+ \lim_{\bar{\epsilon}\to 0^+}{\cal G}_{\bar{\epsilon}, \eta}(t_1,t_2)
$$
as $\eta$ is sufficiently small. Since $\displaystyle\lim_{\bar{\epsilon}\to 0^+}{\cal G}_{\bar{\epsilon}, \eta}(t_1,t_2)<\infty$
according to Lemma \ref{S-3}, in particular, 
$\displaystyle\limsup_{\bar{\epsilon}\to 0^+}{\cal G}_{\bar{\epsilon}, 0}(t_1,t_2)<\infty$.
On the other hand, by the fact that 
$\displaystyle\lim_{\bar{\epsilon}\to 0^+}{\cal G}_{\bar{\epsilon}, \eta}(t_1,t_2)$ is non-increasing in $\eta$, the limit
$$
\lim_{\eta\to 0^+}\lim_{\bar{\epsilon}\to 0^+}{\cal G}_{\bar{\epsilon}, \eta}(t_1,t_2)
$$ 
exists and is finite (as it is bounded by $\displaystyle\limsup_{\bar{\epsilon}\to 0^+}{\cal G}_{\bar{\epsilon}, 0}(t_1,t_2)$).
So we have
$$
\lim_{\eta\to 0^+}\lim_{\bar{\epsilon}\to 0^+}{\cal G}_{\bar{\epsilon}, \eta}(t_1,t_2)\le 
\liminf_{\bar{\epsilon},\bar{\delta}\to 0^+}
{\cal G}_{\bar{\epsilon}, \bar{\delta}}(t_1,t_2)\le \liminf_{\bar{\epsilon},\bar{\delta}\to 0^+}
{\cal G}_{\bar{\epsilon}, \bar{\delta}}(t_1, t_2)\le\eta_1+\lim_{\eta\to 0^+}
\lim_{\bar{\epsilon}\to 0^+}{\cal G}_{\bar{\epsilon}, \eta}(t_1,t_2)
$$
Letting $\eta_1\to 0^+$ leads to the proof of Lemma \ref{S-4}. 

It remains to prove (\ref{S-11}). For being consistent to the notation $\gamma_\delta^0(\cdot)$ for time-covariance,
by  (\ref{intro-18}) we use $\gamma_0^0(\cdot)$ instead of $\vert\cdot\vert^{-\alpha_0}$.
Notice
$$
\begin{aligned}
&\prod_{(j,k)\in{\cal D}}\gamma_{0}^0(s_j-s_k)-\prod_{(j,k)\in{\cal D}}\gamma_{\eta}^0(s_j-s_k)\\
&\le\sum_{(j_1, k_1)\in {\cal D}}(\gamma_0^0-\gamma_{\eta}^0)(s_{j_1}-s_{k_1})
\prod_{(j,k)\in{\cal D}\setminus\{(j_1, k_1)\}}\gamma_0^0(s_j-s_k).
\end{aligned}
$$
All we need is to show that for every $(j_1, k_1)\in{\cal D}$
$$
\begin{aligned}&\int_{[0,t_1]_<^{n}\times [0,t_2]_<^n} ds_1\cdots ds_{2n}
\int_{(\R^d)^{2n}}d{\bf x}
\bigg(\prod_{\rho=1}^2\prod_{l\in I_\rho}
G(s_l-s_{l-1}, x_l-x_{l-1})\bigg)\\
&\times\bigg(\prod_{(j,k)\in{\cal D}}
\gamma_{\epsilon_j+\epsilon_k}(x_j-x_k)\bigg)\bigg((\gamma_0^0-\gamma_{\eta}^0)(s_{j_1}-s_{k_1})
\prod_{(j,k)\in{\cal D}\setminus\{(j_1, k_1)\}}\gamma_0^0(s_j-s_k)\bigg)
\end{aligned}
$$
converges to 0 uniformly over $\bar{\epsilon}$.

We are in the position of using Theorem \ref{th-3}. Set
$$
\gamma^0_{j_1,k_1}(u)=(\gamma_0^0-\gamma_{\eta}^0)(\cdot)
=\Gamma(\alpha_0)^{-1}\int_{\eta^{-1}}^\infty e^{-\lambda\vert u\vert}
{d\lambda\over \lambda^{1-\alpha_0}}
$$
and
$\gamma_{j,k}^0(\cdot)=\gamma_0^0(u)$ for $(j, k)\not =(k_1,j_1)$. The above integral is
written as
$$
\begin{aligned}
&\int_{[0,t_1]_<^{n}\times [0,t_2]_<^n} ds_1\cdots ds_{2n}
\int_{(\R^d)^{2n}}d{\bf x}
\bigg(\prod_{\rho=1}^2\prod_{l\in I_\rho}
G(s_l-s_{l-1}, x_l-x_{l-1})\bigg)\cr
&\times\bigg(\prod_{(j,k)\in{\cal D}}\gamma_{j,k}^0(s_j-s_k)\gamma_{\epsilon_j+\epsilon_k}(x_j-x_k)\bigg)\\
&\le e^{t_1+t_2}\int_{(\R_+)^2}\exp\{-\tilde{t}_1-\tilde{t}_2\}\int_{[0,\tilde{t}_1]<^{n}\times [0,\tilde{t}_2]_<^n} ds_1\cdots ds_{2n}
\int_{(\R^d)^{2n}}d{\bf x}\\
&\times\bigg(\prod_{\rho=1}^2\prod_{l\in I_\rho}
G(s_l-s_{l-1}, x_l-x_{l-1})\bigg)
\bigg(\prod_{(j,k)\in{\cal D}}\gamma_{j,k}^0(s_j-s_k)\gamma_{\epsilon_j+\epsilon_k}(x_j-x_k)\bigg)\\
&=e^{t_1+t_2}\int_{(\R_+)_<^{n}\times (\R_+)_<^n} ds_1\cdots ds_{2n}
\int_{(\R^d)^{2n}}d{\bf x}\cr
&\times\bigg(\prod_{\rho=1}^2\prod_{l\in I_\rho}
e^{-(s_l-s_{l-1})}G(s_l-s_{l-1}, x_l-x_{l-1})\bigg)
\bigg(\prod_{(j,k)\in{\cal D}}\gamma_{j,k}^0(s_j-s_k)\gamma_{\epsilon_j+\epsilon_k}(x_j-x_k)\bigg).
\end{aligned}
$$
Applying Theorem \ref{th-3} to $G_l(t,x)=e^{-t}G(t,x)$ ($1\le l\le 2n$), the integral on the right hand side yields the bound
$$
\bigg(\prod_{l\in Q_0}\|G_l\|^{(0)}\bigg)\bigg(\prod_{l\in Q_1}2\|G_l\|_{(\cdot, \cdot)}^{(1)}\bigg)
\bigg(\prod_{l\in Q_2}\|G_l\|_{(\cdot, \cdot)}^{(2)}\bigg)
$$
where $Q_0$, $Q_1$, $Q_2$ form a partition of $\{1,\cdots, 2n\}$ with $\#(Q_0)=\#(Q_1)$
and $\#(Q_2)$ is even. What important to our course is that $(j_1,k_1)$ either appears
in $Q_1$-product once or in $Q_2$ twice. By (\ref{M-11-1})
$$
\|G_l\|^{(0)}=\int_0^\infty \int_{\R^d} e^{- t}G(t,x)dxdt
=\int_0^\infty t e^{- t}dt=1\hskip.2in l=1,2,\cdots 2n.
$$
The rest of the argument is to
check that $\|G_l\|_{j,k}^{(i)}$ ($i=1, 2$) are bounded uniformly over $\bar{\epsilon}$ for $(j,k)\not =(j_1,k_1)$
and $\|G_l\|_{j_1,k_1}^{(i)}\to 0^+$ ($i=1, 2$) uniformly over $\bar{\epsilon}$ as $\eta\to 0^+$.

For $(j,k)\not=(j_1,k_1)$,
$$
\begin{aligned}
\|G_l\|_{j,k}^{(1)}&=\Gamma(\alpha_0)^{-1}\int_{\R_+\times\R^d}\bigg\vert\int_0^\infty\!\!\int_{\R^d}
e^{-(\lambda+1) t+i\xi\cdot x}G(t,x)dxdt\bigg\vert {d\lambda\over \lambda^{1-\alpha_0}}
\exp\Big\{-(\epsilon_j+\epsilon_k)\vert\xi\vert^2\Big\}\mu(d\xi)\\
&\le\Gamma(\alpha_0)^{-1}\int_{\R_+\times\R^d}\bigg\vert\int_0^\infty\!\!\int_{\R^d}
e^{-(\lambda+1)t+i\xi\cdot x}G(t,x)dxdt\bigg\vert {d\lambda\over \lambda^{1-\alpha_0}}
\mu(d\xi)
\end{aligned}
$$
and
\begin{align}\label{S-12}
\|G_l\|_{j,k}^{(2)}&=\bigg(\int_{(\R_+\times\R^d)^2}\vert s-t\vert^{-\alpha_0}
\gamma_{\epsilon_j+\epsilon_k}(x-y)
e^{-s}G(s,x) e^{-t}G(t,x)dsdxdtdy\bigg)^{1/2}\\
&\le \bigg(\int_{(\R_+\times\R^d)^2}\vert s-t\vert^{-\alpha_0}
\gamma(x-y)
e^{-s}G(s,x) e^{-t}G(t,x)dsdxdtdy\bigg)^{1/2}\nonumber
\end{align}
where the inequality follows from the procedure of Fourier transform
$$
\begin{aligned}
&\int_{(\R_+\times\R^d)^2}\vert s-t\vert^{-\alpha_0}
\gamma_{\epsilon_j+\epsilon_k}(x-y)
e^{-s}G(s,x) e^{-t}G(t,x)dsdxdtdy\\
&=\int_{\R^{d+1}}\bigg\vert\int_{\R_+\times\R^d}\exp\big\{-t+i\lambda t+i\xi\cdot x\big\}G(t,x)dtdx\bigg\vert^2
\mu^0(d\lambda)\exp\{-(\epsilon_j+\epsilon_k)\vert\xi\vert^2\}\mu(d\xi)\\
&\le \int_{\R^{d+1}}\bigg\vert\int_{\R_+\times\R^d}\exp\big\{-t+i\lambda t+i\xi\cdot x\big\}G(t,x)dtdx\bigg\vert^2
\mu^0(d\lambda)\mu(d\xi)\\
&=\int_{(\R_+\times\R^d)^2}\vert s-t\vert^{-\alpha_0}
\gamma(x-y)
e^{-s}G(s,x) e^{-t}G(t,x)dsdxdtdy
\end{aligned}
$$
where $\mu^0(d\lambda)$ is the spectral measure of $\vert\cdot\vert^{-\alpha_0}$.

By  Lemma \ref{A} (with $\theta=1$), all above bounds are finite.

As for $(j,k)=(j_1,k_1)$,
$$
\|G_l\|_{j_1,k_1}^{(1)}\le\Gamma(\alpha_0)^{-1}\int_{\eta^{-1}}^\infty d\lambda\int_{\R^d}
{1\over (\lambda+1)^2+\vert\xi\vert^2}{d\lambda\over \lambda^{1-\alpha_0}}\mu(d\xi)
\to 0\hskip.2in (\eta\to 0^+),
$$
$$
\begin{aligned}
\|G_l\|_{j_1,k_1}^{(2)}&=
\bigg\{\int_{(\R_+\times\R^d)^2}\Big(\gamma_0^0-\gamma_\eta^0\Big)(s-t)
\gamma_{\epsilon_j+\epsilon_k}(x-y)e^{-t}G(t,x)e^{-s}G(s,y)dsdxdtdy\bigg\}^{1/2}\\
&\le\bigg\{\int_{\R_+\times\R_+}dsdt e^{-(s+t)}\Big(\gamma_0^0-\gamma_\eta^0\Big)(s-t)
\int_{\R^d\times\R^d}\gamma(x-y)G(t,x)G(s,y)dxdy\bigg\}^{1/2}
\end{aligned}
$$
where the inequality follows for the reason similar to the one for (\ref{S-12}).
By  (\ref{A-2}), Lemma \ref{A} (with $\theta=1$) and dominating control theorem, the right hand side
converges as $\eta\to 0^+$. \qed

\section{Proof of Theorem \ref{th-1}}\label{E}

For the part (1) of Theorem \ref{th-1}, we prove that the Stratonovich expansion (\ref{M-3}) solves
(\ref{M-1}). According to Definition \ref{d.mild_solution}, all we need is to show

\begin{enumerate}
	\item[(i)] The random  series in (\ref{M-3}) converges in ${\cal L}^2(\Omega, {\cal A},\P)$.
			\item[(ii)] The random field $\Psi(s,y)=G(t-s, x-y)u(s,y)1_{[0, t]}(s)$ is Stratonovich
			integrable and satisfies (\ref{M-1}).   
\end{enumerate}

Our way for (i) is to show
\begin{align}\label{E-1}
\sum_{n=0}^\infty \Big\{\E \big[S_n(g_n(\cdot, t, x)\big)\big]^2\Big\}^{1/2}<\infty \hskip.2in \forall t>0.
\end{align}

We start at (\ref{S-5}). Taking
$$
\epsilon=(\epsilon_1,\cdots, \epsilon_n)=(\epsilon_{n+1},\cdots \epsilon_{2n})
=\tilde{\epsilon}\hskip.1in\hbox{and}\hskip.1in
\delta=(\delta_1,\cdots, \delta_n)=(\delta_{n+1},\cdots \delta_{2n})
=\tilde{\delta}
$$
we have
\begin{align}\label{E-2}
&\int_0^\infty\!\int_0^\infty d\bar{t}_1d\bar{t}_2
\exp\Big\{-{n\over t}(\bar{t}_1+\bar{t}_2)\Big\}\E \big[S_{n,\epsilon,\delta}\big(g_n(\cdot, \bar{t}_1, 0)\big)
S_{n,\epsilon,\delta}\big(g_n(\cdot, \bar{t}_2, 0)\big)\big]\\
&=\sum_{{\cal D}\in\Pi_n}\int_0^\infty\!\int_0^\infty d\bar{t}_1d\bar{t}_2
\exp\Big\{-{n\over t}(\bar{t}_1+\bar{t}_2)\Big\}\int_{[0, \bar{t}_1]_<^n\times[0, \bar{t}_2]_<^n}ds_1\cdots ds_{2n}\int_{(\R^d)^n\times(\R^d)^n}
dx_1\cdots dx_{2n}\nonumber\\
&\times\bigg(\prod_{\rho=1}^2\prod_{l\in I_\rho}G(s_l-s_{l-1}, x_l-x_{l-1})\bigg)
\prod_{(j,k)\in{\cal D}}
\gamma_{\delta_j\vee\delta_k}^0(s_j-s_k)\gamma_{\epsilon_j+\epsilon_k}(x_j-x_k)\nonumber
\end{align}
For each ${\cal D}\in\Pi_n$,
$$
\begin{aligned}
&\int_0^\infty\!\int_0^\infty d\bar{t}_1d\bar{t}_2
\exp\Big\{-{n\over t}(\bar{t}_1+\bar{t}_2)\Big\}\int_{[0, \bar{t}_1]_<^n\times[0, \bar{t}_2]_<^n}ds_1\cdots ds_{2n}\int_{(\R^d)^n\times(\R^d)^n}
dx_1\cdots dx_{2n}\\
&\times\bigg(\prod_{\rho=1}^2\prod_{l\in I_\rho}G(s_l-s_{l-1}, x_l-x_{l-1})\bigg)
\prod_{(j,k)\in{\cal D}}
\gamma_{\delta_j\vee\delta_k}^0(s_j-s_k)\gamma_{\epsilon_j+\epsilon_k}(x_j-x_k)\\
&=\Big({t\over n}\Big)^2\int_{(\R_+)_<^n\times(\R_+)_<^n}ds_1\cdots ds_{2n}\int_{(\R^d)^n\times(\R^d)^n}
dx_1\cdots dx_{2n}\\
&\times\bigg(\prod_{\rho=1}^2\prod_{l\in I_\rho}e^{-nt^{-1}(s_l-s_{l-1})}G(s_l-s_{l-1}, x_l-x_{l-1})\bigg)
\prod_{(j,k)\in{\cal D}}
\gamma_{\delta_j\vee\delta_k}^0(s_j-s_k)\gamma_{\epsilon_j+\epsilon_k}(x_j-x_k).
\end{aligned}
$$
Applying Theorem \ref{th-3} to the right hand side we have the bound
$$
\Big({t\over n}\Big)^2\bigg(\prod_{l\in Q_0}\|G_l\|^{(0)}\bigg)\bigg(\prod_{l\in Q_1}2\|G_l\|_{(\cdot, \cdot)}^{(1)}\bigg)
\bigg(\prod_{l\in Q_2}\|G_l\|_{(\cdot, \cdot)}^{(2)}\bigg)
$$
for $G_l(\bar{t},x)=e^{-nt^{-1}\bar{t}}G(\bar{t}, x)$ ($l=1,\cdots, 2n$). Further
$$
\|G_l\|^{(0)}=\int_0^\infty \int_{\R^d}e^{-nt^{-1}\bar{t}}G(\bar{t}, x)dxd\bar{t}
=\int_0^\infty e^{-nt^{-1}\bar{t}}\bar{t}d\bar{t}=\Big({t\over n}\Big)^{2n}
$$
where the first equality follows from (\ref{M-11-1}). By (i), Lemma \ref{A-1}
$$
\begin{aligned}
\|G_l\|_{j,k}^{(1)}&=\int_{\R_+\times\R^d}\bigg\vert\int_0^\infty\!\!\int_{\R^d}
e^{-(nt^{-1}+\lambda)\bar{t}+i\xi\cdot x}G(\bar{t},x)dxd\bar{t}\bigg\vert {d\lambda\over \lambda^{1-\alpha_0}}
\exp\{-(\epsilon_j+\epsilon_k)\vert\xi\vert^2\}\mu(d\xi)\\
&\le\int_{\R_+\times\R^d}\bigg\vert\int_0^\infty\!\!\int_{\R^d}
e^{-(nt^{-1}+\lambda)\bar{t}+i\xi\cdot x}G(\bar{t},x)dxd\bar{t}\bigg\vert {d\lambda\over \lambda^{1-\alpha_0}}
\mu(d\xi)\le C<\infty.
\end{aligned}
$$
By (ii), Lemma \ref{A-1} with $\theta=nt^{-1}$,
$$
\begin{aligned}
\|G_l\|_{j,k}^{(2)}&=\bigg(\int_{(\R_+\times\R^d)^2}\vert s-t\vert^{-\alpha_0}
\gamma_{\epsilon_j+\epsilon_k}(x-y)
e^{-nt^{-1}\bar{s}}G(s,x) e^{-nt^{-1}\bar{t}}G(\bar{t},x)dsdxdtdy\bigg)^{1/2}\\
&\le \bigg(\int_{(\R_+\times\R^d)^2}\vert s-t\vert^{-\alpha_0}
\gamma(x-y)
e^{-nt^{-1}\bar{s}}G(s,x) e^{-nt^{-1}\bar{t}}G(\bar{t},x)dsdxdtdy\bigg)^{1/2}
\le C{t\over n}
\end{aligned}
$$
where the second step follows for the same reason as the one in (\ref{S-12}). 
Therefore, the bound can be rewritten as
$$
\Big({t\over n}\Big)^2\Big({t\over n}\Big)^{2\#(Q_0)}(2C)^{\#(Q_1}\Big({t\over n}\Big)^{\#(Q_2)}
\le C^n\Big({t\over n}\Big)^2\Big({t\over n}\Big)^{2(\#(Q_0)+2^{-1}\#(Q_2))}.
$$
According to Theorem \ref{th-3}, $\#(Q_0)+\#(Q_1)+\#(Q_2)=2n$ and $\#(Q_0)=\#(Q_1)$.
Therefore, $\#(Q_0)+2^{-1}\#(Q_2)=n$. In summary,
$$
\begin{aligned}
&\int_0^\infty\!\int_0^\infty d\bar{t}_1d\bar{t}_2
\exp\Big\{-{n\over t}(\bar{t}_1+\bar{t}_2)\Big\}\int_{[0, \bar{t}_1]_<^n\times[0, \bar{t}_2]_<^n}ds_1\cdots ds_{2n}\int_{(\R^d)^n\times(\R^d)^n}
dx_1\cdots dx_{2n}\\
&\times\bigg(\prod_{\rho=1}^2\prod_{l\in I_\rho}G(s_l-s_{l-1}, x_l-x_{l-1})\bigg)
\prod_{(j,k)\in{\cal D}}
\gamma_{\delta_j\vee\delta_k}^0(s_j-s_k)\gamma_{\epsilon_j+\epsilon_k}(x_j-x_k)\\
&\le C^n\Big({t\over n}\Big)^{2n+2}
\end{aligned}
$$
where the constant $C>0$ is independent of $\epsilon$, $n$, $t$ and ${\cal D}$. By (\ref{E-2}), therefore,
$$
\int_0^\infty d\bar{t}\exp\Big\{-{n\over t}\bar{t}\Big\}\E \big[S_{n,\epsilon,\delta}\big(g_n(\cdot, \bar{t}, 0)\big)\big]^2
\le \#(\Pi_n)C^n\Big({t\over n}\Big)^{2n+2}=C^n{(2n)!\over 2^n n!}
\Big({t\over n}\Big)^{2n+2}.
$$
On the other hand,
$$
\begin{aligned}
&\int_0^\infty\!\int_0^\infty d\bar{t}_1d\bar{t}_2
\exp\Big\{-{n\over t}(\bar{t}_1+\bar{t}_2)\Big\}\E \big[S_{n,\epsilon,\delta}\big(g_n(\cdot, \bar{t}_1, 0)\big)
S_{n,\epsilon,\delta}\big(g_n(\cdot, \bar{t}_2, 0)\big)\big]\\
&\ge \E \big[S_{n,\epsilon,\delta}\big(g_n(\cdot, 0)\big)\big]^2\int_t^\infty \!\int_0^\infty d\bar{t}_1d\bar{t}_2
d\bar{t}_1d\bar{t}_2
\exp\Big\{-{n\over t}(\bar{t}_1+\bar{t}_2)\Big\}\\
&=\Big({t\over n}\Big)^2e^{-2n}\E \big[S_{n,\epsilon,\delta}\big(g_n(\cdot, t, 0)\big)\big]^2.
\end{aligned}
$$
In summary, we have the bound
\begin{align}\label{E-3}
\E \big[S_{n,\epsilon,\delta}\big(g_n(\cdot, t, 0)\big)\big]^2\le C^n{(2n)!\over 2^n n!}
\Big({t\over n}\Big)^{2n}\le C^n{t^{2n}\over n!}
\end{align}
with the constant $C>0$ independent of $\epsilon=(\epsilon_1,\cdots, \epsilon_n)$, $\delta=(\delta_1,\cdots,\delta_n)$,
$n$ and $t$. Letting $\epsilon,\delta\to 0^+$. By (\ref{S-2}), therefore,
\begin{align}\label{E-4}
\E \big[S_n\big(g_n(\cdot, t, 0)\big)\big]^2\le C^n{t^{2n}\over n!}.
\end{align}
In particular, we have (\ref{E-1}).

To confirm (ii). Let $\epsilon_1, \delta_1>0$. In view of (\ref{M-3}), justified by (\ref{E-4})
\begin{align}\label{E-5}
&1+\int_0^t\!\int_{\R^d}G(t-s, y-x)u(s,y)\dot{W}_{\epsilon_1,\delta_1}(s,y)dyds\\
&=1+\sum_{n=1}^\infty\int_0^t\!\int_{\R^d}G(t-s, y-x)S_{n-1}\big(g_{n-1}(\cdot, s, y)\big)\dot{W}_{\epsilon_1,\delta_1}(s,y)dyds.\nonumber
\end{align}
justified by (\ref{E-4}). By taking the limit properly in (\ref{E-3}) 
$$
\E\bigg[\int_0^t\!\int_{\R^d}G(t-s, y-x)S_{n-1}\big(g_{n-1}(\cdot, s, y)\big)\dot{W}_{\epsilon_1,\delta_1}(s,y)dyds
\bigg]^2
\le {C^{n}\over n!}t^{2n}\hskip.2in n=0,1,\cdots.
$$
In view of the definition in (\ref{e.def_integral}),
by dominated convergence theorem, all we need is
\begin{align}\label{E-6}
\lim_{\epsilon_1,\delta_1\to 0^+}\int_0^t\!\int_{\R^d}G(t-s, y-x)S_{n-1}\big(g_{n-1}(\cdot, s, y)\big)
\dot{W}_{\epsilon_1,\delta_1}(s,y)dyds=S_n\big(g_n(\cdot, t, x)\big)
\end{align}
in ${\cal L}^2(\Omega, {\cal A}, \P)$ for each $n\ge 1$. Indeed, let
$\epsilon'=(\epsilon_2,\cdots, \epsilon_n)$ and $\delta'=(\delta_2,\cdots,\delta_n)$.
By the definition of $g_n(\cdot, t,x)$ given in (\ref{M-14}),
$$
\begin{aligned}
&\int_0^t\!\int_{\R^d}G(t-s, y-x)S_{n-1, \epsilon',\delta'}\big(g_{n-1}(\cdot, s, y)\big)
\dot{W}_{\epsilon_1,\delta_1}(s,y)dyds\\
&=\int_{(\R^+\times\R^d)^n}g_n(s_1,\cdots, s_n, x_1,\cdots, x_n, t, x)
\Big(\prod_{k=1}^nW_{\epsilon_k, \delta_k}(s_k,x_k)\Big)ds_1\cdots ds_ndx_1\cdots dx_n\\
&=S_{n, \epsilon, \delta}\big(g_n(\cdot, t, x)\big).
\end{aligned}
$$
Therefore, by (\ref{S-2}) in Theorem \ref{th-4},
$$
\begin{aligned}
&\lim_{\epsilon_1,\delta_1\to 0^+}\int_0^t\!\int_{\R^d}G(t-s, y-x)S_{n-1}\big(g_{n-1}(\cdot, s, y)\big)
\dot{W}_{\epsilon_1,\delta_1}(s,y)dyds\\
&=\lim_{\epsilon_1,\delta_1\to 0^+}\lim_{\epsilon',\delta'\to 0^+}
\int_0^t\!\int_{\R^d}G(t-s, y-x)S_{n-1}\big(g_{n-1, \epsilon',\delta'}(\cdot, s, y)\big)
\dot{W}_{\epsilon_1,\delta_1}(s,y)dyds\\
&=\lim_{\epsilon,\delta\to 0^+}S_{n, \epsilon, \delta}\big(g_n(\cdot, t, x)\big)=S_n\big(g_n(\cdot, t, x)\big)
\hskip.2in \hbox{in ${\cal L}^2(\Omega, {\cal A}, \P)$}.
\end{aligned}
$$
So we have proved (\ref{E-6}), and therefore Part (1) of Theorem \ref{th-1}.

\medskip

To prove Part (2) of Theorem \ref{th-1}, all we need is to show that the condition (\ref{intro-5}) is necessary for
	$$
	\E S_2\big(g_2(\cdot, t, 0)\big)<\infty
	$$
	with any $t>0$. Indeed, by (\ref{M-10-20})
	$$
	\begin{aligned} 
		&\E S_2\big(g_2(\cdot, t, 0)\big)\\
		&=\int_{[0,t]_<^2}ds_1ds_2\int_{(\R^d)^2}(s_2-s_1)^{-\alpha_0}\gamma(x_2-x_1)
		G(s_1, x_1)G(s_2-s_1, x_2-x_1)dx_1dx_2\\
		&=\int_{[0,t]_<^2}ds_1ds_2(s_2-s_1)^{-\alpha_0}
		\int_{(\R^d)^2}\gamma(x_2-x_1)G(s_1,x_1)G(s_2-s_1, x_2-x_1)dx_1dx_2\\
		& =\int_{[0,t]_<^2}(s_2-s_1)^{-\alpha_0}\bigg(\int_{\R^d}G(s_1,x)dx\bigg)\bigg(\int_{\R^d}\gamma(x)G(s_2-s_1, x)dx\bigg)ds_1ds_2\\
		&=\int_{[0,t]_<^2} s_1(s_2-s_1)^{-\alpha_0}\bigg[\int_{\R^d}{\sin(\vert\xi\vert (s_2-s_1))\over\vert\xi\vert}\mu(d\xi)\bigg]ds_1ds_2\\
		&=\int_{\R^d}{\mu(d\xi)\over\vert\xi\vert}\int_0^ts_1ds_1\int_0^{t-s_1}{\sin(s_2\vert\xi\vert\over s_2^{\alpha_0}}ds_2
		=\int_{\R^d}{\mu(d\xi)\over\vert\xi\vert^{2-\alpha_0}}\int_0^t s_1ds_1\int_0^{(t-s_1)\vert\xi\vert}
		{\sin s_2\over s_2^{\alpha_0}}ds_2.
\end{aligned}
	$$
Notice that
\begin{align}\label{E-7}
\int_0^a{\sin s_2\over s_2^{\alpha_0}}ds_2> 0
\end{align}
for any $0<a\le \pi$. When $a>\pi$,
$$
\int_0^a{\sin s_2\over s_2^{\alpha_0}}ds_2\ge \int_0^\pi{\sin s_2\over s_2^{\alpha_0}}ds_2+
\int_\pi^{2\pi}{\sin s_2\over s_2^{\alpha_0}}ds_2\equiv\delta>0.
$$
In particular, (\ref{E-7}) holds for any $a>0$.
 Therefore,
	$$
	\begin{aligned} 
		&\E S_2\big(g_2(\cdot, t, 0)\big)\\
&\ge \int_{\{\vert\xi\vert\ge 2\pi t^{-1}\}}{\mu(d\xi)\over\vert\xi\vert^{2-\alpha_0}}\int_0^{t/2}s_1
\bigg(\int_0^{(t-s_1)\vert\xi\vert}
		{\sin s_2\over s_2^{\alpha_0}}ds_2\bigg)ds_1\\
		&\ge\delta\int_{\{\vert\xi\vert\ge 2\pi t^{-1}\}}{\mu(d\xi)\over\vert\xi\vert^{2-\alpha_0}}\int_0^{t/2}s_1ds_1
=\delta {t^2\over 8}\int_{\{\vert\xi\vert\ge 2\pi t^{-1}\}}{\mu(d\xi)\over\vert\xi\vert^{2-\alpha_0}}.
	\end{aligned}
	$$	
	Clearly, the finiteness on the left hand side leads to  the condition (\ref{intro-5}).
\qed

\begin{remark}\label{R-3} From (\ref{M-3}) and (\ref{S-4}), we have the bound
\begin{align}\label{E-8}
\E u^2(t,x)\le e^{Ct^2}
\end{align}
for large $t$.
\end{remark}

\section{A Stratonovich moment representation}\label{B}

Let $\beta(t)$ and $B(t)$ be 1-dimensional and $d$-dimensional Brownian motions, respectively. In the rest of the 
paper, we assume the independence among $\beta(t)$, $B(t)$ and $\dot{W}(t,x)$ and use the notations $\E_0$ and
$\P_0$ for the expectation and probability with respect to $\beta(t)$ and $B(t)$ when $B(0)=0$ and $\beta(0)=0$ (which is the case for most of the time).

\begin{theorem}\label{th-5} Under the assumption (\ref{intro-5}),
\begin{align}\label{B-1}
&\int_0^\infty e^{-\theta t} \E S_{2n}\big(g_{2n}(\cdot, t,0)\big)dt\\
&={\theta\over 2}\Big({1\over 2}\Big)^{3n}{1\over n!}\int_0^\infty dt\exp\Big\{-{\theta^2\over 2}t\Big\}\cr
&\times\E_0\bigg[\int_0^t\!\int_0^t\Big(\theta\vert s-r\vert+i \big(\beta(s)-\beta(r)\big)\Big)^{-\alpha_0}
\gamma\big(B(s)-B(r)\big)dsdr
\bigg]^n\nonumber
\end{align}
for any $\theta>0$ and $n\ge 1$. In addition,
\begin{align}\label{B-2}
&\int_0^t\!\int_0^t\Big(\theta\vert s-r\vert+i \big(\beta(s)-\beta(r)\big)\Big)^{-\alpha_0}
\gamma\big(B(s)-B(r)\big)dsdr\\
&=\Gamma(\alpha_0)^{-1}
\E^\kappa\int_{\R_+\times\R^d}\bigg\vert \int_0^t\exp\Big\{i\lambda\big(\theta\kappa(s)+\beta(s)\big)+i\xi\cdot B(s)\Big\}ds\bigg\vert^2{d\lambda\over\lambda^{1-\alpha_0}}\mu(d\xi)\ge 0\nonumber
\end{align}
where $\kappa(t)$ is a standard Cauchy process independent of the Brownian motions.
\end{theorem}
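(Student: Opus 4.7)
For the second claim (\ref{B-2}), the plan is straightforward. Apply the Laplace representation (\ref{intro-18}) to $u=\theta|s-r|+i(\beta(s)-\beta(r))$, which is valid by analytic continuation since $\mathrm{Re}(u)\ge 0$, and combine with the spectral representation $\gamma(x)=\int e^{i\xi\cdot x}\mu(d\xi)$. The resulting exponent factors into a function of $s$ times the conjugate of the same function of $r$, so the double integral over $[0,t]^2$ collapses into $|\int_0^t\cdots ds|^2$. The Cauchy process $\kappa(t)$ enters via the characteristic-function identity $e^{-\lambda\theta|s-r|}=\E^\kappa e^{i\lambda\theta(\kappa(s)-\kappa(r))}$. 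This initially produces $-\beta(s)$ in the phase; to recover the stated $+\beta(s)$, use conjugation ($|z|^2=|\overline z|^2$) followed by the symmetries $\mu(A)=\mu(-A)$ (from $\gamma$ real and even) and $\kappa\eid-\kappa$ (Cauchy process symmetry) inside $\E^\kappa$. Non-negativity is immediate from the $|\cdot|^2$ structure.

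For the first claim (\ref{B-1}), I would compute both sides in closed form via Fourier/Laplace techniques and match them. From the left-hand side: by the moment formula (\ref{M-10-20}), $\int_0^\infty e^{-\theta t}\E S_{2n}(g_{2n}(\cdot,t,0))dt$ equals $\theta^{-1}\sum_{{\cal D}\in\Pi_n}$ of an integral on $(0,\infty)_<^{2n}\times(\R^d)^{2n}$ with weight $\prod_l e^{-\theta u_l}G(u_l,y_l)$, where $u_l=s_l-s_{l-1}$ and $y_l=x_l-x_{l-1}$. Inserting the spectral representation for each $\gamma(x_j-x_k)$ (noting that $x_j-x_k=-\sum_{k<l\le j}y_l$ for $j>k$) and (\ref{intro-18}) for each $|s_j-s_k|^{-\alpha_0}$ (noting that $|s_j-s_k|=\sum_{k<l\le j}u_l$) causes the integrand to factor over $l=1,\ldots,2n$. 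Integrating each $y_l$ against $G(u_l,\cdot)$ yields $\sin(u_l|\eta_l|)/|\eta_l|$ via (\ref{M-11}), where $\eta_l=\sum_{(j,k)\in{\cal D},\,k<l\le j}\xi_{jk}$; then integrating each $u_l$ on $(0,\infty)$ with weight $e^{-(\theta+\alpha_l)u_l}$ and $\alpha_l=\sum_{(j,k)\in{\cal D},\,k<l\le j}\lambda_{jk}$ gives $1/[(\theta+\alpha_l)^2+|\eta_l|^2]$. The LHS thus takes the closed form
$$
\frac{\Gamma(\alpha_0)^{-n}}{\theta}\sum_{{\cal D}\in\Pi_n}\int\prod_{(j,k)\in{\cal D}}\lambda_{jk}^{\alpha_0-1}d\lambda_{jk}\,\mu(d\xi_{jk})\,\prod_{l=1}^{2n}\frac{1}{(\theta+\alpha_l)^2+|\eta_l|^2}.
$$

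For the right-hand side, I would first use the combinatorial identity $\left[\int_0^t\!\int_0^t K(s,r)\,ds\,dr\right]^n=\frac{2^n n!}{(2n)!}\sum_{{\cal D}\in\Pi_n}\int_{[0,t]^{2n}}\prod_{(j,k)\in{\cal D}}K(t_j,t_k)\prod dt_i$, followed by $\sum_{\cal D}\int_{[0,t]^{2n}}=(2n)!\sum_{\cal D}\int_{[0,t]_<^{2n}}$ (valid because $\sum_{\cal D} F_{\cal D}$ is symmetric in the $t_i$'s). Swapping the outer $t$-integral with the ordered-simplex integral produces the factor $(2/\theta^2)e^{-\theta^2 t_{2n}/2}$. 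Expanding each $H_\theta(t_j,t_k)$ via (\ref{intro-18}) and each $\gamma$ spectrally with the same dual variables $\lambda_{jk},\xi_{jk}$, and writing everything in terms of Brownian increments $\Delta\beta_l=\beta(t_l)-\beta(t_{l-1})$ and $\Delta B_l=B(t_l)-B(t_{l-1})$, the Gaussian expectations $\E^\beta\exp\{-i\sum_l\alpha_l\Delta\beta_l\}=\prod_l e^{-\alpha_l^2 v_l/2}$ and $\E^B\exp\{i\sum_l\eta_l\cdot\Delta B_l\}=\prod_l e^{-|\eta_l|^2 v_l/2}$ combine with $e^{-\theta^2 t_{2n}/2}=\prod_l e^{-\theta^2 v_l/2}$ (where $v_l=t_l-t_{l-1}$) to give the integrand $\prod_l e^{-v_l[(\theta+\alpha_l)^2+|\eta_l|^2]/2}$. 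Integrating each $v_l$ over $(0,\infty)$ produces $\prod_l 2/[(\theta+\alpha_l)^2+|\eta_l|^2]$, and collecting the prefactors $\theta/2$, $(1/2)^{3n}$, $1/n!$, $2^n n!/(2n)!$, $(2n)!$, $2/\theta^2$, and $2^{2n}$ reproduces exactly the same closed form as for the LHS.

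The main obstacle is the bookkeeping: recognizing that the partial-sum quantities $\alpha_l$ and $\eta_l$, generated by telescoping the pair-partition indicators $\mathbf{1}_{k<l\le j}$ along the chain $l=1,\ldots,2n$, coincide on both sides, and that all the combinatorial prefactors cancel correctly. Absolute convergence justifying the Fubini exchanges is supplied by the universal moment bound of Theorem \ref{th-3} together with the ${\cal L}^2$-existence result of Theorem \ref{th-4} under (\ref{intro-5}), so the manipulations are legitimate and the two closed forms agree, establishing (\ref{B-1}).
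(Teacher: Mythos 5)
Your argument is correct and rests on the same identities as the paper's proof, but it is organized differently. The paper proves (\ref{B-1}) by \emph{transforming} the left-hand side step by step into the right-hand side: it Laplace-expands only the time factors $\vert s_j-s_k\vert^{-\alpha_0}$, telescopes the exponents along the ordered simplex, converts each $G$ into the heat kernel via (\ref{intro-16}), reads off the product of heat kernels as the law of $\big(B(s_1),\dots,B(s_{2n})\big)$ so that the space integrals become $\E_0\prod\gamma\big(B(s_j)-B(s_k)\big)$, then \emph{introduces} $\beta$ to realize $e^{-c_l^2(s_l-s_{l-1})/2}$ as $\E_0 e^{-ic_l(\beta(s_l)-\beta(s_{l-1}))}$, and finally resums the $\lambda$-integrals with the complex Laplace identity (\ref{B-5}) and the permutation/pair-partition count $\#(\Pi_n)=(2n)!/(2^nn!)$. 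You instead push the Fourier--Laplace expansion all the way on \emph{both} sides and check that they collapse to the identical rational closed form $\Gamma(\alpha_0)^{-n}\theta^{-1}\sum_{\cal D}\int\prod\lambda_{jk}^{\alpha_0-1}d\lambda_{jk}\,\mu(d\xi_{jk})\prod_l\big[(\theta+\alpha_l)^2+\vert\eta_l\vert^2\big]^{-1}$; the telescoped partial sums $\alpha_l,\eta_l$, the combinatorial prefactors, and the cancellation $e^{-\theta^2 v_l/2}e^{-\theta\alpha_l v_l}e^{-\alpha_l^2 v_l/2}=e^{-(\theta+\alpha_l)^2v_l/2}$ all check out, and your verification-style proof has the advantage that $\beta$ and $\kappa$ never have to be ``discovered,'' only confirmed. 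Your treatment of (\ref{B-2}), including the sign reconciliation via conjugation and the symmetries of $\mu$ and $\kappa$, is also fine (the paper glosses over that sign).

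One point deserves more care than your closing appeal to ``absolute convergence.'' After you replace every $\gamma(x_j-x_k)$ by $\int e^{i\xi_{jk}\cdot(x_j-x_k)}\mu(d\xi_{jk})$ and take absolute values, the $\xi$-integrand loses all decay (the measure $\mu$ is in general infinite), so Fubini cannot be justified by naive absolute integrability of the fully expanded integrand; you must either keep each $\xi_{jk}$-integral paired with the corresponding $y_l$-integrals so the decay $\big[(\theta+\alpha_l)^2+\vert\eta_l\vert^2\big]^{-1}$ is retained, or first regularize $\gamma$ by $\gamma_\epsilon$ and pass to the limit by monotonicity, which is exactly what the paper does at the outset of its proof (and what Theorem \ref{th-4} licenses). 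With that caveat addressed, the proposal is complete.
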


\begin{remark}\label{B-0} The appearance of the Brownian motion $\beta(t)$ is responsable
  for the difference in local behaviors
  between hyperbolic and parabolic equations
  when it comes to the setting of time-dependent Guassian field.
  By the well-known Feynman-Kac formula,
  $\E S_{2n}\big(g_{2n}(\cdot, t,0)\big)$  in the parabolic case
  is given as a constant multiple of
  the form
  $$
  {C^n\over n!}\E_0\bigg[\int_0^t\!\int_0^t\vert s-r\vert^{-\alpha_0}
\gamma\big(B(s)-B(r)\big)dsdr
\bigg]^n
$$
where the time-singularity contributed by the Gaussian field is measured
by $\vert s-r\vert^{-\alpha_0}$ for closed $r$ and $s$. In the hyperbolic
system, the time-singularity brought by the Gaussian field is measured
by
$$
\Big\vert\vert s-r\vert+i \big(\beta(s)-\beta(r)\big)\vert^{-\alpha_0}
\approx\Big\vert(s-r)^2+\vert s-r\vert\Big\vert^{-\alpha_0/2}
\approx \vert s-r\vert^{-\alpha_0/2}
$$
for closed $r$ and $s$. It explains, for example, how the gap between
conditions (\ref{intro-5}) and (\ref{intro-7}) of existence is created.

\end{remark}

\proof Since $\gamma(\cdot)$ may exist  as generalized function, we may encounter
some legality issue. Thanks to Theorem \ref{th-4}, we are allowed to proceed with
a point-wise defined $\gamma(\cdot)$, for otherwise we use $\gamma_\epsilon(\cdot)$
instead.

Recall (\ref{M-10-20}) that
\begin{align}\label{B-3}
\E S_{2n}\big(g_{2n}(\cdot, t,0)\big)&=\sum_{{\cal D}\in\Pi_n}\int_{[0,t]_<^{2n}}ds_1\cdots ds_{2n}
\int_{(\R^d)^{2n}}dx_1\cdots dx_{2n}\\
&\times\bigg(\prod_{l=1}^{2n}G(s_l-s_{l-1},x_l-x_{l-1})\bigg)
\prod_{(j,k)\in{\cal D}}\vert s_j-s_k\vert^{-\alpha_0}\gamma(x_j-x_k).\nonumber
\end{align}
Let ${\cal D}\in \Pi_n$ be fixed. For any $(j,k)\in {\cal D}$, we introduce the rule that $j<k$.
Consequently, by (\ref{intro-18})
$$
\prod_{(j,k)\in{\cal D}}\vert s_j-s_k\vert^{-\alpha_0}
=\big(\Gamma(\alpha_0)\big)^{-n}\int_{\R_+^n}\bigg(\prod_{(j, k)\in {\cal D}}{d\lambda_{j,k}\over \lambda_{j,k}^{1-\alpha_0}}\bigg)
\exp\bigg\{-\sum_{(j,k)\in{\cal D}}\lambda_{j,k}(s_k-s_j)\bigg\}
$$
as  $(s_1,\cdots, s_{2n})\in [0,t]_<^{2n}$. Write
$$
\sum_{(j,k)\in{\cal D}}\lambda_{j,k}(s_k-s_j)
=\sum_{l=1}^{2n}q_ls_l=\sum_{l=1}^{2n}\bigg(\sum_{i=l}^{2n}q_i\bigg)(s_l-s_{l-1})
=\sum_{l=1}^{2n}c_l(s_l-s_{l-1})
$$
where $q_l$ is equal to $\lambda_{j,k}$ or $-\lambda_{j,k}$ for some $(j,k)\in{\cal D}$. Since $j<k$,
$$
c_l=\sum_{i=l}^{2n}q_i=\sum_{j<l\le k}\lambda_{j,k}\ge 0\hskip.2in 1\le l\le 2n.
$$
We have
$$
\begin{aligned}
&\int_{[0,t]_<^{2n}}ds_1\cdots ds_{2n}\bigg(\prod_{l=1}^{2n}G(s_l-s_{l-1},x_l-x_{l-1})\bigg)
\prod_{(j,k)\in{\cal D}}\vert s_j-s_k\vert^{-\alpha_0}\\
&=\big(\Gamma(\alpha_0)\big)^{-n}\int_{\R_+^n}\prod_{(j,k)\in{\cal D}}{d\lambda_{j,k}\over\lambda_{j,k}^{1-\alpha_0}}
\int_{[0,t]_<^{2n}}ds_1\cdots ds_{2n}\prod_{l=1}^{2n}e^{-c_l(s_l-s_{l-1})}G(s_l-s_{l-1},x_l-x_{l-1}).
\end{aligned}
$$
Using the  identity (Lemma 2.2.7, p.39, \cite{Chen-1})
\begin{align}\label{B-4}
\theta\int_0^\infty e^{-\theta t}\int_{[0, t]_<^{2n}}\bigg(\prod_{l=1}^{2n}\varphi_l(s_l-s_{l-1})\bigg)
ds_1\cdots ds_{2n}=\prod_{l=1}^{2n}\int_0^\infty \varphi_l(t)dt
\end{align}
we have
$$
\begin{aligned}
&\int_0^\infty dte^{-\theta t}\int_{[0,t]_<^{2n}}ds_1\cdots ds_{2n}\bigg(\prod_{l=1}^{2n}G(s_l-s_{l-1},x_l-x_{l-1})\bigg)\prod_{(j,k)\in{\cal D}}\vert s_j-s_k\vert^{-\alpha_0}\\
&=\big(\Gamma(\alpha_0)\big)^{-n}
\theta^{-1}\int_{\R_+^n}\bigg(\prod_{(j,k)\in{\cal D}}{d\lambda_{j,k}\over \lambda_{j,k}^{1-\alpha_0}}\bigg)
\prod_{l=1}^{2n}\int_0^\infty e^{-\theta t}e^{-c_lt}G(t,x_l-x_{l-1})dt.
\end{aligned}
$$
Noticing $c_l\ge 0$, by (\ref{intro-16})
$$
\begin{aligned}
&\int_0^\infty e^{-\theta t}e^{-c_lt}G(t,x_l-x_{l-1})dt=\int_0^\infty e^{-(\theta +c_l)t}G(t,x_l-x_{l-1})dt\\
&={1\over 2}\int_0^\infty \exp\Big\{-{1\over 2}(\theta+c_l)^2 t\Big\}p(t,x_l-x_{l-1})dt\\
&={1\over 2}\int_0^\infty \exp\Big\{-{\theta^2\over 2}t\Big\}\exp\Big\{-\theta c_lt-{c_l^2\over 2}t\Big\}p(t,x_l-x_{l-1})dt
\end{aligned}
$$
where $p(t,x)$ is the Brownian semi-group introduced in (\ref{intro-17}). Thus
$$
\begin{aligned}
&\int_0^\infty dte^{-\theta t}\int_{[0,t]_<^{2n}}ds_1\cdots ds_{2n}\bigg(\prod_{l=1}^{2n}G(s_l-s_{l-1},x_l-x_{l-1})\bigg)\prod_{(j,k)\in{\cal D}}\vert s_j-s_k\vert^{-\alpha_0}\\
&=\big(\Gamma(\alpha_0)\big)^{-n}\Big({1\over 2}\Big)^{2n}
{1\over \theta}\int_{\R_+^n}\bigg(\prod_{(j,k)\in{\cal D}}{d\lambda_{j,k}\over \lambda_{j,k}^{1-\alpha_0}}\bigg)\\
&\times\prod_{l=1}^{2n}\int_0^\infty\exp\Big\{-{\theta^2\over 2}t\Big\}\exp\Big\{-\theta c_lt-{c_l^2\over 2}t\Big\}p(t,x_l-x_{l-1})dt\\
&=\big(\Gamma(\alpha_0)\big)^{-n}\Big({1\over 2}\Big)^{2n}
{\theta\over 2}\int_0^\infty dt\exp\Big\{-{\theta^2\over 2}t\Big\}\int_{\R_+^n}\bigg(\prod_{(j,k)\in{\cal D}}{d\lambda_{j,k}\over \lambda_{j,k}^{1-\alpha_0}}\bigg)\\
&\times\int_{[0,t]_<^{2n}}ds_1\cdots ds_{2n}
\prod_{l=1}^{2n}\exp\Big\{-\theta c_l(s_l-s_{l-1})-{c_l^2\over 2}(s_l-s_{l-1})\Big\}p(s_l-s_{l-1}, x_l-x_{l-1})
\end{aligned}
$$
where the last step follows from (\ref{B-4}).

Multiplying the factor
$$
\prod_{(j,k)\in{\cal D}}\gamma(x_j-x_k)
$$
and integrating over $(x_1,\cdots, x_{2n})$ on the both sides,
$$
\begin{aligned}
&\int_0^\infty dte^{-\theta t}\int_{(\R^d)^{2n}}dx_1\cdots dx_{2n}\int_{[0,t]_<^{2n}}ds_1\cdots ds_{2n}\bigg(\prod_{l=1}^{2n}G(s_l-s_{l-1},x_l-x_{l-1})\bigg)\\
&\times\prod_{(j,k)\in{\cal D}}\vert s_j-s_k\vert^{-\alpha_0}\gamma(x_j-x_k)\\
&=\big(\Gamma(\alpha_0)\big)^{-n}\Big({1\over 2}\Big)^{2n}
{\theta\over 2}\int_0^\infty dt\exp\Big\{-{\theta^2\over 2}t\Big\}
\int_{[0,t]_<^{2n}}ds_1\cdots ds_{2n}\\
&\times
\int_{\R_+^n}\bigg(\prod_{(j,k)\in{\cal D}}{d\lambda_{j,k}\over \lambda_{j,k}^{1-\alpha_0}}\bigg)
\bigg(\prod_{l=1}^{2n}\exp\Big\{-\theta c_l(s_l-s_{l-1})-{c_l^2\over 2}(s_l-s_{l-1})\Big\}\bigg)\\
&\times\int_{(\R^d)^{2n}}\bigg(\prod_{l=1}^{2n}p(s_l-s_{l-1}, x_l-x_{l-1})\bigg)
\bigg(\prod_{(j,k)\in{\cal D}}\gamma(x_j-x_k)\bigg)dx_1\cdots x_{2n}.
\end{aligned}
$$
By the fact that
$$
f(x_1,\cdots, x_{2n})=\prod_{l=1}^{2n}p(s_l-s_{l-1}, x_l-x_{l-1})
$$
is the joint density of the random vector $\big(B(s_1),\cdots, B(s_{2n})\big)$,
$$
\begin{aligned}
&\int_{(\R^d)^{2n}}\bigg(\prod_{l=1}^{2n}p(s_l-s_{l-1}, x_l-x_{l-1})\bigg)
\bigg(\prod_{(j,k)\in{\cal D}}\gamma(x_j-x_k)\bigg)dx_1\cdots x_{2n}\\
&=\E_0\prod_{(j,k)\in{\cal D}}\gamma\big(B(s_j)-B(s_k)\big).
\end{aligned}
$$
In summary,
$$
\begin{aligned}
&\int_0^\infty dte^{-\theta t}\int_{(\R^d)^{2n}}dx_1\cdots dx_{2n}\int_{[0,t]_<^{2n}}ds_1\cdots ds_{2n}\bigg(\prod_{l=1}^{2n}G(s_l-s_{l-1},x_l-x_{l-1})\bigg)\cr
&\times\prod_{(j,k)\in{\cal D}}\vert s_j-s_k\vert^{-\alpha_0}\gamma(x_j-x_k)\\
&=\big(\Gamma(\alpha_0)\big)^{-n}\Big({1\over 2}\Big)^{2n}
{\theta\over 2}\!\int_0^\infty dt\exp\Big\{-{\theta^2\over 2}t\Big\}\\
&\times
\E_0\int_{[0,t]_<^{2n}}\!ds_1\cdots ds_{2n}\bigg(\!\prod_{(j,k)\in{\cal D}}\gamma\big(B(s_j)-B(s_k)\big)
\bigg)\\
&\times
\int_{\R_+^n}\bigg(\prod_{(j,k)\in{\cal D}}{d\lambda_{j,k}\over \lambda_{j,k}^{1-\alpha_0}}\bigg)
\exp\bigg\{-\theta\sum_{l=1}^{2n}c_l(s_l-s_{l-1})-{1\over 2}\sum_{l=1}^{2n}c^2_l(s_l-s_{l-1})\bigg\}.
\end{aligned}
$$
Notice that
$$
\begin{aligned}
&\exp\bigg\{-\theta\sum_{l=1}^{2n}c_l(s_l-s_{l-1})-{1\over 2}\sum_{l=1}^{2n}c^2_l(s_l-s_{l-1})\bigg\}\\
&=\E_0\exp\bigg\{-\theta\sum_{l=1}^{2n}c_l(s_l-s_{l-1})
-i\sum_{l=1}^{2n}c_l\big(\beta(s_l)-\beta(s_{l-1})\big)\bigg\}.
\end{aligned}
$$
Recall that
$$
\sum_{l=1}^{2n}c_l(s_l-s_{l-1})=\sum_{(j,k)\in{\cal D}}\lambda_{j,k}(s_k-s_j).
$$
The same algebra leads to
$$
\sum_{l=1}^{2n}c_l\big(\beta(s_l)-\beta(s_{l-1})\big)
=\sum_{(j,k)\in{\cal D}}\lambda_{j,k}\big(\beta(s_k)-\beta(s_j)\big).
$$
By Fubini's theorem
$$
\begin{aligned}
&\int_{\R_+^n}\bigg(\prod_{(j,k)\in{\cal D}}{d\lambda_{j,k}\over \lambda_{j,k}^{1-\alpha_0}}\bigg)
\exp\bigg\{-\theta\sum_{l=1}^{2n}c_l(s_l-s_{l-1})-{1\over 2}\sum_{l=1}^{2n}c^2_l(s_l-s_{l-1})\bigg\}\\
&=\E_0\prod_{(j,k)\in{\cal D}}\int_{\R_+}\exp\Big\{-\theta\lambda(s_k-s_j)-i\lambda\big(\beta(s_k)-\beta(s_j)\big)\Big\}{d\lambda\over\lambda^{1-\alpha_0}}\\
&=\Gamma(\alpha_0)^n
\E_0\prod_{(j,k)\in{\cal D}}\Big(\theta(s_k-s_j)+i\big( \beta(s_k)-\beta(s_j)\big)\Big)^{-\alpha_0}
\end{aligned}
$$
where the last step follows from the identity (p. 183, \cite{LSL})
\begin{align}\label{B-5}
(u+iv)^{-\alpha_0}=\Gamma(\alpha_0)^{-1}\int_0^\infty e^{-\lambda (u+iv)}{d\lambda\over\lambda^{1-\alpha_0}}
\hskip.2in (u,v)\in\R_+\times\R
\end{align}
which appears to be a complex extension of (\ref{intro-18}).

Summarizing our steps,
$$
\begin{aligned}
&\int_0^\infty dte^{-\theta t}\int_{(\R^d)^{2n}}dx_1\cdots dx_{2n}
\int_{[0,t]_<^{2n}}ds_1\cdots ds_{2n}\bigg(\prod_{l=1}^{2n}G(s_l-s_{l-1},x_l-x_{l-1})\bigg)\\
&\times\prod_{(j,k)\in{\cal D}}\vert s_j-s_k\vert^{-\alpha_0}\gamma(x_j-x_k)\cr
&=\Big({1\over 2}\Big)^{2n}
{\theta\over 2}\!\int_0^\infty dt\exp\Big\{-{\theta^2\over 2}t\Big\}\\
&\times
\E_0\int_{[0,t]_<^{2n}}\!ds_1\cdots ds_{2n}\bigg(\!\prod_{(j,k)\in{\cal D}}
\Big(\theta(s_k-s_j)+i\big( \beta(s_k)-\beta(s_j)\big)\Big)^{-\alpha_0}\gamma\big(B(s_k)-B(s_j)\big)
\bigg).
\end{aligned}
$$
Summing up over ${\cal D}\in \Pi_n$,  by (\ref{B-3}),
$$
\begin{aligned}
&\int_0^\infty e^{-\theta t}\E S_{2n}\big(g_{2n}(\cdot, t, 0)\big)dt=\Big({1\over 2}\Big)^{2n}
{\theta\over 2}\!\int_0^\infty dt\exp\Big\{-{\theta^2\over 2}t\Big\}\\
&\times
\sum_{{\cal D}\in\Pi_n}\E_0\int_{[0,t]_<^{2n}}\!ds_1\cdots ds_{2n}\bigg(\!\prod_{(j,k)\in{\cal D}}
\Big(\theta(s_k-s_j)+i\big( \beta(s_k)-\beta(s_j)\big)\Big)^{-\alpha_0}\gamma\big(B(s_k)-B(s_j)\big)
\bigg).
\end{aligned}
$$
Write $(s_k-s_j)=\vert s_k-s_j\vert$ for allowing the following permutation invariance:
$$
\begin{aligned}
&\sum_{{\cal D}\in\Pi_n}\prod_{(j,k)\in{\cal D}}
\Big(\theta\vert s_{\sigma(k)}-s_{\sigma(j)}\vert+i\big( \beta(s_{\sigma(k)})-\beta(s_{\sigma(j)})\big)\Big)^{-\alpha_0}
\gamma\big(B(s_{\sigma(k)})-B(s_{\sigma(j)})\big)\\
&=\sum_{{\cal D}\in\Pi_n}\prod_{(j,k)\in{\cal D}}
\Big(\theta\vert s_k-s_j\vert+i\big( \beta(s_k)-\beta(s_j)\big)\Big)^{-\alpha_0}
\gamma\big(B(s_k)-B(s_j)\big)
\end{aligned}
$$
for any permutation $\sigma$ on $\{1,\cdots, 2n\}$. Consequently,
$$
\begin{aligned}
&\sum_{{\cal D}\in\Pi_n}\int_{[0, t]_<^{2n}}ds_1\cdots ds_{2n}\prod_{(j,k)\in{\cal D}}
\Big(\theta\vert s_k-s_j\vert+i\big( \beta(s_k)-\beta(s_j)\big)\Big)^{-\alpha_0}
\gamma\big(B(s_k)-B(s_j)\big)\\
&={1\over (2n)!}\sum_{{\cal D}\in\Pi_n}\int_{[0, t]^{2n}}ds_1\cdots ds_{2n}\prod_{(j,k)\in{\cal D}}
\Big(\theta\vert s_k-s_j\vert+i\big( \beta(s_k)-\beta(s_j)\big)\Big)^{-\alpha_0}
\gamma\big(B(s_k)-B(s_j)\big)\\
&={1\over (2n)!}\sum_{{\cal D}\in\Pi_n}\prod_{(j,k)\in{\cal D}}
\int_0^t\!\int_0^t\Big(\theta\vert s-r\vert+i\big( \beta(s)-\beta(r)\big)\Big)^{-\alpha_0}\gamma\big(B(s)-B(r)\big)dsdr\\
&={1\over (2n)!}\#(\Pi_n)
\bigg[\int_0^t\!\int_0^t\Big(\theta\vert s-r\vert+i\big( \beta(s)-\beta(r)\big)\Big)^{-\alpha_0}
\gamma\big(B(s)-B(r)\big)dsdr
\bigg]^n
\end{aligned}
$$
Therefore, (\ref{B-1}) follows from the fact that
$$
\#(\Pi_n)={(2n)!\over 2^n n!}.
$$
Finally, by (\ref{B-5})
$$
\begin{aligned}
&\Big(\theta\vert s-r\vert+i\big( \beta(s)-\beta(r)\big)\Big)^{-\alpha_0}\\
&=\Gamma(\alpha_0)
\int_{\R_+}\exp\Big\{-\theta\lambda\vert s-r\vert -i\lambda\big(\beta(s)-\beta(r)\big)\Big\}{d\lambda\over\lambda^{1-\alpha_0}}\\
&=\Gamma(\alpha_0)\E^\kappa\int_{\R_+}\exp\Big\{i\theta\lambda\big(\kappa(s)-\kappa(r)\big) -i\lambda\big(\beta(s)-\beta(r)\big)\Big\}{d\lambda\over\lambda^{1-\alpha_0}}.
\end{aligned}
$$
Therefore, (\ref{B-2}) follows from a standard use of the Fourier transform (\ref{intro-3}) of $\gamma(\cdot)$.
 \qed

\begin{remark} The monotonic order $s_1\le s_2\le\cdots\le s_{2n}$
  in the expression of $\E S_{2n}\big(g_{2n}(\cdot, t, 0)\big)$ (i.e., (\ref{B-3}))
  is a key factor
  that the proof of (\ref{B-2}) can get through. That is the major
  reason why the current idea can not work
for $\E u^p(t,x)$ for $p>1$.
\end{remark}

\section{The time-randomized intersection local time}\label{T}

We assume the assumption in Theorem \ref{th-2}, i.e, (\ref{intro-8})
with $\alpha_0+\alpha<2$ (along with other conditions required for
$\gamma(\cdot)$ to be non-negative definite).
Motivated by Theorem \ref{th-5} and by the
relation (take also (\ref{B-2}) in account)
\begin{align}\label{T-1}
0\le &\int_0^t\!\int_0^t\Big(\theta\vert s-r\vert+i \big(\beta(s)-\beta(r)\big)\Big)^{-\alpha_0}
\gamma\big(B(s)-B(r)\big)dsdr\\
&\le \int_0^t\!\int_0^t\Big\vert\theta\vert s-r\vert+i \big(\beta(s)-\beta(r)\big)\Big\vert^{-\alpha_0}
\gamma\big(B(s)-B(r)\big)dsdr\nonumber\\
&=\int_0^t\!\int_0^t\Big\vert\theta(s-r)+i \big(\beta(s)-\beta(r)\big)\Big\vert^{-\alpha_0}
\gamma\big(B(s)-B(r)\big)dsdr\nonumber
\end{align}
the maim  goal in this section is to
establish the precise large $t$ asymptotics for  the Hamiltonian on the right hand side.
The fact that the two components $(s-r)$ and
$\beta(s)-\beta(r)$ have different scaling rates destroys the homogeneity of the  Hamiltonian.
It also suggests that the contributions from $(s-r)$ and
$\beta(s)-\beta(r)$ are not equal.  Very likely, one of them  completely dominates the game.
The puzzle we face is to tell which one 
of $s-r$ and $\beta(s)-\beta(r)$ is the major player. To put all possible cards on the table we start with some
existing results in literature. First (Theorem 1.1, \cite{Chen-3}), under the Dalang's condition (\ref{intro-14})
(or (\ref{intro-8}) with $\alpha<2$),
\begin{align}\label{T-2}
\lim_{t\to\infty}{1\over t}\log\E_0\exp\bigg\{{b\over t}\int_0^t\!\int_0^t\gamma\big(B(s)-B(r)\big)dsdr\bigg\}
=b^{2\over 2-\alpha}{\cal H}\hskip.2in b>0
\end{align}
where
$$
{\cal H}=\sup_{g\in{\cal F}_d}\bigg\{\int_{\R^d\times\\R^d}\gamma(x-y)g^2(x)g^2(y)dxdy-{1\over 2}\int_{\R^d}
\vert\nabla g(x)\vert^2dx\bigg\}
$$
and ${\cal F}_d=\{g\in W^{1,2}(\R^d); \hskip.1in \|g\|_2=1\}$.

Applying this result to the augmented Brownian motion $\widetilde{B}(t)=\big(\beta(t), B(t)\Big)$
and to the augmented space covariance $\tilde{\gamma}(u,x)=\vert u\vert^{-\alpha_0}\gamma(x)$
(so (\ref{intro-8}) holds with $\tilde{\alpha}=\alpha_0+\alpha<2$),
\begin{align}\label{T-3}
\lim_{t\to\infty}{1\over t}\log\E_0\exp\bigg\{{b\over t}\int_0^t\!\int_0^t\vert \beta(s)-\beta(r)\vert^{-\alpha_0}
\gamma\big(B(s)-B(r)\big)dsdr\bigg\}
=b^{2\over 2-\alpha-\alpha_0}\widetilde{\cal H}
\end{align}
for every $b>0$. This outlines a possible scenario of "$\big(\beta(s)-\beta(r)\big)$-domination".

The  scenario of "$(s-r)$-domination" follows from the pattern (Theorem 1.1, \cite{CHSX} or, (4.10), \cite{Chen-5}) that 
\begin{align}\label{T-4}
\lim_{t\to\infty}{1\over t}\log\E_0\exp\bigg\{{b\over t^{1-\alpha_0}}\int_0^t\!\int_0^t\vert s-r\vert^{-\alpha_0}
\gamma\big(B(s)-B(r)\big)dsdr\bigg\}
=b^{2\over 2-\alpha}{\cal E}_0\hskip.2in b>0
\end{align}
where
\begin{align}\label{T-5}
{\cal E}_0&=\sup_{g\in{\cal A}_d}\bigg\{\int_0^1\!\!\int_0^1\int_{\R^d\times\R^d}{\gamma(x-y)\over \vert s-r\vert^{\alpha_0}}g^2(s, x)g^2(r, y)dxdydsdr\\
&-{1\over 2}\int_0^1\int_{\R^d}\vert \nabla_xg(s,x)\vert^2dxds\bigg\}\nonumber
\end{align}
and ${\cal A}_d=\{g(s, \cdot);\hskip.1in g(s,\cdot)\in {\cal F}_d\hskip.1in\hbox{for each $0\le s\le 1$}\}$

The result (\ref{T-4}) can not directly apply to our setting as it requires the more restrictive
assumption  "$2\alpha_0+\alpha<2$", for otherwise
the left hand side of (\ref{T-4}) blows up even before the limit is taken.
On the other hand,
${\cal E}_0<\infty$ (Lemma 5.2, \cite{Chen-5}) under $\alpha<2$ (and therefore under $\alpha_0+\alpha<2$).
Here we point out that the proof of (\ref{T-4}) (\cite{CHSX}, \cite{Chen-5}) is based on the relation
$$
{1\over t^{1-\alpha_0}}\int_0^t\!\int_0^t\vert s-r\vert^{-\alpha_0}
\gamma\big(B(s)-B(r)\big)dsdr
={1\over t}\int_0^t\!\int_0^t\Big\vert {s-r\over t}\Big\vert^{-\alpha_0}
\gamma\big(B(s)-B(r)\big)dsdr
$$
and on an argument similar to the one used for (\ref{T-2}). By a parallel ( and easier) modification of this idea
we have\footnote{Indeed, we refer an interested reader to the argument
  used Section 4.2, \cite{Chen-5} or, to the proof of (6.16) in \cite{Chen-4}
  for the proof of the upper bound; and to Section 4 and 5 in \cite{CHSX}
  for the proof of lower and upper bounds, respectively.}
that under $\alpha<2$,
\begin{align}\label{T-6}
  \lim_{t\to\infty}{1\over t}\log\E_0\exp\bigg\{{b\over t}
  \int_0^t\!\int_0^t\gamma_\delta^0\Big({s-r\over t}\Big)
\gamma\big(B(s)-B(r)\big)dsdr\bigg\}
=b^{2\over 2-\alpha}{\cal E}_\delta\hskip.2in b>0
\end{align}
for every $\delta>0$, where $\gamma_\delta^0(\cdot)$ is introduced in (\ref{M-17}) and
$$
\begin{aligned}
  {\cal E}_\delta&=\sup_{g\in{\cal A}_d}\bigg\{\int_0^1\!\!\int_0^1
              \int_{\R^d\times\R^d}\gamma_\delta^0(s-r)
\gamma(x-y)g^2(s, x)g^2(r, y)dxdydsdr\\
&-{1\over 2}\int_0^1\int_{\R^d}\vert\nabla g(x)\vert^2dxds\bigg\}.
\end{aligned}
$$

\begin{theorem}\label{th-6} Under $\alpha_0+\alpha<2$,
\begin{align}\label{T-7}
&\lim_{t\to\infty}{1\over t}
\log\E_0\exp\bigg\{{b\over t^{1-\alpha_0}}\int_0^t\!\int_0^t
\Big\vert\theta(s-r)+i\eta\big(\beta(s)-\beta(r)\big)\Big\vert^{-\alpha_0}\gamma\big(B(s)-B(r)\big)drds\bigg\}\nonumber\\
&= b^{2\over 2-\alpha}\theta^{-{2\alpha_0\over 2-\alpha}}{\cal E}_0
\end{align}
for every $b,\theta,\eta >0$.
\end{theorem}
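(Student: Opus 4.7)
The proof of (\ref{T-7}) proceeds by a scaling reduction to a unit-interval LDP, followed by matching upper and lower bounds against the regularized-time result (\ref{T-6}). The imaginary perturbation $i\eta(\beta(s)-\beta(r))$ supplies the regularization that allows the hypothesis $\alpha_0+\alpha<2$ (weaker than the $2\alpha_0+\alpha<2$ needed for (\ref{T-4})) to suffice.

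\textbf{Scaling.} Substituting $s=tu$, $r=tv$ and applying Brownian scaling $\beta(tu)\stackrel{d}{=}\sqrt{t}\beta(u)$, $B(tu)\stackrel{d}{=}\sqrt{t}B(u)$ together with the homogeneity $\gamma(\sqrt{t}x)=t^{-\alpha/2}\gamma(x)$, the argument of the exponential in (\ref{T-7}) equals in law $bt^{1-\alpha/2}J_t$ where
$$J_t:=\int_0^1\!\!\int_0^1\bigl|\theta(u-v)+i\eta t^{-1/2}(\beta(u)-\beta(v))\bigr|^{-\alpha_0}\gamma(B(u)-B(v))\,dudv.$$
The goal becomes $\lim_t t^{-1}\log\E_0\exp\{bt^{1-\alpha/2}J_t\}=b^{2/(2-\alpha)}\theta^{-2\alpha_0/(2-\alpha)}\mathcal{E}_0$, with the kernel converging pointwise to $\theta^{-\alpha_0}|u-v|^{-\alpha_0}$ for $u\ne v$.

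\textbf{Lower bound.} Using $|a+ib|^{-\alpha_0}\ge(|a|+|b|)^{-\alpha_0}$ and restricting to the high-probability event $\{\sup_{u\in[0,1]}|\beta(u)|\le M\}$ (whose exclusion is exponentially negligible at rate $t$), the integrand of $J_t$ is bounded below by $\theta^{-\alpha_0}(|u-v|+2M\eta\theta^{-1}t^{-1/2})^{-\alpha_0}$. This kernel is comparable to $\theta^{-\alpha_0}\gamma_{\delta(t)}^0(u-v)$ with $\delta(t)\to 0$. Applying (\ref{T-6}) and the monotone convergence $\mathcal{E}_\delta\uparrow\mathcal{E}_0$ as $\delta\downarrow 0$ (immediate from the variational formulas, since $\gamma_\delta^0\uparrow|u|^{-\alpha_0}$) yields the lower bound.

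\textbf{Upper bound (main obstacle).} The naive pointwise bound $\theta^{-\alpha_0}|u-v|^{-\alpha_0}$ produces an exponential moment that is infinite under our hypothesis, since (\ref{T-4}) requires the strictly stronger $2\alpha_0+\alpha<2$. To exploit the $\beta$-regularization I would use the positive mixture representation $(a^2+b^2)^{-\alpha_0/2}=\Gamma(\alpha_0/2)^{-1}\int_0^\infty e^{-\lambda(a^2+b^2)}\lambda^{\alpha_0/2-1}d\lambda$ applied to each factor in the Taylor expansion $\E_0\exp\{bt^{1-\alpha/2}J_t\}=\sum_n(bt^{1-\alpha/2})^n\E_0 J_t^n/n!$. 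After exchange of order, the $\beta$-expectation becomes a closed-form Gaussian integral
$$\E_\beta\exp\Big\{-\eta^2 t^{-1}\sum_k\lambda_k(\beta(u_k)-\beta(v_k))^2\Big\}=[\det(I+2\eta^2t^{-1}\Lambda\Sigma)]^{-1/2},$$
with $\Lambda=\mathrm{diag}(\lambda_k)$ and $\Sigma$ the covariance matrix of the increments. Integrating out $\lambda$ yields an effective nonnegative time-kernel that realizes the ``$|u-v|^{-\alpha_0}\to|u-v|^{-\alpha_0/2}$'' regularization near the diagonal observed in Remark \ref{B-0}, which is dominated by $C\theta^{-\alpha_0}\gamma_{\delta(t)}^0(u-v)$ for some $\delta(t)\to 0$. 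Combining with the $B$-moment estimate from (\ref{T-6}), resumming the series, and letting $\delta\to 0$ produces the matching upper bound. The principal difficulty is the uniform control of the determinantal factor across all configurations $(u_k,v_k,\lambda_k)$ in a form compatible with resummation and with the variational definition of $\mathcal{E}_0$; this is where the threshold $\alpha_0+\alpha<2$ enters essentially, through the sharper diagonal singularity $|u-v|^{-\alpha_0/2}$.
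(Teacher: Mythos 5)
Your lower bound is essentially the paper's argument: restrict $\beta$ to a bounded range, reduce the kernel to $\theta^{-\alpha_0}\gamma_\delta^0((s-r)/t)$, invoke (\ref{T-6}), and let $\delta\to 0^+$ so that ${\cal E}_\delta\uparrow{\cal E}_0$. (One small correction: after rescaling to $[0,1]$ the event $\{\sup_{u\le 1}|\beta(u)|\le M\}$ has a fixed positive probability, so nothing "exponentially negligible" is being discarded; the paper instead works on $[0,t]$ and pays the cost $-\pi^2/(8a^2)$ from the small-ball probability, which vanishes as $a\to\infty$.)

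The upper bound, however, has a genuine gap, and it is precisely at the point you flag as the "principal difficulty." First, the domination step cannot work as stated: $\gamma_{\delta}^0(\cdot)$ is a \emph{bounded} kernel (it is at most $\Gamma(\alpha_0)^{-1}\alpha_0^{-1}\delta^{-\alpha_0}$), so an effective kernel with a genuine $|u-v|^{-\alpha_0/2}$ singularity on the diagonal cannot be dominated by $C\theta^{-\alpha_0}\gamma_{\delta(t)}^0(u-v)$; and even if it could, (\ref{T-6}) is a limit theorem at \emph{fixed} $\delta$, so you cannot feed it a $t$-dependent $\delta(t)\to 0$ without a uniformity statement you do not have. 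Second, and more fundamentally, if the $\beta$-averaging really replaced $|u-v|^{-\alpha_0}$ by $|u-v|^{-\alpha_0/2}$ throughout, the variational constant in the answer would change (it would become an ${\cal E}_0$ built from the kernel $|s-r|^{-\alpha_0/2}$), contradicting the statement of the theorem. The regularization by $\eta(\beta(s)-\beta(r))$ is only effective at the microscopic scale $|s-r|\lesssim 1$ (i.e.\ $|u-v|\lesssim t^{-1}$ after rescaling); at macroscopic separations the kernel is genuinely $\theta^{-\alpha_0}|s-r|^{-\alpha_0}$, which is why the sharp constant is the unregularized ${\cal E}_0$. What the proof therefore needs — and what your sketch does not supply — is a mechanism showing that the near-diagonal contribution is \emph{negligible} rather than merely finite. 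The paper achieves this with a dyadic triangular decomposition of $[0,t]_<^2$: on $\sim t^{\alpha_0}$ small diagonal squares it bounds the kernel by $\eta^{-\alpha_0}|\beta(s)-\beta(r)|^{-\alpha_0}$ and uses (\ref{T-3}) (legal since $\alpha_0+\alpha<2$), producing a factor $(\epsilon b)^{2/(2-\alpha-\alpha_0)}\widetilde{\cal H}$ that vanishes as $\epsilon\to 0^+$; on the off-diagonal rectangles it bounds the kernel by $\theta^{-\alpha_0}|s-r|^{-\alpha_0}$ and controls the resulting corner singularity via Lemma \ref{T-1-1} and a H\"older resummation. This yields first a crude upper bound (Lemma \ref{T-2-1}) and then, via the splitting $|\cdot|^{-\alpha_0}=\gamma_\delta^0+\tilde\gamma_\delta^0$ and the negligibility statement (\ref{T-17}), the sharp constant. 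Your determinantal route would have to reproduce this diagonal/off-diagonal separation quantitatively, and the uniform control of $[\det(I+2\eta^2t^{-1}\Lambda\Sigma)]^{-1/2}$ over all $2n$-point configurations with $n\to\infty$ is left entirely open.
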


\begin{remark}\label{T-0}
\begin{enumerate}
\item[(1)] Theorem \ref{th-6} clearly
 highlights the pattern of (\ref{T-4}) rather than (\ref{T-3}).
  It is sharply contrary to the local behavior described in Remark \ref{B-0}.
\item[(2)] A challenge for the proof: Playing the game of (\ref{T-4})
  without the ticket ``$2\alpha_0+\alpha<2$'' for that game.
\end{enumerate}
\end{remark}

{\it Proof of the lower bound}. Given $\delta>0$,
$$
\begin{aligned}
&{1\over t^{1-\alpha_0}}
\Big\vert\theta(s-r)+i\eta\big(\beta(s)-\beta(r)\big)\Big\vert^{-\alpha_0}
={\theta^{-\alpha_0}\over t}
\Big\vert{(s-r)+i\theta^{-1}\eta\big(\beta(s)-\beta(r)\big)\over t}\Big\vert^{-\alpha_0}\\
&\ge{\theta^{-\alpha_0}\over t}\gamma_\delta^0\Big({(s-r)+i\eta\theta^{-1}\big(\beta(s)-\beta(r)\big)\over t}\Big)
\ge{\theta^{-\alpha_0}\over t}\gamma_\delta^0
\Big({\vert s-r\vert+\eta\theta^{-1}\big\vert \beta(s)-\beta(r)\big\vert\over t}\Big).
\end{aligned}
$$
Given $a>0$, on $\displaystyle\{\max_{s\le t}\vert\beta(s)\vert\le a\}$,
$$
\begin{aligned}
&\gamma_\delta^0\Big({\vert s-r\vert+\eta\theta^{-1}\big\vert \beta(s)-\beta(r)\big\vert\over t}\Big)
=\Gamma(\alpha_0)^{-1}\int_0^{\delta^{-1}}\exp\Big\{-\lambda{\vert s-r\vert+\eta\theta^{-1}\vert\beta(s)-\beta(r)\vert\over t}\Big\}
{d\lambda\over \lambda^{1-\alpha_0}}\\
&\ge\exp\Big\{-{2\theta^{-1}\eta a\over \delta t}\Big\}\Gamma(\alpha_0)^{-1}\int_0^{\delta^{-1}}
\exp\Big\{-\lambda{\vert s-r\vert\over t}\Big\}{d\lambda\over \lambda^{1-\alpha_0}}
=\exp\Big\{-{2\theta^{-1}\eta a\over \delta t}\Big\}\gamma^0_\delta\Big({s-r\over t}\Big).
\end{aligned}
$$
So we have
$$
\begin{aligned}
&\E_0\exp\bigg\{{b\over t^{1-\alpha_0}}\int_0^t\!\int_0^t
\Big\vert\theta(s-r)+i\eta\big(\beta(s)-\beta(r)\big)\Big\vert^{-\alpha_0}\gamma\big(B(s)-B(r)\big)drds\bigg\}\\
&\ge \E_0\exp\bigg\{{b\theta^{-\alpha_0}\over t}
\exp\Big\{-{2\theta^{-1}\eta a\over \delta t}\Big\}\int_0^t\!\int_0^t\gamma^0_\delta\Big({s-r\over t}\Big)\gamma\big(B(s)-B(r)\big)drds\bigg\}1_{\{\max_{s\le t}\vert\beta(s)\vert\le a\}}\\
&=\E_0\exp\bigg\{{b\theta^{-\alpha_0}\over t}\exp\Big\{-{2\eta a\over \delta t}\Big\}\int_0^t\!\int_0^t\gamma^0_\delta\Big({s-r\over t}\Big)\gamma\big(B(s)-B(r)\big)drds\bigg\}
\P_0\Big\{\max_{s\le t}\vert\beta(s)\vert\le a\Big\}
\end{aligned}
$$
where the last step follows from the independence between $\beta(\cdot)$ and $B(\cdot)$. 

From (\ref{T-6}) (with $b$ being replaced by $b\theta^{-\alpha_0}$)
$$
\begin{aligned}
&\liminf_{t\to\infty}{1\over t}\E_0\exp\bigg\{{b\theta^{-\alpha_0}\over t}\exp\Big\{-{2\theta^{-1}\eta a\over \delta t}\Big\}\int_0^t\!\int_0^t\gamma^0_\delta\Big({s-r\over t}\Big)\gamma\big(B(s)-B(r)\big)drds\bigg\}\\
&\ge b^{2\over 2-\alpha}\theta^{-{2\alpha_0\over 2-\alpha}}{\cal E}_\delta.
\end{aligned}
$$
Recall a well-known fact (e.g., (1.3), \cite{Li-Shao}) that
$$
\lim_{t\to\infty}{1\over t}\log\P_0\Big\{\max_{s\le t}\vert\beta(s)\vert\le a\Big\}
=-{\pi^2\over 8a^2}.
$$
In summary
$$
\begin{aligned}
&\liminf_{t\to\infty}{1\over t}
\log\E_0\exp\bigg\{{b\over t^{1-\alpha_0}}\int_0^t\!\int_0^t
\Big\vert\theta(s-r)+i\epsilon\big(\beta(s)-\beta(r)\big)\Big\vert^{-\alpha_0}\gamma\big(B(s)-B(r)\big)drds\bigg\}\nonumber\\
&\ge b^{2\over 2-\alpha}\theta^{-{2\alpha_0\over 2-\alpha}}{\cal E}_\delta -{\pi^2\over 8a^2}.
\end{aligned}
$$
Letting $\delta\to 0^+$ and $a\to\infty$ on the right hand side leads to the desired lower bound
\begin{align}\label{T-8}
&\liminf_{t\to\infty}{1\over t}
\log\E_0\exp\bigg\{{b\over t^{1-\alpha_0}}\int_0^t\!\int_0^t
\Big\vert\theta(s-r)+i\epsilon\big(\beta(s)-\beta(r)\big)\Big\vert^{-\alpha_0}\gamma\big(B(s)-B(r)\big)drds\bigg\}\nonumber\\
&\ge b^{2\over 2-\alpha}\theta^{-{2\alpha_0\over 2-\alpha}}{\cal E}_0.
\end{align}
\qed

To prove the upper bound of (\ref{T-7}), it requires several steps. First, we prove

\begin{lemma}\label{T-1-1}
  Let $B_1(t)$ and $B_2(t)$ be two independent $d$-dimensional Brownian
motions. Under $\alpha <2$,  there is a constant $C>0$ such that
\begin{align}\label{T-9}
\limsup_{t\to\infty}{1\over t}\log
\E_0\exp\bigg\{bt^{2-\alpha\over 2}\int_0^{1}\!\!\int_0^{1}(s+r)^{-\alpha_0}
\gamma\big(B_1(s)+B_2(s)\big)dsdr\bigg\}\le Cb^{2\over 2-\alpha}
\end{align}
for any $b>0$.
\end{lemma}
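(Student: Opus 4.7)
The plan is to collapse the $r$-integration, use Brownian scaling to absorb the prefactor $t^{(2-\alpha)/2}$, and finish with a Feynman--Kac/variational bound.

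First, because the integrand $\gamma(B_1(s)+B_2(s))$ has no $r$-dependence and $\alpha_0<1$,
$$\int_0^1(s+r)^{-\alpha_0}\,dr=\frac{(s+1)^{1-\alpha_0}-s^{1-\alpha_0}}{1-\alpha_0}\le\frac{1}{1-\alpha_0},\qquad s\in[0,1],$$
so the double integral is dominated by $(1-\alpha_0)^{-1}\int_0^1\gamma(B_1(s)+B_2(s))\,ds$. Noting that $B_1+B_2\stackrel{d}{=}\sqrt{2}\,B$ for a single $d$-dimensional Brownian motion $B$, together with $\gamma(\sqrt{2}\,x)=2^{-\alpha/2}\gamma(x)$, matters reduce to proving
$$\limsup_{t\to\infty}\frac{1}{t}\log\E_0\exp\Bigl\{c_0\,b\,t^{(2-\alpha)/2}\!\int_0^1\gamma(B(s))\,ds\Bigr\}\le Cb^{2/(2-\alpha)},\qquad c_0:=\frac{2^{-\alpha/2}}{1-\alpha_0}.$$

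Next, Brownian scaling $B(s)\stackrel{d}{=}t^{-1/2}\widetilde{B}(ts)$ combined with $\gamma(t^{-1/2}x)=t^{\alpha/2}\gamma(x)$ gives
$$\int_0^1\gamma(B(s))\,ds\stackrel{d}{=}t^{\alpha/2-1}\int_0^t\gamma(\widetilde{B}(u))\,du,$$
and since $(2-\alpha)/2+(\alpha/2-1)=0$, the prefactor $t^{(2-\alpha)/2}$ cancels exactly, reducing the task to showing $\E_0\exp\{c_0 b\int_0^t\gamma(\widetilde{B}(u))\,du\}\le\exp(Ct\,b^{2/(2-\alpha)})$. For this I would invoke the Feynman--Kac estimate $\E_0\exp\{\int_0^t V(\widetilde{B}(u))\,du\}\le e^{t\lambda(V)}$ with
$$\lambda(V)=\sup\Bigl\{\int Vg^2\,dx-\tfrac12\int|\nabla g|^2\,dx:\,g\in W^{1,2}(\R^d),\,\|g\|_2=1\Bigr\},$$
and the dilation $g(x)\mapsto a^{d/2}g(ax)$ with $a=(c_0 b)^{1/(2-\alpha)}$, together with $\gamma(x/a)=a^\alpha\gamma(x)$, produces the homogeneity $\lambda(c_0 b\gamma)=(c_0 b)^{2/(2-\alpha)}\lambda(\gamma)$, delivering the claim with $C=c_0^{2/(2-\alpha)}\lambda(\gamma)$.

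The hard part will be justifying $\lambda(\gamma)<\infty$ under only $\alpha<2$. When $\gamma$ is a pointwise function this follows from a Hardy/Gagliardo--Nirenberg-type estimate $\int\gamma g^2\le C\|\nabla g\|_2^\alpha\|g\|_2^{2-\alpha}$, after which the scalar expression $Cy^\alpha-y^2/2$ with $y=\|\nabla g\|_2$ has a finite maximum. For distributional $\gamma$ (for instance $\gamma=\delta_0$ in $d=1$) I would instead work with the mollification $\gamma_\epsilon$ of Section~\ref{W}, check that $\lambda(\gamma_\epsilon)$ stays bounded uniformly in $\epsilon$ via the same spectral-measure ingredient that underlies the finiteness of the constant ${\cal H}$ attached to (\ref{T-2}), and pass to the limit by monotone convergence to recover the desired bound for $\gamma$ itself.
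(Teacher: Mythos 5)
Your argument hinges on reading the integrand literally as $\gamma\big(B_1(s)+B_2(s)\big)$, so that the $r$-variable enters only through the kernel $(s+r)^{-\alpha_0}$ and the $r$-integration can be collapsed into the constant $(1-\alpha_0)^{-1}$. That reading exploits what is evidently a typo in the statement: where the lemma is actually invoked (in the proof of Lemma \ref{T-2-1}, after the substitutions $r\mapsto 1-r$, $s\mapsto 1+s$ that manufacture the two independent Brownian motions $B_1,B_2$ out of increments of $B$), the quantity to be controlled is
\[
\int_0^1\!\!\int_0^1(s+r)^{-\alpha_0}\,\gamma\big(B_1(s)+B_2(r)\big)\,ds\,dr,
\]
a kernel-weighted \emph{mutual} intersection local time, and the paper's own proof of the lemma works with $B_2(r)$ throughout. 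For that object your first step fails outright: the integrand depends on $r$ through $B_2(r)$, the $r$-integral cannot be collapsed, and $B_1(s)+B_2(r)$ is not $\sqrt2\,B(s)$. The subsequent reduction to the one-parameter functional $\int_0^1\gamma(B(s))\,ds$ and the Feynman--Kac/variational bound therefore do not apply, and the genuine difficulty of the lemma is bypassed rather than resolved.

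The missing idea is the treatment of the corner singularity of $(s+r)^{-\alpha_0}$ at $(0,0)$. Writing $H(t)=\int_0^t\!\int_0^t(s+r)^{-\alpha_0}\gamma(B_1(s)+B_2(r))\,ds\,dr$, the paper splits $[0,t]^2=[0,ct]^2\cup\big([0,t]^2\setminus[0,ct]^2\big)$ and applies Cauchy--Schwarz. By Brownian scaling $H(ct)\stackrel{d}{=}c^{(4-\alpha-2\alpha_0)/2}H(t)$, and choosing $c$ with $2c^{(4-\alpha-2\alpha_0)/2}=1$ makes the corner factor equal to the quantity being bounded, so it can be reabsorbed into the left-hand side (a self-improving inequality); on the complement one has $s+r\ge ct$, hence $(s+r)^{-\alpha_0}\le(ct)^{-\alpha_0}$, the kernel is removed at the cost of a factor matching the normalization, and the bound reduces to the known asymptotics (\ref{T-2}) for $\int_0^t\!\int_0^t\gamma(B_1(s)+B_2(r))\,ds\,dr$. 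None of this appears in your proposal, and without it the exponent $t^{(2-\alpha)/2}$ in (\ref{T-9}) cannot be reconciled with the natural scaling $t^{(4-\alpha-2\alpha_0)/2}$ of $H(t)$. As a secondary point, even under your literal reading, $\E_0\exp\{\int_0^tV(B(u))\,du\}\le e^{t\lambda(V)}$ is not a valid pointwise-in-$t$ inequality for singular homogeneous potentials without further work; only the $\limsup$ version holds, and justifying it for generalized $\gamma$ requires an argument of the type in \cite{Chen-2}, not merely monotone convergence in the mollification parameter.
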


\proof Perform
the decomposition
$$
\int_0^t\!\!\int_0^t(s+r)^{-\alpha_0}
\gamma\big(B_1(s)+B_2(s)\big)dsdr=\int_0^{ct}\!\!\int_0^{ct}+\int\!\!\int_{[0,t]^2\setminus[0, ct]^2}
$$
where the constant $0<c<1$ will be specified later. By Cauchy-Schwartz inequality,
$$
\begin{aligned}
&\E_0\exp\bigg\{{b\over t^{1-\alpha_0}}\int_0^t\!\int_0^t
(s+r)^{-\alpha_0}\gamma\big(B_1(s)+B_2(r)\big)drds\bigg\}\\
&\le\Bigg(\E_0\exp\bigg\{{2b\over t^{1-\alpha_0}}\int_0^{ct}\!\!\int_0^{ct}
(s+r)^{-\alpha_0}\gamma\big(B_1(s)+B_2(r)\big)drds\bigg\}\Bigg)^{1/2}\\
&\times\Bigg(\E_0\exp\bigg\{{2b\over t^{1-\alpha_0}}\int\!\!\int_{[0,t]^2\setminus[0, ct]^2}
(s+r)^{-\alpha_0}\gamma\big(B_1(s)+B_2(r)\big)drds\bigg\}\Bigg)^{1/2}.
\end{aligned}
$$
Notice that
$$
\int_0^{ct}\!\int_0^{ct}
(s+r)^{-\alpha_0}\gamma\big(B_1(s)+B_2(r)\big)drds
\buildrel d\over =c^{4-\alpha-2\alpha_0\over 2}
\int_0^t\!\int_0^t
(s+r)^{-\alpha_0}\gamma\big(B_1(s)+B_2(r)\big)drds.
$$
Choose $c$ such that
$$
2c^{4-\alpha-2\alpha_0\over 2}=1.
$$
So we have 
$$
\begin{aligned}
&\E_0\exp\bigg\{{b\over t^{1-\alpha_0}}\int_0^t\!\int_0^t
(s+r)^{-\alpha_0}\gamma\big(B_1(s)+B_2(r)\big)drds\bigg\}\\
&\le\Bigg(\E_0\exp\bigg\{{b\over t^{1-\alpha_0}}\int_0^{t}\!\!\int_0^{t}
(s+r)^{-\alpha_0}\gamma\big(B_1(s)+B_2(r)\big)drds\bigg\}\Bigg)^{1/2}\\
&\times\Bigg(\E_0\exp\bigg\{{2b\over t^{1-\alpha_0}}\int\!\!\int_{[0,t]^2\setminus[0, ct]^2}
(s+r)^{-\alpha_0}\gamma\big(B_1(s)+B_2(r)\big)drds\bigg\}\Bigg)^{1/2}.
\end{aligned}
$$
which leads to
\begin{align}\label{T-10}
&\E_0\exp\bigg\{{b\over t^{1-\alpha_0}}\int_0^t\!\int_0^t
(s+r)^{-\alpha_0}\gamma\big(B_1(s)+B_2(r)\big)drds\bigg\}\\
&\le \E_0\exp\bigg\{{2b\over t^{1-\alpha_0}}\int\!\!\int_{[0,t]^2\setminus[0, ct]^2}
(s+r)^{-\alpha_0}\gamma\big(B_1(s)+B_2(r)\big)drds\bigg\}.\nonumber
\end{align}
By the fact that $s+r\ge ct$ on $[0,t]^2\setminus[0, ct]^2$,
$$
\begin{aligned}
&\E_0\exp\bigg\{{2b\over t^{1-\alpha_0}}\int\!\!\int_{[0,t]^2\setminus[0, ct]^2}
(s+r)^{-\alpha_0}\gamma\big(B_1(s)+B_2(r)\big)drds\\
&\le \E_0\exp\bigg\{{2b\over t^{1-\alpha_0}}(ct)^{-\alpha_0}\int_0^t\!\!\int_0^t
\gamma\big(B_1(s)+B_2(r)\big)drds\bigg\}\\
&=\E_0\exp\bigg\{{2bc^{-\alpha_0}\over t}\int_0^t\!\!\int_0^t
\gamma\big(B_1(s)+B_2(r)\big)drds\bigg\}.
\end{aligned}
$$
By (\ref{T-2}), therefore,
$$
\limsup_{t\to\infty}{1\over t}\log \E_0\exp\bigg\{{2b\over t^{1-\alpha_0}}\int\!\!\int_{[0,t]^2\setminus[0, ct]^2}
(s+r)^{-\alpha_0}\gamma\big(B_1(s)+B_2(r)\big)drds\bigg\}
\le (2b)^{2\over 2-\alpha}{\cal H}.
$$
In view of (\ref{T-10}), 
$$
\limsup_{t\to\infty}{1\over t}\log \E_0\exp\bigg\{{b\over t^{1-\alpha_0}}\int_0^t\!\int_0^t
(s+r)^{-\alpha_0}\gamma\big(B_1(s)+B_2(r)\big)drds\bigg\}\le 2^{\alpha\over 2-\alpha}b^{2\over 2-\alpha}{\cal H}.
$$
Finally, the proof follows from the identity in law:
$$
\int_0^t\!\int_0^t
(s+r)^{-\alpha_0}\gamma\big(B_1(s)+B_2(r)\big)drds\buildrel d\over =
t^{4-\alpha-\alpha_0\over 2}\int_0^1\!\int_0^1
(s+r)^{-\alpha_0}\gamma\big(B_1(s)+B_2(r)\big)drds.
$$
\qed

Next, we establish a weaker version of the upper bound

\begin{lemma}\label{T-2-1} Under $\alpha_0+\alpha<2$, there is a constant $C>0$ such that
\begin{align}\label{T-11}
&\limsup_{t\to\infty}{1\over t}
\log\E_0\exp\bigg\{{b\over t^{1-\alpha_0}}\int_0^t\!\int_0^t
\Big\vert\theta(s-r)+i\eta\big(\beta(s)-\beta(r)\big)\Big\vert^{-\alpha_0}\gamma\big(B(s)-B(r)\big)drds\bigg\}\nonumber\\
&\le Cb^{2\over 2-\alpha}\theta^{-{2\alpha_0\over 2-\alpha}}
\end{align}
for any  $b>0$, $\theta>0$ and $\eta>0$.
\end{lemma}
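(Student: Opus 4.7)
My plan is to use the crude modulus bound to eliminate the $\beta$ and $\eta$ dependence, then reduce what remains to a form already handled by (\ref{T-4}) and Lemma \ref{T-1-1}. Since $\theta^2(s-r)^2+\eta^2(\beta(s)-\beta(r))^2\ge\theta^2(s-r)^2$, one has
$$|\theta(s-r)+i\eta(\beta(s)-\beta(r))|^{-\alpha_0}\le\theta^{-\alpha_0}|s-r|^{-\alpha_0}.$$
With $c=b\theta^{-\alpha_0}$, Lemma \ref{T-2-1} thus reduces to showing
$$\limsup_{t\to\infty}\frac{1}{t}\log\E_0\exp\bigg\{\frac{c}{t^{1-\alpha_0}}\int_0^t\!\!\int_0^t|s-r|^{-\alpha_0}\gamma\big(B(s)-B(r)\big)dsdr\bigg\}\le Cc^{2/(2-\alpha)},$$
the right-hand side being exactly $Cb^{2/(2-\alpha)}\theta^{-2\alpha_0/(2-\alpha)}$.

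In the regime $2\alpha_0+\alpha<2$, this is immediate: the integral inside the exponent has finite exponential moments, so (\ref{T-4}) applies directly and yields the bound with $C={\cal E}_0$. In the harder regime $2\alpha_0+\alpha\ge 2$ (still under $\alpha_0+\alpha<2$), the trivial bound produces an integral that is non-integrable near the diagonal, so the $\beta$-factor must be retained there to provide the $|s-r|^{1/2}$-smoothing. My plan in that case is to introduce a small neighbourhood $D_\delta=\{|s-r|<\delta\}$ of the diagonal, keep $\beta$ inside the modulus on $D_\delta$ while using the trivial bound on $D_\delta^c$, and glue the two pieces by Cauchy-Schwartz. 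On $D_\delta^c$ the trivial bound combined with (\ref{T-2}) gives the required contribution of the correct order.

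On $D_\delta$, I would use the midpoint parametrization $(s,r)=(m+u/2,m-u/2)$ on the half $s>r$: then $|s-r|=u$, $dsdr=dm\,du$, and by the Markov property the future and past increments $X(u)=B(m+u/2)-B(m)$ and $Y(u)=B(m)-B(m-u/2)$ are mutually independent Brownian motions in $u/2$ with $B(s)-B(r)=X(u)+Y(u)$; similarly $\beta(s)-\beta(r)=\xi(u)+\zeta(u)$ with $\xi,\zeta$ independent one-dimensional Brownian motions. After the substitution $u=2v$ the integrand takes the form $|2\theta v+i\eta(\xi(v)+\zeta(v))|^{-\alpha_0}\gamma(X(v)+Y(v))$ built from two pairs of independent Brownian motions, which is precisely the structure Lemma \ref{T-1-1} handles.

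The main obstacle I expect is that the independent Brownian motions produced by the midpoint split depend on the anchor $m$, so Lemma \ref{T-1-1} cannot be applied inside the $m$-integral term by term. My plan to overcome this is a dyadic decomposition of $[0,1]$ into blocks on whose endpoints the strong Markov property produces genuinely independent Brownian motions; H\"older's inequality across blocks then recombines the block-wise Lemma \ref{T-1-1} estimates, with a summable loss, into the claimed uniform bound $Cc^{2/(2-\alpha)}$. Optimising in $\delta$ and in the H\"older exponents at the end removes the auxiliary parameters and yields the desired constant $C$, independent of $b$ and $\theta$.
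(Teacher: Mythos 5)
Your overall architecture (discard the $\beta$-increment away from the diagonal, keep it near the diagonal, decompose dyadically and recombine by H\"older using independence of increments over disjoint blocks) is the right one and is essentially the paper's triangular decomposition. But two decisive ingredients are missing or misplaced, and without them the argument does not close. First, your single cut-off $D_\delta=\{|s-r|<\delta\}$ cannot be calibrated: with $\delta$ fixed, the off-diagonal bound $|s-r|^{-\alpha_0}\le\delta^{-\alpha_0}$ turns the prefactor $b/t^{1-\alpha_0}$ into an effective coupling $b\delta^{-\alpha_0}t^{\alpha_0}/t$ growing like $t^{\alpha_0}$, so (\ref{T-2}) gives $\limsup=+\infty$; with $\delta\asymp t$ the off-diagonal order is right but $D_\delta$ is then essentially the whole square and nothing has been localized. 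The paper's resolution is genuinely multi-scale: off-diagonal dyadic squares at every scale $t/2^k$, $k=1,\dots,N$, each handled with the kernel $(s+r)^{-\alpha_0}$ intact via Lemma \ref{T-1-1} (not via a uniform bound plus (\ref{T-2})), with H\"older weights $2^{-(1-\alpha_0)k}$ whose blow-up factor $2^{(1-\alpha_0)k}$ is exactly absorbed by the scaling of the square, and with the finest scale $t/2^{N+1}\sim\epsilon\eta^{\alpha_0}t^{1-\alpha_0}$ chosen so that roughly $t^{\alpha_0}$ i.i.d.\ triangles remain along the diagonal.

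Second, and more seriously, your treatment of the near-diagonal part is structurally wrong. Lemma \ref{T-1-1} concerns the double integral $\int_0^1\!\int_0^1(s+r)^{-\alpha_0}\gamma(B_1(s)+B_2(r))\,dsdr$ with no $\beta$ at all; the paper applies it precisely to the off-diagonal squares, where the $\beta$-increment has already been discarded. Your midpoint parametrization produces, for each anchor $m$, a one-parameter integral of $|2\theta v+i\eta(\xi(v)+\zeta(v))|^{-\alpha_0}\gamma(X(v)+Y(v))$, which is not the object of Lemma \ref{T-1-1}, and the heuristic that $\beta$ supplies $|s-r|^{1/2}$-smoothing is never converted into an exponential-moment estimate. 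What the paper actually does near the diagonal is bound the kernel by $\eta^{-\alpha_0}|\beta(s)-\beta(r)|^{-\alpha_0}$ (dropping $s-r$, the opposite of what you keep) and invoke the large deviation (\ref{T-3}) for the time-augmented Brownian motion $(\beta,B)$ with augmented index $\alpha_0+\alpha<2$ — this is the only place the hypothesis of the lemma is used, and the choice of finest scale makes this contribution $O((\epsilon b)^{2/(2-\alpha-\alpha_0)})$, hence negligible as $\epsilon\to0^+$. You have no substitute for this step, so the near-diagonal contribution is not controlled in your proposal.
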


\proof By the relation
$$
\begin{aligned}
&\int_0^t\!\int_0^t
\Big\vert\theta(s-r)+i\eta\big(\beta(s)-\beta(r)\big)\Big\vert^{-\alpha_0}\gamma\big(B(s)-B(r)\big)drds\\
&=2\int\!\int_{[0,t]_<^2}
\Big\vert\theta(s-r)+i\eta\big(\beta(s)-\beta(r)\big)\Big\vert^{-\alpha_0}\gamma\big(B(s)-B(r)\big)drds,
\end{aligned}
$$
(\ref{T-11}) is equivalent to
\begin{align}\label{T-12}
&\limsup_{t\to\infty}{1\over t}
\log\E_0\exp\bigg\{{b\over t^{1-\alpha_0}}\int\!\int_{[0,t]_<^2}
\Big\vert\theta(s-r)+i\eta\big(\beta(s)-\beta(r)\big)\Big\vert^{-\alpha_0}\gamma\big(B(s)-B(r)\big)drds\bigg\}\nonumber\\
&\le Cb^{2\over 2-\alpha}\theta^{-{2\alpha_0\over 2-\alpha}}.
\end{align}

Define the random measures $Q(\cdot)$, $Q_1(\cdot)$ and $Q_2(\cdot)$ on $(\R_+)^2_<$ as follows
$$
Q(A)=\int\!\int_A\Big\vert\theta(s-r)+i\eta\big(\beta(s)-\beta(r)\big)\Big\vert^{-\alpha_0}\Gamma\big(B(s)-B(r)\big)dsdr
\hskip.2in A\subset(\R_+)^2_<;
$$
$$
Q_1(A)=\int\!\int_A\big\vert \beta(s)-\beta(r)\big\vert^{-\alpha_0}\gamma\big(B(s)-B(r)\big)dsdr
\hskip.2in A\subset(\R_+)^2_<;
$$
$$
Q_2(A)=\int\!\int_A\vert s-r\vert^{-\alpha_0}\gamma\big(B(s)-B(r)\big)dsdr
\hskip.2in A\subset(\R_+)^2_<.
$$
By the facts that
$$
\Big\vert\theta(s-r)+i\big(\beta(s)-\beta(r)\big)\Big\vert^{-\alpha_0}
\le \theta^{-\alpha_0}\vert s-r\vert^{-\alpha_0}
$$
and that
$$
\Big\vert\theta(s-r)+i\big(\beta(s)-\beta(r)\big)\Big\vert^{-\alpha_0}
\le\eta^{-\alpha_0}\big\vert \beta(s)-\beta(r)\big\vert^{-\alpha_0}
$$
we have
$$
Q(A)\le \eta^{-\alpha_0}Q_1(A)\hskip.1in\hbox{and}\hskip.1in
Q(A)\le \theta^{-\alpha_0}Q_2(A).
$$

Consider the triangular decomposition: For a integral $N\ge 1$,
$$
[0,t]_<^2=\bigg(\bigcup_{l=0}^{2^{N+1} -1}
\Big[{l\over 2^{N+1}}t, {l+1\over 2^{N+1}}t\Big]_<^2\bigg)\cup
\bigg(\bigcup_{k=0}^{N-1}\bigcup_{l=0}^{2^k-1}A_l^k\bigg)
$$
where
$$
A_l^k=\Big[{2l\over 2^{k+1}}t, {2l+1\over 2^{k+1}}t\Big]\times\Big[{2l+1\over 2^{k+1}}t, {2l+2\over 2^{k+1}}t\Big]
\hskip.2in l=0, 1,\cdots, 2^k-1;\hskip.1in k=0, 1,\cdots, N-1.
$$
See the diagram Figure 1 for the case $N=2$. In our proof, $N$ increases along with $t$ in a way
that will be specified later.
\begin{figure}\label{Fig:Chen1}
  \includegraphics[scale=0.99]{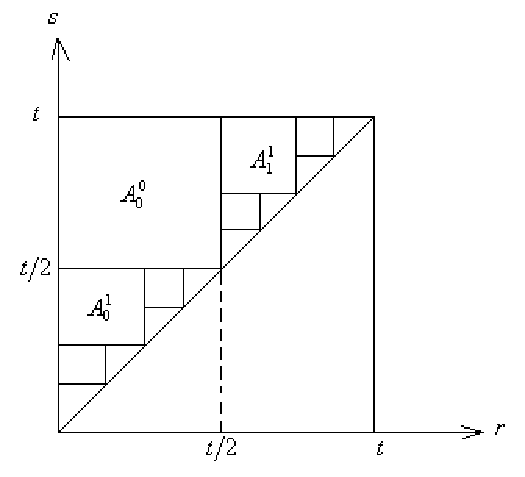}\\
  \caption{triangular approximation\index{triangular approximation}} 
\end{figure}
We have
$$
\begin{aligned}
&Q([0,t]_<^2)=Q\bigg(\bigcup_{l=0}^{2^{N+1}-1}
\Big[{l\over 2^{N+1}}, {l+1\over 2^{N+1}}\Big]_<^2\bigg)+
Q\bigg(\bigcup_{k=0}^{N-1}\bigcup_{l=0}^{2^k-1}A_l^k\bigg)\\
&\le \eta^{-\alpha_0}Q_1\bigg(\bigcup_{l=0}^{2^{N+1}-1}
\Big[{l\over 2^{N+1}}t, {l+1\over 2^{N+1}}t\Big]_<^2\bigg)+
\theta^{-\alpha_0}Q_2\bigg(\bigcup_{k=0}^{N-1}\bigcup_{l=0}^{2^k-1}A_l^k\bigg)\nonumber\\
&=\eta^{-\alpha_0}\sum_{l=0}^{2^{N+1}-1}Q_1\Big(\Big[{l\over 2^{N+1}}t, {l+1\over 2^{N+1}}t\Big]_<^2\Big)
+\theta^{-\alpha_0}\sum_{k=0}^{N-1}Q_2\Big(\bigcup_{l=0}^{2^k-1}A_l^k\Big).\nonumber
\end{aligned}
$$
By Cauchy-Schwartz inequality
\begin{align}\label{T-13}
&\E_0\exp\Big\{{b\over t^{1-\alpha_0}}Q\big([0, t]_<^2\big)\Big\}\\
&\le\Bigg(\E_0\exp\bigg\{2\eta^{-\alpha_0}{b\over t^{1-\alpha_0}}
\sum_{l=0}^{2^{N+1}-1}Q_1\Big(\Big[{l\over 2^{N+1}}t, {l+1\over 2^{N+1}}t\Big]_<^2\Big)\bigg\}\Bigg)^{1/2}\nonumber\\
&\times\Bigg(\E_0\exp\bigg\{2\theta^{-\alpha_0}{b\over t^{1-\alpha_0}}
\sum_{k=0}^{N-1}Q_2\Big(\bigcup_{l=0}^{2^k-1}A_l^k\Big)\bigg\}\Bigg)^{1/2}.\nonumber
\end{align}
Notice that the random variables
$$
Q_1\Big(\Big[{l\over 2^{N+1}}t, {l+1\over 2^{N+1}}t\Big]_<^2\Big)\hskip.2in l=0, 1,\cdots, 2^{N+1}-1
$$
form an i.i.d. sequence. Therefore,
$$
\begin{aligned}
&\E_0\exp\bigg\{2\eta^{-\alpha_0}{b\over t^{1-\alpha_0}}
\sum_{l=0}^{2^{N+1}-1}Q_1\Big(\Big[{l\over 2^{N+1}}t, {l+1\over 2^{N+1}}t\Big]_<^2\Big)\bigg\}\\
&=\Bigg(\E_0\exp\bigg\{2\eta^{-\alpha_0}{b\over t^{1-\alpha_0}}
Q_1\Big(\Big[0, {t\over 2^{N+1}}\Big]_<^2\Big)\bigg\}\Bigg)^{2^{N+1}}
\end{aligned}
$$
and
$$
\begin{aligned}
&Q_1\Big(\Big[0, {t\over 2^{N+1}}\Big]_<^2\Big)
={1\over 2}\int_0^{t\over 2^{N+1}}\int_0^{t\over 2^{N+1}}
\vert\beta(s)-\beta(r)\vert^{-\alpha_0}\gamma\big(B(s)-B(r)\big)dsdr\\
&\buildrel d\over ={1\over 2}\Big({t\over 2^{N+1}}\Big)^{4-\alpha-\alpha_0\over 2}\int_0^1\!\!\int_0^1
\vert\beta(s)-\beta(r)\vert^{-\alpha_0}\gamma\big(B(s)-B(r)\big)dsdr.
\end{aligned}
$$
In summary,
$$
\begin{aligned}
&\E_0\exp\bigg\{2\eta^{-\alpha_0}{b\over t^{1-\alpha_0}}
\sum_{l=0}^{2^{N+1}-1}Q_1\Big(\Big[{l\over 2^{N+1}}t, {l+1\over 2^{N+1}}t\Big]_<^2\Big)\bigg\}\\
&=\Bigg(\E_0\exp\bigg\{\eta^{-\alpha_0}b\Big({t\over 2^{N+1}}
\Big)^{2-\alpha-\alpha_0\over 2}{t^{\alpha_0}\over 2^{N+1}}
\int_0^1\!\!\int_0^1
\vert\beta(s)-\beta(r)\vert^{-\alpha_0}\gamma\big(B(s)-B(r)\big)dsdr\bigg\}\Bigg)^{2^{N+1}}.
\end{aligned}
$$
Given $\epsilon>0$, we now post our constraint
$$
\eta^{-\alpha_0}{t^{\alpha_0}\over 2^{N+1}}\le \epsilon
$$
for which we let
$$
N=\Big[{\log \epsilon^{-1}(\eta^{-1}t)^{\alpha_0}\over\log 2}\Big].
$$
So we have
$$
\begin{aligned}
&\E_0\exp\bigg\{2\eta^{-\alpha_0}{b\over t^{1-\alpha_0}}
\sum_{l=0}^{2^{N+1}-1}Q_1\Big(\Big[{l\over 2^{N+1}}t, {l+1\over 2^{N+1}}t\Big]_<^2\Big)\bigg\}\\
&\le \Bigg(\E_0\exp\bigg\{\epsilon b\Big({t\over 2^{N+1}}\Big)^{2-\alpha-\alpha_0\over 2}
\int_0^1\!\!\int_0^1
\vert\beta(s)-\beta(r)\vert^{-\alpha_0}\gamma\big(B(s)-B(r)\big)dsdr\bigg\}\Bigg)^{2^{N+1}}.
\end{aligned}
$$
In view of (\ref{T-2}) and of the relation
$$
\begin{aligned}
&\int_0^t\!\!\int_0^t
\vert\beta(s)-\beta(r)\vert^{-\alpha_0}\gamma\big(B(s)-B(r)\big)dsdr\\
&\buildrel d\over =t^{4-\alpha-\alpha_0\over 2}\int_0^1\!\!\int_0^1
\vert\beta(s)-\beta(r)\vert^{-\alpha_0}\gamma\big(B(s)-B(r)\big)dsdr,
\end{aligned}
$$
on the other hand, we have
$$
\lim_{t\to\infty}{1\over t}\log\E_0\exp\bigg\{b t^{2-\alpha-\alpha_0\over 2}\int_0^1\!\!\int_0^1
\vert\beta(s)-\beta(r)\vert^{-\alpha_0}\gamma\big(B(s)-B(r)\big)dsdr\bigg\}
=b^{2\over 2-\alpha-\alpha_0}\widetilde{H}
$$
Replacing $b$ by $\epsilon b$ and $t$ by $2^{-(N+1)}t$
\begin{align}\label{T-14}
&\limsup_{t\to\infty}{1\over t}\log\E_0\exp\bigg\{2\eta^{-\alpha_0}{b\over t^{1-\alpha_0}}
\sum_{l=0}^{2^{N+1}-1}Q_1\Big(\Big[{l\over 2^{N+1}}t, {l+1\over 2^{N+1}}t\Big]_<^2\Big)\bigg\}\\
&\le\lim_{t\to\infty}{2^{N+1}\over t}\log
\E_0\exp\bigg\{\epsilon b\Big({t\over 2^{N+1}}\Big)^{2-\alpha-\alpha_0\over 2}
\int_0^1\!\!\int_0^1
\vert\beta(s)-\beta(r)\vert^{-\alpha_0}\gamma\big(B(s)-B(r)\big)dsdr\bigg\}\nonumber\\
&= (\epsilon b)^{2\over 2-\alpha-\alpha_0}\widetilde{H}\nonumber
\end{align}
where we used the fact that $2^{N+1}t^{-1}\to 0$ (as $t\to\infty$) in the last
equality.
\medskip

Let 
$$
a_k=\prod_{j=2}^k\Big(1- 2^{-(1-\alpha_0)j}\Big)\hskip.2in k=2,3,\cdots, N\hskip.1in\hbox{and}\hskip.1in
C_0=\prod_{j=2}^\infty\Big(1- 2^{-(1-\alpha_0)j}\Big)^{-1}
$$
Since $C_0a_N\ge 1$,
$$
\E_0\exp\bigg\{2\theta^{-\alpha_0}{b\over t^{1-\alpha_0}}
\sum_{k=0}^{N-1}Q_2\Big(\bigcup_{l=0}^{2^k-1}A_l^k\Big)
\bigg\}
\le \E_0\exp\bigg\{2\theta^{-\alpha_0}{b\over t^{1-\alpha_0}}
C_0a_N\sum_{k=0}^{N-1}Q_2\Big(\bigcup_{l=0}^{2^k-1}A_l^k\Big)
\bigg\}.
$$
By H\"older's inequality
$$
\begin{aligned}
&\E_0\exp\bigg\{2\theta^{-\alpha_0}{b\over t^{1-\alpha_0}}
C_0 a_N\sum_{k=0}^{N-1}Q_2\Big(\bigcup_{l=0}^{2^k-1}A_l^k\Big)
\bigg\}\\
&\le \Bigg(\E_0\exp\bigg\{2\theta^{-\alpha_0}{b\over t^{1-\alpha_0}}\theta^{-\alpha_0}C_0a_{N-1}
\sum_{k=0}^{N-2}Q_2\Big(\bigcup_{l=0}^{2^k-1}A_l^k\Big)
\bigg\}\Bigg)^{1-2^{-(1-\alpha_0)N}}\\
&\times\Bigg(\E_0\exp\bigg\{2^{(1-\alpha_0)N}2\theta^{-\alpha_0}
C_0a_N
{b\over t^{1-\alpha_0}}Q_2\Big(\bigcup_{l=0}^{2^N-1}A_l^{N-1}\Big)\bigg\}\Bigg)^{2^{-(1-\alpha_0)N}}.
\end{aligned}
$$
To continue, we remove the power "$1-2^{-(1-\alpha_0)N}$" from the first factor. Since $a_N\le 1$, we remove
$a_N$ from the second factor. So we have
\begin{align}\label{T-15}
&\E_0\exp\bigg\{2\theta^{-\alpha_0}{b\over t^{1-\alpha_0}}
C_0 a_N\sum_{k=0}^{N-1}Q_2\Big(\bigcup_{l=0}^{2^k-1}A_l^k\Big)
\bigg\}\\
&\le \E_0\exp\bigg\{2\theta^{-\alpha_0}{b\over t^{1-\alpha_0}}\theta^{-\alpha_0}C_0a_{N-1}
\sum_{k=0}^{N-2}Q_2\Big(\bigcup_{l=0}^{2^k-1}A_l^k\Big)
\bigg\}\nonumber\\
&\times\Bigg(\E_0\exp\bigg\{2^{(1-\alpha_0)N}2\theta^{-\alpha_0}
C_0{b\over t^{1-\alpha_0}}Q_2\Big(\bigcup_{l=0}^{2^N-1}A_l^{N-1}\Big)\bigg\}\Bigg)^{2^{-(1-\alpha_0)N}}.\nonumber
\end{align}

Write
$$
Q_2\Big(\bigcup_{l=0}^{2^N-1}A_l^{N-1}\Big)=\sum_{l=0}^{2^N-1}Q_2(A_l^{N-1})
$$
and notice that the right hand side is a sum of i.i.d. Thus,
$$
\begin{aligned}
&\E_0\exp\bigg\{2^{(1-\alpha_0)N}2C_0\theta^{-\alpha_0}{b\over t^{1-\alpha_0}}
Q_2\Big(\bigcup_{l=0}^{2^N-1}A_l^{N-1}\Big)\bigg\}\\
&=\bigg(\E_0\exp\Big\{2^{(1-\alpha_0)N}2C_0\theta^{-\alpha_0}{b\over t^{1-\alpha_0}}Q_2(A_0^{N-1})\Big\}\bigg)^{2^N}.
\end{aligned}
$$
Further,
$$
\begin{aligned}
&Q_2(A_0^{N-1})=\int_0^{t\over 2^N}dr\int_{t\over 2^N}^{2t\over 2^N}ds\vert s-r\vert^{-\alpha_0}\gamma\big(B(s)-B(r)\big)\\
&\buildrel d\over =\Big({t\over 2^N}\Big)^{4-\alpha-2\alpha_0\over 2^N}
\int_0^{1}dr\int_1^{2}ds\vert s-r\vert^{-\alpha_0}\gamma\big(B(s)-B(r)\big)\\
&\buildrel d\over =\Big({t\over 2^N}\Big)^{4-\alpha-2\alpha_0\over 2^N}
\int_0^{1}\!\!\int_0^1(s+r)^{-\alpha_0}\gamma\big(B_1(s)+B_2(r)\big)dsdr
\end{aligned}
$$
where $B_1(\cdot)$ and $B_2(\cdot)$ are two independent $d$-dimensional Brownian motions,
and where the last identity in law follows from the variable substitution $r\mapsto 1-r$ and $s\mapsto 1+s$ so
$$
\begin{aligned}
&\int_0^{1}dr\int_1^{2}ds\vert s-r\vert^{-\alpha_0}\gamma\big(B(s)-B(r)\big)\\
&=\int_0^{1}dr\int_0^1ds\vert (1+s)-(1-r)\vert^{-\alpha_0}\gamma\big(B(1+s)-B(1-r)\big)\\
&=\int_0^1\!\int_0^1(s+r)^{-\alpha_0}\gamma\Big(\big(B(1+s)-B(1)\big)+\big(B(1)-B(1-r)\big)\Big)dsdr
\end{aligned}
$$
and $B_1(s)\equiv B(1+s)-B(1)$ ($0\le s\le 1$) and
$B_2(r)\equiv B(1)-B(1-r)$ ($0\le r\le 1$) are 
 two independent $d$-dimensional Brownian motions. Thus,
 $$
 \begin{aligned}
 &\E_0\exp\Big\{2^{(1-\alpha_0)N}2C_0\theta^{-\alpha_0}{b\over t^{1-\alpha_0}}Q_2(A_0^{N-1})\Big\}\\
 &=\E_0\exp\bigg\{2Cb\theta^{-\alpha_0}
\Big({t\over 2^N}\Big)^{2-\alpha\over 2}\int_0^1\!\int_0^1(s+r)^{-\alpha_0}
\gamma\big(B_1(s)+B_2(r)\big)dsdr\bigg\}.
\end{aligned}
$$
 Therefore,
$$
\begin{aligned}
&\E_0\exp\bigg\{2^{(1-\alpha_0)N}2\theta^{-\alpha_0}
C_0{b\over t^{1-\alpha_0}}Q_2\Big(\bigcup_{l=0}^{2^N-1}A_l^{N-1}\Big)\bigg\}\\
&=\Bigg(\E_0\exp\bigg\{2C_0b\theta^{-\alpha_0}
\Big({t\over 2^N}\Big)^{2-\alpha\over 2}\int_0^1\!\int_0^1(s+r)^{-\alpha_0}
\gamma\big(B_1(s)+B_2(r)\big)dsdr\bigg\}\Bigg)^{2^N}.
\end{aligned}
$$
Bringing it back to (\ref{T-15}),
$$
\begin{aligned}
&\E_0\exp\bigg\{2\theta^{-\alpha_0}{b\over t^{1-\alpha_0}}
C_0 a_N\sum_{k=0}^{N-1}Q_2\Big(\bigcup_{l=0}^{2^k-1}A_l^k\Big)
\bigg\}\\
&\le \E_0\exp\bigg\{2\theta^{-\alpha_0}{b\over t^{1-\alpha_0}}\theta^{-\alpha_0}C_0a_{N-1}
\sum_{k=0}^{N-2}Q_2\Big(\bigcup_{l=0}^{2^k-1}A_l^k\Big)
\bigg\}\\
&\times\Bigg(\E_0\exp\bigg\{2C_0b\theta^{-\alpha_0}
\Big({t\over 2^N}\Big)^{2-\alpha\over 2}\int_0^1\!\int_0^1(s+r)^{-\alpha_0}
\gamma\big(B_1(s)+B_2(r)\big)dsdr\bigg\}\Bigg)^{2^{N\alpha_0}}.
\end{aligned}
$$
Repeating the same game
$$
\begin{aligned}
&\E_0\exp\bigg\{{b\over t^{1-\alpha_0}}\theta^{-\alpha_0}\sum_{k=0}^{N-1}Q_2\Big(\bigcup_{l=0}^{2^k-1}A_l^k\Big)
  \bigg\}\le\E_0\exp\bigg\{2\theta^{-\alpha_0}{b\over t^{1-\alpha_0}}
C_0 a_N\sum_{k=0}^{N-1}Q_2\Big(\bigcup_{l=0}^{2^k-1}A_l^k\Big)
\bigg\}\\
&\le \prod_{k=1}^{N}\Bigg(\E_0\exp\bigg\{2C_0b\theta^{-\alpha_0}
\Big({t\over 2^k}\Big)^{2-\alpha\over 2}\int_0^1\!\int_0^1(s+r)^{-\alpha_0}
\gamma\big(B_1(s)+B_2(r)\big)dsdr\bigg\}\Bigg)^{2^{k\alpha_0}}.
\end{aligned}
$$
By Lemma \ref{T-1-1} with $b$ being replaced by $2C_0b\theta^{-\alpha_0}$ and $t$ being replaced by
$2^{-k}t$,
$$
\begin{aligned}
&\limsup_{t\to\infty}{1\over t}\log\E_0\exp\bigg\{{b\over t^{1-\alpha_0}}\theta^{-\alpha_0}\sum_{k=0}^{N-1}Q_2\Big(\bigcup_{l=0}^{2^k-1}A_l^k\Big)
  \bigg\}\le C (2C_0b)^{2\over 2-\alpha}\theta^{-{2\alpha_0\over 2-\alpha}}
  \sum_{k=1}^\infty 2^{-(1-\alpha_0)k}.
\end{aligned}
$$
Together with (\ref{T-12}) and (\ref{T-13}),
$$
\begin{aligned}
&\limsup_{t\to\infty}{1\over t}\log\E_0\exp\Big\{{b\over t^{1-\alpha_0}}Q\big([0, t]_<^2\big)\Big\}\cr
&\le {1\over 2}C(\epsilon b)^{2\over 2-\alpha-\alpha_0}+
{1\over 2}C (2C_0b)^{2\over 2-\alpha}\theta^{-{2\alpha_0\over 2-\alpha}}\sum_{k=1}^\infty 2^{-(1-\alpha_0)k}.
\end{aligned}
$$
Letting $\epsilon\to 0^+$ on the right hand side finally completes the proof of (\ref{T-12}). \qed

\medskip

{\it Proof of the upper bound}.
To tighten (\ref{T-11}) into the demanded upper bound, we first write it as
\begin{align}\label{T-16}
&\limsup_{t\to\infty}{1\over t}
\log\E_0\exp\bigg\{{b\over t}\int_0^t\!\int_0^t
\Big\vert{\theta(s-r)+i\eta\big(\beta(s)-\beta(r)\big)\over t}\Big\vert^{-\alpha_0}\gamma\big(B(s)-B(r)\big)drds\bigg\}\nonumber\\
&\le Cb^{2\over 2-\alpha}\theta^{-{2\alpha_0\over 2-\alpha}}
\end{align}
and prove
\begin{align}\label{T-17}
\lim_{\delta\to 0^+}
\limsup_{t\to\infty}{1\over t}\log\E_0&\exp\bigg\{{b\over t}\int_0^t\!\int_0^t
\tilde{\gamma}^0_{\delta}\Big({\theta(s-r)+i\eta\big(\beta(s)-\beta(r)\big)\over t}\Big)\\
&\times \gamma\big(B(s)-B(r)\big)drds\bigg\}=0\nonumber
\end{align}
where
$$
\tilde{\gamma}^0_\delta(u)=\Gamma(\alpha_0)^{-1}\int_{\delta^{-1}}^\infty e^{-\lambda\vert u\vert}{d\lambda\over\lambda^{1-\alpha_0}}.
$$
Indeed, given $\epsilon>0$, break $[0, t]^2$ into two parts: On the first part
$$
\Big\vert{\theta(s-r)+i\eta\big(\beta(s)-\beta(r)\big)\over t}\Big\vert\ge\epsilon
$$
where
$$
\tilde{\gamma}^0_{\delta}\Big({\theta(s-r)+i\eta\big(\beta(s)-\beta(r)\big)\over t}\Big)
\le\tilde{\gamma}^0_{\delta}(\epsilon).
$$
On the second part
$$
\Big\vert{\theta(s-r)+i\eta\big(\beta(s)-\beta(r)\big)\over t}\Big\vert<\epsilon
$$
where
$$
\begin{aligned}
&\tilde{\gamma}^0_{\delta}\Big({\theta(s-r)+i\eta\big(\beta(s)-\beta(r)\big)\over t}\Big)
\le\Big\vert{\theta(s-r)+i\eta\big(\beta(s)-\beta(r)\big)\over t}\Big\vert^{-\alpha_0}\\
&\le\epsilon^{\tilde{\alpha_0}-\alpha_0}\Big\vert{\theta(s-r)
+i\eta\big(\beta(s)-\beta(r)\big)\over t}\Big\vert^{-\tilde{\alpha}_0}
\end{aligned}
$$
and where $\alpha_0<\tilde{\alpha}_0<1$ satisfies $\tilde{\alpha}_0+\alpha<2$. Thus,
$$
\begin{aligned}
&\int_0^t\!\int_0^t
\tilde{\gamma}^0_{\delta}\Big({\theta(s-r)+i\eta\big(\beta(s)-\beta(r)\big)\over t}\Big)\gamma\big(B(s)-B(r)\big)drds\\
&\le\tilde{\gamma}^0_{\delta}(\epsilon)\int_0^t\!\int_0^t\gamma\big(B(s)-B(r)\big)drds\\
&+\epsilon^{\tilde{\alpha}_0-\alpha_0}\int_0^t\!\int_0^t
\Big\vert{\theta(s-r)+i\eta\big(\beta(s)-\beta(r)\big)\over t}\Big\vert^{-\tilde{\alpha}_0}\gamma\big(B(s)-B(r)\big)drds.
\end{aligned}
$$
By Cauchy-Schwartz's inequality
$$
\begin{aligned}
&\E_0\exp\bigg\{{b\over t}\int_0^t\!\int_0^t
\tilde{\gamma}^0_{\delta}\Big({\theta(s-r)+i\eta\big(\beta(s)-\beta(r)\big)\over t}\Big)
\gamma\big(B(s)-B(r)\big)drds\bigg\}\\
&\le\Bigg(\E_0\exp\bigg\{{2b\over t}\tilde{\gamma}_\delta^0(\epsilon)\int_0^t\!\int_0^t
\gamma\big(B(s)-B(r)\big)drds\bigg\}\Bigg)^{1/2}\\
&\times\Bigg(\E_0\exp\bigg\{{2b\over t}\epsilon^{\tilde{\alpha}_0-\alpha_0}
\int_0^t\!\int_0^t\Big\vert{\theta(s-r)+i\eta\big(\beta(s)-\beta(r)\big)\over t}
\Big\vert^{-\tilde{\alpha}_0}\gamma\big(B(s)-B(r)\big)drds\bigg\}\Bigg)^{1/2}.
\end{aligned}
$$
By (\ref{T-2}) and (\ref{T-16}) (with $\alpha_0$ being replaced by $\tilde{\alpha}_0$),
$$
\begin{aligned}
&\limsup_{t\to\infty}{1\over t}\log\E_0\exp\bigg\{{b\over t}\int_0^t\!\int_0^t
\tilde{\gamma}^0_{\delta}\Big({\theta(s-r)+i\eta\big(\beta(s)-\beta(r)\big)\over t}\Big)
\gamma\big(B(s)-B(r)\big)drds\bigg\}\\
&\le {1\over 2}\Big\{C(2b)^{2\over 2-\alpha}\big(\tilde{\gamma}^0_\delta(\epsilon)\big)^{2\over 2-\alpha}
+C(2b)^{2\over 2-\alpha}\epsilon^{2(\tilde{\alpha_0}-\alpha_0)\over 2-\alpha}
\Big\}.
\end{aligned}
$$
The claim (\ref{T-17}) follows from that $\tilde{\gamma}^0_\delta(\epsilon)\to 0$ ($\delta\to 0^+$) for any $\epsilon>0$.

We finally come to the proof for the upper bound of (\ref{T-7}).
By the relation $\vert\cdot\vert^{-\alpha_0}=\gamma_\delta^0(\cdot)+\tilde{\gamma}_\delta^0(\cdot)$ and by H\"older's
inequality
$$
\begin{aligned}
&\E_0\exp\bigg\{{b\over t}\int_0^t\!\int_0^t\Big\vert{\theta(s-r)+i\eta\big(\beta(s)-\beta(r)\big)\over t}\Big\vert^{-\alpha_0}\gamma\big(B(s)-B(r)\big)drds\bigg\}\\
&\le\Bigg(\E_0\exp\bigg\{{bp\over t}\int_0^t\!\int_0^t
\gamma_\delta^0\Big({\theta(s-r)+i\eta\big(\beta(s)-\beta(r)\big)\over t}\Big)\gamma\big(B(s)-B(r)\big)drds\bigg\}\Bigg)^{1/p}\\
&\times\Bigg(\E_0\exp\bigg\{{bq\over t}\int_0^t\!\int_0^t\tilde{\gamma}_\delta^0\Big({\theta(s-r)+i\eta\big(\beta(s)-\beta(r)\big)\over t}\Big)\gamma\big(B(s)-B(r)\big)drds\bigg\}\Bigg)^{1/q}.
\end{aligned}
$$
Using the bound
$$
\gamma_\delta^0\Big({\theta(s-r)+i\eta\big(\beta(s)-\beta(r)\big)\over t}\Big)\le \gamma_\delta^0\Big({\theta(s-r)\over t}\Big)
=\theta^{-\alpha_0}\gamma_{\theta^{-1}\delta}\Big({s-r\over t}\Big)
$$
and (\ref{T-6}) (with $b$ being replaced by $bp\theta^{-\alpha_0}$ and $\delta$ by $\theta^{-1}\delta$),
$$
\begin{aligned}
&\limsup_{t\to\infty}{1\over t}\log\E_0\exp\bigg\{{bp\over t}\int_0^t\!\int_0^t\gamma_\delta^0\Big({\theta(s-r)+i\eta\big(\beta(s)-\beta(r)\big)\over t}\Big)\gamma\big(B(s)-B(r)\big)drds\bigg\}\\
&\le (pb)^{2\over 2-\alpha}\theta^{-{2\alpha_0\over 2-\alpha}}{\cal E}_{\theta^{-1}\delta}
\le(pb)^{2\over 2-\alpha}\theta^{-{2\alpha_0\over 2-\alpha}}{\cal E}_0.
\end{aligned}
$$
Thus,
$$
\begin{aligned}
&\limsup_{t\to\infty}{1\over t}
\log\E_0\exp\bigg\{{b\over t}\int_0^t\!\int_0^t\Big\vert{\theta(s-r)+i\eta\big(\beta(s)-\beta(r)\big)\over t}\Big\vert^{-\alpha_0}\gamma\big(B(s)-B(r)\big)drds\bigg\}\\
&\le {1\over p}(pb)^{2\over 2-\alpha}\theta^{-{2\alpha_0\over 2-\alpha}}{\cal E}_0\\
&+{1\over q}\limsup_{t\to\infty}{1\over t}\log\E_0\exp\bigg\{{bq\over t}\int_0^t\!\int_0^t
\tilde{\gamma}^0_{\delta}\Big({\theta(s-r)+i\eta\big(\beta(s)-\beta(r)\big)\over t}\Big)
\gamma\big(B(s)-B(r)\big)drds\bigg\}.
\end{aligned}
$$
Letting $\delta\to 0^+$ on the right hand side, by (\ref{T-17}) (with $b$ being replaced by $qb$),
$$
\begin{aligned}
&\limsup_{t\to\infty}{1\over t}
\log\E_0\exp\bigg\{{b\over t}\int_0^t\!\int_0^t\Big\vert{\theta(s-r)+i\eta\big(\beta(s)-\beta(r)\big)\over t}\Big\vert^{-\alpha_0}\gamma\big(B(s)-B(r)\big)drds\bigg\}\\
&\le {1\over p}(pb)^{2\over 2-\alpha}\theta^{-{2\alpha_0\over 2-\alpha}}{\cal E}_0.
\end{aligned}
$$
Letting $p\to 1^+$ on the right hand side leads to the demanded upper bound:
\begin{align}\label{T-18}
&\limsup_{t\to\infty}{1\over t}
\log\E_0\exp\bigg\{{b\over t^{1-\alpha_0}}\int_0^t\!\int_0^t\Big\vert\theta(s-r)+i\eta\big(\beta(s)-\beta(r)\big)\Big\vert^{-\alpha_0}\gamma\big(B(s)-B(r)\big)drds\bigg\}\nonumber\\
&\le b^{2\over 2-\alpha}\theta^{-{2\alpha_0\over 2-\alpha}}{\cal E}_0.
\end{align}
\qed

We end this section with the following lemma.

\begin{lemma}\label{T-L} 
For any $\delta>0$
\begin{align}\label{T-19}
  \lim_{n\to\infty}{1\over n}\log (n!)^{-\alpha/2}
  \E_0\bigg[\int_0^1\!\!\int_0^1\gamma_\delta^0(s-r)
\gamma\big(B(s)-B(r)\big)dsdr\bigg]^n=\log\Big({2{\cal E}_\delta\over 2-\alpha}\Big)^{2-\alpha\over 2}.
\end{align}
For any $\theta>0$ and $\eta>0$,
\begin{align}\label{T-20}
&\lim_{n\to\infty}{1\over n}\log (n!)^{-\alpha/2}\E_0\bigg[\int_0^1\!\!\int_0^1
\Big\vert \theta(s-r)+i{\eta\over\sqrt{n}}\big(\beta(s)-\beta(r)\big)\Big\vert^{-\alpha_0}
\gamma\big(B(s)-B(r)\big)dsdr\bigg]^n\nonumber\\
&=\log\Big({2{\cal E}_0\over 2-\alpha}\Big)^{2-\alpha\over 2}-\alpha_0\log\theta.
\end{align}
\end{lemma}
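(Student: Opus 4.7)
The plan is to deduce both halves of Lemma \ref{T-L} from the exponential-moment asymptotics (\ref{T-6}) and Theorem \ref{th-6} via Tauberian-style conversions, with Brownian self-similarity and the homogeneity (\ref{intro-8}) of $\gamma$ used to line up the scales. I prove (\ref{T-19}) first and then use it to handle the lower bound in (\ref{T-20}).

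For (\ref{T-19}), the substitution $s=tu,\ r=tv$ together with $B(tu)\stackrel{d}{=}\sqrt{t}\,B(u)$ and $\gamma(\sqrt{t}\,x)=t^{-\alpha/2}\gamma(x)$ gives
$$\int_0^t\!\!\int_0^t\gamma_\delta^0\Big({s-r\over t}\Big)\gamma\big(B(s)-B(r)\big)\,ds\,dr\stackrel{d}{=}t^{(4-\alpha)/2}Y_\delta,\qquad Y_\delta:=\int_0^1\!\!\int_0^1\gamma_\delta^0(s-r)\gamma\big(B(s)-B(r)\big)\,ds\,dr.$$
Substituted into (\ref{T-6}) and reparametrized by $\lambda=bt^{(2-\alpha)/2}$ this yields $\lim_{\lambda\to\infty}\lambda^{-2/(2-\alpha)}\log\E_0 e^{\lambda Y_\delta}={\cal E}_\delta$, an exponential-moment asymptotic at exponent $p=2/(2-\alpha)>1$. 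A Tauberian theorem of Kasahara--de Bruijn type (see \cite{Chen-1}) converts this into $(1/n)\log[(n!)^{-(p-1)/p}\E Y_\delta^n]\to p^{-1}\log(p{\cal E}_\delta)$; since $(p-1)/p=\alpha/2$ and $p^{-1}\log(p{\cal E}_\delta)=\log(2{\cal E}_\delta/(2-\alpha))^{(2-\alpha)/2}$, this is exactly (\ref{T-19}).

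For (\ref{T-20}), denote the integral on the left by $W_n$. The same rescaling applied to Theorem \ref{th-6} produces $\lim_n n^{-1}\log\E_0 e^{bn^{(2-\alpha)/2}W_n}=Kb^{2/(2-\alpha)}$ with $K=\theta^{-2\alpha_0/(2-\alpha)}{\cal E}_0$. The upper bound follows from Markov, $\E W_n^n\leq n!\lambda_n^{-n}\E_0 e^{\lambda_n W_n}$ with $\lambda_n=bn^{(2-\alpha)/2}$; Stirling together with optimization in $b$ at the saddle $b_*^{2/(2-\alpha)}=(2-\alpha)/(2K)$ gives $\limsup_n(1/n)\log[(n!)^{-\alpha/2}\E W_n^n]\leq\log(2K/(2-\alpha))^{(2-\alpha)/2}$, which rewrites as $-\alpha_0\log\theta+\log(2{\cal E}_0/(2-\alpha))^{(2-\alpha)/2}$. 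For the matching lower bound, fix $a>0$ and let $E_a=\{\max_{s\leq 1}|\beta(s)|\leq a\}$. On $E_a$ the inequality $|u+iv|\leq|u|+|v|$ yields
$$\big|\theta(s-r)+i(\eta/\sqrt n)(\beta(s)-\beta(r))\big|^{-\alpha_0}\geq\theta^{-\alpha_0}\big(|s-r|+c_n\big)^{-\alpha_0},\qquad c_n:={2a\eta\over\theta\sqrt n},$$
and the Laplace representation (\ref{intro-18}) further gives $(|s-r|+c_n)^{-\alpha_0}\geq e^{-c_n/\delta}\gamma_\delta^0(s-r)$ for every $\delta>0$. Independence of $B$ and $\beta$ then produces
$$\E_0 W_n^n\geq\theta^{-n\alpha_0}e^{-nc_n/\delta}\E_0 Y_\delta^n\cdot\P(E_a),$$
from which taking $(1/n)\log$, noting that $(1/n)\log e^{-nc_n/\delta}=-c_n/\delta\to 0$ and $(1/n)\log\P(E_a)\to 0$, and invoking (\ref{T-19}) yields $\liminf_n(1/n)\log[(n!)^{-\alpha/2}\E W_n^n]\geq-\alpha_0\log\theta+\log(2{\cal E}_\delta/(2-\alpha))^{(2-\alpha)/2}$. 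Sending $\delta\to 0^+$ and invoking ${\cal E}_\delta\uparrow{\cal E}_0$ (monotone convergence of $\gamma_\delta^0\uparrow|\cdot|^{-\alpha_0}$ inside the variational sup) closes the bound.

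The main technical worry is the $c_n=O(n^{-1/2})$ shift inside the $n$-th-power bound: at first glance the factor $e^{-nc_n/\delta}=e^{-O(\sqrt n)/\delta}$ vanishes super-polynomially, but because we ultimately divide the logarithm by $n$ its contribution reduces to $-c_n/\delta\to 0$, so the argument survives. The other place where I lean on outside machinery is the converse half of the Tauberian conversion underlying (\ref{T-19})---the direct half is immediate from Markov's inequality with an optimized $\lambda$, while the matching direction (exponential-moment asymptotic implying the $n$-th-moment asymptotic with the sharp constant) is the standard de Bruijn--Kasahara statement I import from \cite{Chen-1}.
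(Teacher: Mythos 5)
Your proposal is correct, and the two halves sit differently relative to the paper. For (\ref{T-19}) you and the paper do the same thing in substance: both convert the rescaled exponential-moment asymptotic (\ref{T-6}), $\lim_t t^{-1}\log\E_0\exp\{bt^{(2-\alpha)/2}Y_\delta\}=b^{2/(2-\alpha)}{\cal E}_\delta$, into the high-moment asymptotic; the paper spells this out via the G\"artner--Ellis theorem on $\R_+$ followed by Varadhan's lemma applied to $\exp\{t\log(t^{-\alpha/2}\cdots)\}$ and Stirling, whereas you import the equivalent Kasahara-type conversion as a black box (your constant bookkeeping $(p-1)/p=\alpha/2$, $p^{-1}\log(p{\cal E}_\delta)$ checks out). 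For (\ref{T-20}) the paper simply asserts that ``a completely parallel argument'' applies to the rescaled form of Theorem \ref{th-6}, which requires running G\"artner--Ellis and Varadhan on a family whose integrand itself depends on $n$ through the $\eta/\sqrt{n}$ factor. You avoid that by splitting the two directions: the upper bound is the elementary half of the Tauberian conversion ($\E W_n^n\le n!\,\lambda_n^{-n}\E e^{\lambda_n W_n}$ with $\lambda_n=bn^{(2-\alpha)/2}$, then optimizing $b$ at the correct saddle), and the lower bound reduces directly to (\ref{T-19}) by conditioning on $\{\max_{s\le1}|\beta(s)|\le a\}$ and using $(|s-r|+c_n)^{-\alpha_0}\ge e^{-c_n/\delta}\gamma_\delta^0(s-r)$ with $c_n=O(n^{-1/2})$ — essentially transplanting the paper's own lower-bound device from the proof of Theorem \ref{th-6} to the moment level. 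Your worry about the $e^{-nc_n/\delta}$ factor is resolved exactly as you say, since $n^{-1}\log e^{-nc_n/\delta}=-c_n/\delta\to0$, and the factorization $\E_0[Y_\delta^n 1_{E_a}]=\E_0[Y_\delta^n]\P(E_a)$ is legitimate because $Y_\delta$ depends only on $B$ and $E_a$ only on $\beta$; note also that here $\P(E_a)$ is a fixed constant (time horizon $1$), so no $a\to\infty$ is needed. The only step you should make explicit is ${\cal E}_\delta\uparrow{\cal E}_0$, which follows from monotone convergence applied to a near-maximizer with finite energy; the paper uses the same fact without comment at the end of its proof of the lower bound of Theorem \ref{th-6}. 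What your route buys is a self-contained treatment of (\ref{T-20}) that does not require re-justifying the large-deviation machinery for an $n$-dependent functional; what the paper's route buys is brevity, at the cost of leaving that justification implicit.
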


\proof First notice
$$
\int_0^t\!\int_0^t\gamma_\delta^0\Big({s-r\over t}\Big)
\gamma\big(B(s)-B(r)\big)dsdr\buildrel d\over =t^{4-\alpha\over 2}
\int_0^1\!\int_0^1\gamma_\delta^0(s-r)
\gamma\big(B(s)-B(r)\big)dsdr.
$$
Therefore, (\ref{T-6}) can be written as
$$
\lim_{t\to\infty}{1\over t}\log\E_0\exp\bigg\{b t^{2-\alpha\over 2}\int_0^1\!\int_0^1\gamma_\delta^0(s-r)
\gamma\big(B(s)-B(r)\big)dsdr\bigg\}
=b^{2\over 2-\alpha}{\cal E}_\delta\hskip.2in b>0.
$$
By G\"artner-Ellis theorem on $\R^+$ (Theorem 1.2.4, p. 11, \cite{Chen-1}),
the process
$$
t^{-\alpha/2}\int_0^t\!\int_0^t\gamma_\delta^0(s-r)
\gamma\big(B(s)-B(r)\big)dsdr\hskip.2in t>0
$$
obeys the large deviation on $\R_+$ with deviation scale $t$ and the
rate function
$$
I(\lambda)=\sup_{b>0}\Big\{\lambda b-b^{2\over 2-\alpha}{\cal E}_\delta\Big\}
={\alpha\over 2}\Big({2-\alpha\over 2{\cal E}_\delta}\Big)^{2-\alpha\over \alpha}\lambda^{2\over\alpha}
\hskip.2in \lambda>0.
$$
By Varadhan's integral lemma (Theorem 4.3, p.137, \cite{DZ})
$$
\begin{aligned}
&\lim_{t\to\infty}{1\over t}\log\E_0\exp\bigg\{t\log\bigg( t^{-\alpha/2}
\int_0^1\!\!\int_0^1\tilde{\gamma}_a^0(s-r)
\gamma\big(B(s)-B(r)\big)dsdr\bigg)\bigg\}\\
&=\sup_{\lambda>0}\bigg\{\log \lambda-{\alpha\over 2}\Big({2-\alpha\over 2{\cal E}_\delta}\Big)^{2-\alpha\over \alpha}\lambda^{2\over\alpha}\bigg\}=-{\alpha\over 2}
+\log\Big({2{\cal E}_\delta\over 2-\alpha}\Big)^{2-\alpha\over 2}.
\end{aligned}
$$
Take $t=n$:
$$
\begin{aligned}
&\lim_{n\to\infty}{1\over n}\log n^{-{\alpha n\over 2}}\E_0\bigg[\int_0^1\!\!\int_0^1\tilde{\gamma}_a^0(s-r)
\gamma\big(B(s)-B(r)\big)dsdr\bigg]^n
=-{\alpha\over 2}+\log\Big({2{\cal E}_\delta\over 2-\alpha}\Big)^{2-\alpha\over 2}.
\end{aligned}
$$
Applying Stirling formula to the above gives (\ref{T-19}).

\medskip

Notice that
$$
\begin{aligned}
&\int_0^t\!\int_0^t
\Big\vert\theta(s-r)+i\eta\big(\beta(s)-\beta(r)\big)\Big\vert^{-\alpha_0}\gamma\big(B(s)-B(r)\big)drds\\
&\buildrel d\over =t^{4-\alpha-2\alpha_0\over 2}\int_0^1\!\int_0^1
\Big\vert\theta(s-r)+i{\eta\over \sqrt{t}}\big(\beta(s)-\beta(r)\big)\Big\vert^{-\alpha_0}\gamma\big(B(s)-B(r)\big)drds.
\end{aligned}
$$
Therefore, (\ref{T-7}) can be written as
$$
\begin{aligned}
&\lim_{t\to\infty}{1\over t}
\log\E_0\exp\bigg\{bt^{2-\alpha\over 2}\int_0^1\!\int_0^1
\Big\vert\theta(s-r)+i{\eta\over \sqrt{t}}\big(\beta(s)-\beta(r)\big)
\Big\vert^{-\alpha_0}\gamma\big(B(s)-B(r)\big)drds\bigg\}\\
&= b^{2\over 2-\alpha}\theta^{-{2\alpha_0\over 2-\alpha}}{\cal E}_0.
\end{aligned}
$$
The remaining of the proof for (\ref{T-20})
follows a completely parallel argument.
\qed

\section{Proof of Theorem \ref{th-2}}\label{P}

The central piece of the proof is to establish
\begin{align}\label{P-1}
&\lim_{n\to\infty}{1\over n}\log (n!)^{3-\alpha}\E S_{2n}\big(g_{2n}(\cdot, 1, 0)\big)
=\log \bigg({2(4-\alpha-2\alpha_0)^{4-\alpha-2\alpha_0\over 2}\over
(4-\alpha-\alpha_0)^{4-\alpha-\alpha_0}}\Big({{\cal M}\over 4-\alpha}\Big)^{4-\alpha\over 2}\bigg).
\end{align}
Indeed,  by (\ref{M-9}) $\E S_{2n-1}\big(g_{2n-1}(\cdot, t, 0)\big)=0$.
From (\ref{M-10-20}), (\ref{intro-8}) and (\ref{M-13}) one can derive that
\begin{align}\label{P-2}
\E S_{2n}\big(g_{2n}(\cdot, t, 0)\big)=t^{(4-\alpha-\alpha_0)n}\E S_{2n}\big(g_{2n}(\cdot, 1, 0)\big)
\hskip.2in t>0.
\end{align}
By the Stratonovich expansion (\ref{M-3}), therefore,
$$
\E u(t,x)=\E u(t,0)=\sum_{n=0}^\infty\E S_{2n}\big(g_{2n}(\cdot, t, 0)\big)
=\sum_{n=0}^\infty t^{(4-\alpha-\alpha_0)n}\E S_{2n}\big(g_{2n}(\cdot, 1, 0)\big)
$$
where the first equality comes from the stationarity in $x$.
By (\ref{P-1}), therefore, Theorem \ref{th-2} follows from the following computation:
$$
\begin{aligned}
&\lim_{t\to\infty}t^{-{4-\alpha-\alpha_0\over 3-\alpha}}\log\E u(t,x)
=\lim_{t\to\infty}t^{-{4-\alpha-\alpha_0\over 3-\alpha}}\log\sum_{n=0}^\infty t^{(4-\alpha-\alpha_0)n}\E S_{2n}\big(g_{2n}(\cdot, 1, 0)\big)\\
&=\lim_{t\to\infty}t^{-{4-\alpha-\alpha_0\over 3-\alpha}}\log\sum_{n=0}^\infty 
{t^{(4-\alpha-\alpha_0)n}\over (n!)^{3-\alpha}}\bigg({2(4-\alpha-2\alpha_0)^{4-\alpha-2\alpha_0\over 2}\over
(4-\alpha-\alpha_0)^{4-\alpha-\alpha_0}}\Big({{\cal M}\over 4-\alpha}\Big)^{4-\alpha\over 2}\bigg)^n\\
&=(3-\alpha)\bigg({2(4-\alpha-2\alpha_0)^{4-\alpha-2\alpha_0\over 2}\over
(4-\alpha-\alpha_0)^{4-\alpha-\alpha_0}}\Big({{\cal M}\over 4-\alpha}\Big)^{4-\alpha\over 2}\bigg)^{1\over 3-\alpha}
\end{aligned}
$$
where the last step follows from the elementary identity on asymptotics of Mittag-Leffler function (Lemma A.3, \cite{BCC}):
$$
\lim_{b\to\infty} b^{-1/\gamma}\log\sum_{n=0}^\infty {\theta^nb^n\over(n!)^\gamma}
=\gamma\theta^{1/\gamma}\hskip.2in \theta,\gamma>0
$$
with $b=t^{4-\alpha -\alpha_0}$, $\gamma =3-\alpha$ and 
$$
\theta =\bigg({2(4-\alpha-2\alpha_0)^{4-\alpha-2\alpha_0\over 2}\over
(4-\alpha-\alpha_0)^{4-\alpha-\alpha_0}}\Big({{\cal M}\over 4-\alpha}\Big)^{4-\alpha\over 2}\bigg).
$$
Recall that the variation ${\cal E}_0$ is defined in (\ref{T-5}) and define
$$
\begin{aligned}
\widetilde{\cal M}&=\sup_{g\in{\cal A}_d}\bigg\{\bigg(\int_0^1\!\!\int_0^1\int_{\R^d\times\R^d}{\gamma(x-y)\over \vert s-r\vert^{\alpha_0}}g^2(s, x)g^2(r, y)dxdydsdr\bigg)^{1/2}\\
&-{1\over 2}\int_0^1\int_{\R^d}\vert\nabla_x g(s, x)\vert^2dxds\bigg\}.
\end{aligned}
$$
By rescaling, $\widetilde{\cal M}=2^{\alpha\over 4-\alpha}{\cal M}$. By Lemma A-4, \cite{Chen-6},
\begin{align}\label{P-3}
{\cal E}_0={2-\alpha\over 2}2^{\alpha\over 2-\alpha}
\Big({4\widetilde{\cal M}\over 4-\alpha}\Big)^{4-\alpha\over 2-\alpha}
={2-\alpha\over 2}2^{2\alpha\over 2-\alpha}
\Big({4 {\cal M}\over 4-\alpha}\Big)^{4-\alpha\over 2-\alpha}.
\end{align}
Therefore, (\ref{P-1}) can be rewritten as\footnote{For comparison to Theorem 1.2, \cite{CH} in the setting
of time-independent Gaussian field, we formulate Theorem \ref{th-2} in terms of ${\cal M}$ instead of ${\cal E}_0$}
\begin{align}\label{P-4}
&\lim_{n\to\infty}{1\over n}\log (n!)^{3-\alpha}\E S_{2n}\big(g_{2n}(\cdot, 1, 0)\big)
=\log \bigg({(4-\alpha-2\alpha_0)^{4-\alpha-2\alpha_0\over 2}\over
2^3(4-\alpha-\alpha_0)^{4-\alpha-\alpha_0}}\Big({2{\cal E}_0\over 2-\alpha}\Big)^{2-\alpha\over 2}\bigg).
\end{align}
In the following subsections, we shall establish the upper and lower bounds, separately, for (\ref{P-4}).

\subsection{Upper bound for (\ref{P-4})}

In view of (\ref{B-1}) and (\ref{T-1}),
\begin{align}\label{P-5}
&\int_0^\infty e^{-\theta t} \E S_{2n}\big(g_{2n}(\cdot, t,0)\big)dt\\
&\le {\theta\over 2}\Big({1\over 2}\Big)^{3n}{1\over n!}\int_0^\infty dt\exp\Big\{-{\theta^2\over 2}t\Big\}\cr
&\times\E_0\bigg[\int_0^t\!\int_0^t\Big\vert \theta( s-r)+i \big(\beta(s)-\beta(r)\big)\Big\vert^{-\alpha_0}
\gamma\big(B(s)-B(r)\big)dsdr\nonumber
\bigg]^n.
\end{align}

Let $\theta>0$ be fixed but arbitrary (for a while). Given $\eta>0$, write
\begin{align}\label{P-6}
&\int_0^\infty dt\exp\Big\{-{\theta^2\over 2}t\Big\}
\E_0\bigg[\int_0^t\!\int_0^t\Big\vert \theta( s-r)+i \big(\beta(s)-\beta(r)\big)\Big\vert^{-\alpha_0}
\gamma\big(B(s)-B(r)\big)dsdr\bigg]^n\nonumber\\
&=\bigg\{\int_0^{\eta^{-2}n}+\int_{\eta^{-2}n}^\infty\bigg\}dt\exp\Big\{-{\theta^2\over 2}t\Big\}\\
&\times\E_0\bigg[\int_0^t\!\int_0^t\Big\vert \theta( s-r)+i \big(\beta(s)-\beta(r)\big)\Big\vert^{-\alpha_0}
\gamma\big(B(s)-B(r)\big)dsdr\bigg]^n.\nonumber
\end{align}
For the first term on the right hand side, we use the scaling identity
$$
\begin{aligned}
&\int_0^t\!\int_0^t\Big\vert \theta( s-r)+i \big(\beta(s)-\beta(r)\big)\Big\vert^{-\alpha_0}
\gamma\big(B(s)-B(r)\big)dsdr\\
&\buildrel d\over =t^{4-\alpha-2\alpha_0\over 2}
\int_0^1\!\int_0^1\Big\vert \theta( s-r)+i {1\over\sqrt{t}}\big(\beta(s)-\beta(r)\big)\Big\vert^{-\alpha_0}
\gamma\big(B(s)-B(r)\big)dsdr.
\end{aligned}
$$
So we have
$$
\begin{aligned}
&\int_0^{\eta^{-2}n} dt\exp\Big\{-{\theta^2\over 2}t\Big\}
  \E_0\bigg[\int_0^t\!\int_0^t\Big\vert \theta( s-r)
  +i \big(\beta(s)-\beta(r)\big)\Big\vert^{-\alpha_0}
\gamma\big(B(s)-B(r)\big)dsdr\bigg]^n\\
&=\int_0^{\epsilon^{-2}n} dt\exp\Big\{-{\theta^2\over 2}t\Big\}
  t^{{4-\alpha-2\alpha_0\over 2}n}\\
&\times\E_0\bigg[\int_0^1\!\int_0^1\Big\vert \theta( s-r)+i
  {1\over\sqrt{t}}\big(\beta(s)-\beta(r)\big)\Big\vert^{-\alpha_0}
\gamma\big(B(s)-B(r)\big)dsdr\bigg]^n\\
&\le \E_0\bigg[\int_0^1\!\int_0^1\Big\vert \theta( s-r)
  +i {\eta\over\sqrt{n}}\big(\beta(s)-\beta(r)\big)\Big\vert^{-\alpha_0}
\gamma\big(B(s)-B(r)\big)dsdr\bigg]^n\\
&\times\int_0^\infty \exp\Big\{-{\theta^2\over 2}t\Big\}t^{{4-\alpha-2\alpha_0\over 2}n}dt\\
&=\E_0\bigg[\int_0^1\!\int_0^1\Big\vert \theta( s-r)+i {\eta\over\sqrt{n}}\big(\beta(s)-\beta(r)\big)\Big\vert^{-\alpha_0}
\gamma\big(B(s)-B(r)\big)dsdr\bigg]^n\\
&\times\Big({2\over\theta^2}\Big)^{1+{4-\alpha-2\alpha_0\over 2}n}
\Gamma\Big(1+{4-\alpha-2\alpha_0\over 2}n\Big)\\
&=\big(1+o(1)\big)^n(n!)^{2-\alpha_0}(4-\alpha-2\alpha_0)^{{4-\alpha-2\alpha_0\over 2}n}
\theta^{-(4-\alpha-\alpha_0)n}\Big({2{\cal E}_0\over 2-\alpha}\Big)^{{2-\alpha\over 2}n}\hskip.2in (n\to\infty)
\end{aligned}
$$
where the last step follows from (\ref{T-20}) and Stirling formula.

As for the second term, we use the bound of Taylor expansion
$$
\begin{aligned}
&\E_0\bigg[\int_0^t\!\int_0^t\Big\vert \theta( s-r)+i \big(\beta(s)-\beta(r)\big)\Big\vert^{-\alpha_0}
\gamma\big(B(s)-B(r)\big)dsdr\bigg]^n\\
&\le n!t^{(1-\alpha_0)n}
\E_0\exp\bigg\{{1\over t^{1-\alpha_0}}\int_0^t\!\int_0^t\Big\vert \theta( s-r)+i \big(\beta(s)-\beta(r)\big)\Big\vert^{-\alpha_0}
\gamma\big(B(s)-B(r)\big)dsdr\bigg\}\\
&\le \big(1+o(1)\big)^nn!t^{(1-\alpha_0)n}\exp\Big\{\theta^{-{2\alpha_0\over 2-\alpha}}{\cal E}_0t\Big\}
\hskip.2in (n\to\infty)
\end{aligned}
$$
for large $t$, where the last step follows from Theorem \ref{th-6} with
$b=\eta=1$. Take $\theta>0$ sufficiently large so that
$$
c(\theta)\equiv{\theta^2\over 2}-\theta^{-{2\alpha_0\over 2-\alpha}}{\cal E}_0>0.
$$
We have
$$
\begin{aligned}
&\int_{\eta^{-2}n}^\infty dt\exp\Big\{-{\theta^2\over 2}t\Big\}
  \E_0\bigg[\int_0^t\!\int_0^t\Big\vert \theta( s-r)+i
  \big(\beta(s)-\beta(r)\big)\Big\vert^{-\alpha_0}
\gamma\big(B(s)-B(r)\big)dsdr\bigg]^n\\
&\le \big(1+o(1)\big)^nn!\int_{\eta^{-2}n}^\infty t^{(1-\alpha_0)n}\exp\{-c(\theta)t\}dt\hskip.2in (n\to\infty).
\end{aligned}
$$
We now claim that
\begin{align}\label{P-7}
  \lim_{\eta\to 0^+}\limsup_{n\to\infty}{1\over n}\log (n!)^{-(1-\alpha_0)}
  \int_{\eta^{-2}n}^\infty t^{(1-\alpha_0)n}\exp\{-c(\theta)t\}dt=-\infty.
\end{align}
Indeed, consider an i.i.d. sequence $X_1,\cdots, X_n$ with common distribution
$\Gamma \Big(1-\alpha_0, c(\theta)\Big)$, i.e., they have the common
density
$$
f(x)=\Gamma\big(1-\alpha_0\big)^{-1} c(\theta)^{(1-\alpha_0)}
x^{-\alpha_0}\exp\Big\{-c(\theta)x\Big\}\hskip.2in x>0
$$
and $X_0\sim\exp(c(\theta))$
 independent of $\{X_1,\cdots, X_n\}$. We have that
$$
X_0+X_1+\cdots X_n\sim\Gamma\Big(1+n(1-\alpha_0), \hskip.05in
c(\theta)\Big).
$$
Therefore
$$
\begin{aligned}
  &\Gamma\big(1+n(1-\alpha_0)\big)^{-1} c(\theta)^{1+(1-\alpha_0)n}
    \int_{\eta^{-2}n}^\infty t^{(1-\alpha_0)n}\exp\{-c(\theta)t\}dt\\
&=\P\Big\{{X_0+X_1+\cdots +X_n\over n}\ge\eta^{-2}\Big\}.
\end{aligned}
$$
Therefore, (\ref{P-7}) follows form Cram\'er large deviation
(Theorem 2.2.3, p.27, \cite{DZ}) which particularly leads to
$$
\lim_{\eta\to 0^+}\limsup_{n\to\infty}{1\over n}\log
\P\Big\{{X_0+X_1+\cdots +X_n\over n}\ge\eta^{-2}\Big\}=-\infty.
$$
In summary,
$$
\begin{aligned}
  &\int_{\eta^{-2}n}^\infty dt \exp\Big\{-{\theta^2\over 2} t\Big\}
    \int _0^t\!\int_0^t\Big\vert \theta( s-r)+i \big(\beta(s)-\beta(r)\big)\Big\vert^{-\alpha_0}
\gamma\big(B(s)-B(r)\big)dsdr\bigg]^n\\
&\le \big(1+o(1)\big)^n(n!)^{2-\alpha_0}\exp\{-L_\eta n\}
\end{aligned}
$$
where $L_\eta>0$ can be sufficiently large if $\eta$ is sufficiently small.

So we reach the point that in the decomposition (\ref{P-6}),
the bound of the first term
dominates the bound of the second term as $\eta>0$ is small. Consequently, by (\ref{P-5}) we have
$$
\begin{aligned}
&\int_0^\infty e^{-\theta t} \E S_{2n}\big(g_{2n}(\cdot, t,0)\big)dt\\
&\le \big(1+o(1)\big)^n\Big({1\over 2}\Big)^{3n}(n!)^{1-\alpha_0}
(4-\alpha-2\alpha_0)^{{4-\alpha-2\alpha_0\over 2}n}
\theta^{-(4-\alpha-\alpha_0)n}\Big({2{\cal E}_0\over 2-\alpha}\Big)^{{2-\alpha\over 2}n}.
\end{aligned}
$$
On the other hand, by (\ref{P-2})
$$
\begin{aligned}
&\int_0^\infty e^{-\theta t} \E S_{2n}\big(g_{2n}(\cdot, t,0)\big)dt=\E S_{2n}\big(g_{2n}(\cdot, 1,0)\big)
\int_0^\infty e^{-\theta t}t^{(4-\alpha-\alpha_0)n}dt\\
&=\theta^{-(1+(4-\alpha-\alpha_0)n}\Gamma\big(1+(4-\alpha-\alpha_0)n\big)\E S_{2n}\big(g_{2n}(\cdot, 1,0)\big)\\
&=\big(1+o(1)\big)^n\theta^{-(4-\alpha-\alpha_0)n}(4-\alpha-\alpha_0)^{(4-\alpha-\alpha_0)n}
(n!)^{4-\alpha-\alpha_0}\E S_{2n}\big(g_{2n}(\cdot, 1,0)\big).
\end{aligned}
$$
So we have the upper bound of (\ref{P-4}):
\begin{align}\label{P-8}
&\limsup_{n\to\infty}{1\over n}\log (n!)^{3-\alpha}\E S_{2n}\big(g_{2n}(\cdot, 1, 0)\big)
\le\log \bigg({(4-\alpha-2\alpha_0)^{4-\alpha-2\alpha_0\over 2}\over
2^3(4-\alpha-\alpha_0)^{4-\alpha-\alpha_0}}\Big({2{\cal E}_0\over 2-\alpha}\Big)^{2-\alpha\over 2}\bigg).
\end{align}
\qed

\subsection{Lower bound for (\ref{P-4})}

Take $\theta=1$. By (\ref{B-1}) and (\ref{B-2})
$$
\begin{aligned}
&\int_0^\infty e^{-t} \E S_{2n}\big(g_{2n}(\cdot, t,0)\big)dt
=\Gamma(\alpha_0)^{-n}{1\over 2}\Big({1\over 2}\Big)^{3n}{1\over n!}
\int_0^\infty dt\exp\Big\{-{t\over 2}\Big\}\\
&\times\E_0\bigg[\E^\kappa\int_{\R_+\times\R^d}\bigg\vert\int_0^t\exp\Big\{i\lambda\big(\kappa(s)+\beta(s)\big)+i\xi\cdot B(s)\Big\}ds\bigg\vert^2{d\lambda\over \lambda^{1-\alpha_0}}\mu(d\xi)\bigg]^n\\
&\ge \Gamma(\alpha_0)^{-n}{1\over 2}\Big({1\over 2}\Big)^{3n}{1\over n!}
\int_0^\infty dt\exp\Big\{-{t\over 2}\Big\}\\
&\times\E_0\bigg[\E^\kappa\int_{\R_+\times\R^d}\bigg\vert\E^\beta\int_0^t\exp\Big\{i\lambda\big(\kappa(s)+\beta(s)\big)+i\xi\cdot B(s)\Big\}ds\bigg\vert^2{d\lambda\over \lambda^{1-\alpha_0}}\mu(d\xi)\bigg]^n\\
&=\Gamma(\alpha_0)^{-n}{1\over 2}\Big({1\over 2}\Big)^{3n}{1\over n!}
\int_0^\infty dt\exp\Big\{-{t\over 2}\Big\}\\
&\times\E_0\bigg[\E^\kappa\int_{\R_+\times\R^d}\bigg\vert\int_0^t\exp\Big\{i\lambda\kappa(s)-{\lambda^2\over 2}s
+i\xi\cdot B(s)\Big\}ds\bigg\vert^2{d\lambda\over \lambda^{1-\alpha_0}}\mu(d\xi)\bigg]^n
\end{aligned}
$$
where the inequality follows from Jensen's inequality. Further,
$$
\begin{aligned}
&\E^\kappa\int_{\R_+\times\R^d}\bigg\vert\int_0^t\exp\Big\{i\lambda\kappa(s)-{\lambda^2\over 2}s
+i\xi\cdot B(s)\Big\}ds\bigg\vert^2{d\lambda\over \lambda^{1-\alpha_0}}\mu(d\xi)\\
&=\int_{\R_+}{d\lambda\over\lambda^{1-\alpha_0}}
\int_0^t\!\!\int_0^t\exp\Big\{-\lambda\vert s-r\vert-{\lambda^2\over 2 }(s+r)\Big\}
\gamma\big(B(s)-B(r)\big)dsdr\\
&\buildrel d\over =t^{4-\alpha-2\alpha_0\over 2}\int_{\R_+}{d\lambda\over\lambda^{1-\alpha_0}}
\int_0^1\!\!\int_0^1\exp\Big\{-\lambda\vert s-r\vert-{\lambda^2\over 2 t}(s+r)\Big\}
\gamma\big(B(s)-B(r)\big)dsdr\\
&\ge t^{4-\alpha-2\alpha_0\over 2}\exp\Big\{-(\delta^2 t)^{-1}\Big\}\int_0^{\delta^{-1}}{d\lambda\over\lambda^{1-\alpha_0}}
\int_0^1\!\!\int_0^1\exp\Big\{-\lambda\vert s-r\vert\Big\}\gamma\big(B(s)-B(r)\big)dsdr\\
&= \Gamma(\alpha_0)
t^{4-\alpha-2\alpha_0\over 2}\exp\Big\{-(\delta^2 t)^{-1}\Big\}
\int_0^1\!\!\int_0^1\gamma^0_\delta(s-r)\gamma\big(B(s)-B(r)\big)dsdr.
\end{aligned}
$$
Here we recall (\ref{M-17}) for the definition of $\gamma^0_\delta(\cdot)$.

Therefore,
$$
\begin{aligned}
&\int_0^\infty e^{-t} \E S_{2n}\big(g_{2n}(\cdot, t,0)\big)dt\\
&\ge {1\over 2}\Big({1\over 2}\Big)^{3n}{1\over n!}
\E_0\bigg[\int_0^1\!\!\int_0^1\gamma^0_\delta(s-r)\gamma\big(B(s)-B(r)\big)dsdr\bigg]^n\\
&\times\int_0^\infty \exp\Big\{-{t\over 2}\Big\}\exp\Big\{-n(\delta^2 t)^{-1}\Big\}t^{{4-\alpha-2\alpha_0\over 2}n}dt\\
&\ge {1\over 2}\Big({1\over 2}\Big)^{3n}{1\over n!}
\E_0\bigg[\int_0^1\!\!\int_0^1\gamma^0_\delta(s-r)\gamma\big(B(s)-B(r)\big)dsdr\bigg]^n\\
&\times\exp\Big\{-{n^{1-\eta}\over \delta^2}\Big\}\int_{n^\eta}^\infty \exp\Big\{-{t\over 2}\Big\}
t^{{4-\alpha-2\alpha_0\over 2}n}dt
\end{aligned}
$$
where $0<\eta<1$.
By constructing relevant  independent Gamma-distributed random variables (as we did in the proof of (\ref{P-7})) and
by the law of large numbers, one can show that
$$
\int_{n^\eta}^\infty \exp\Big\{-{t\over 2}\Big\}
t^{{4-\alpha-2\alpha_0\over 2}n}dt=\big(1+o(1)\big) 2^{1+{4-\alpha-2\alpha_0\over 2}n}
\Gamma\Big(1+{4-\alpha-2\alpha_0\over 2}n\Big)\hskip.2in (n\to\infty).
$$

By (\ref{T-19}) and Stirling formula, therefore,
$$
\begin{aligned}
&\int_0^\infty e^{-t} \E S_{2n}\big(g_{2n}(\cdot, t,0)\big)dt
\ge \big(1+o(1)\big)^n\Big({1\over 2}\Big)^{3n}(n!)^{1-\alpha_0}
(4-\alpha -2\alpha_0)^{{4-\alpha-2\alpha_0\over 2}n}
\Big({2{\cal E}_\delta\over 2-\alpha}\Big)^{{2-\alpha\over 2}n}
\end{aligned}
$$
as $n\to\infty$. 

On the other hand, by (\ref{P-2})
$$
\begin{aligned}
&\int_0^\infty e^{-t} \E S_{2n}\big(g_{2n}(\cdot, t,0)\big)dt
=\E S_{2n}\big(g_{2n}(\cdot, 1,0)\big)\int_0^\infty e^{-t}t^{(4-\alpha-\alpha_0)n}dt\\
&=\Gamma\Big(1+(4-\alpha-\alpha_0)n\big)\E S_{2n}\big(g_{2n}(\cdot, 1,0)\big)\\
&=\big(1+o(1)\big)^n (n!)^{4-\alpha-\alpha_0}(4-\alpha-\alpha_0)^{(4-\alpha-\alpha_0)n}
\E S_{2n}\big(g_{2n}(\cdot, 1,0)\big).
\end{aligned}
$$
Combining them together, we have 
\begin{align}\label{P-9}
&\liminf_{n\to\infty}{1\over n}\log (n!)^{3-\alpha}\E S_{2n}\big(g_{2n}(\cdot, 1, 0)\big)
\ge\log \bigg({(4-\alpha-2\alpha_0)^{4-\alpha-2\alpha_0\over 2}\over
2^3(4-\alpha-\alpha_0)^{4-\alpha-\alpha_0}}\Big({2{\cal E}_\delta\over 2-\alpha}\Big)^{2-\alpha\over 2}\bigg).
\end{align}
Letting $\delta\to 0^+$ on the righ hand side gives the expected lower bound. \qed

\section{Appendix}

Let $\theta>0$. Corresponding to the norms $\|\cdot\|_{j,k}^{(1)}$ and $\|\cdot\|_{j,k}^{(2)}$ introduced in (\ref{D-4})
and (\ref{D-5}), the following lemma is concerned with the bound given in Theorem \ref{th-3} with
$G_l(t,x)=e^{-\theta t}G(t,x)$.

\begin{lemma}\label{A} Assume (\ref{intro-5}).
\begin{enumerate}
	\item[(i)] 	For any $\theta>0$,
	\begin{align}\label{A-1}
	&\int_{\R_+\times\R^d}\bigg\vert\int_0^\infty\!\!\int_{\R^d}
e^{-(\theta+\lambda) t+i\xi\cdot x}G(t,x)dxdt\bigg\vert {d\lambda\over \lambda^{1-\alpha_0}}\mu(d\xi)\\
&=\int_{\R^+\times\R^d}{1\over (\theta+\lambda)^2+\vert\xi\vert^2}{d\lambda\over \lambda^{1-\alpha_0}}
\mu(d\xi)<\infty.\nonumber
\end{align}
	\item[(ii)] For any $\theta>0$,
\begin{align}\label{A-2}
 &\int_0^\infty\!\int_0^\infty dsdt e^{-\theta(t+s)}\int_{\R^d\times\R^d}\vert s-t\vert^{-\alpha_0}
 \gamma(x-y)G(t,x)G(s,y)dxdy\\
  &={1\over 2}\int_{\R^+\times\R^d}{d\lambda\over\lambda^{1-\alpha_0}}\mu(d\xi)
{1\over (\theta+\lambda)^2+\vert\xi\vert^2}{1\over \theta^2+\vert\xi\vert^2}\nonumber\\
&+{1\over 2\theta}\int_{\R^+\times\R^d}{d\lambda\over\lambda^{1-\alpha_0}}\mu(d\xi)
{\theta+\lambda\over (\theta+\lambda)^2+\vert\xi\vert^2}{1\over \theta^2+\vert\xi\vert^2}\nonumber\\
&<\infty.\nonumber
\end{align}
Further, there is a constant $C>0$ such that
	\begin{align}\label{A-3}
 \int_0^\infty\!\int_0^\infty dsdt e^{-\theta(t+s)}\int_{\R^d\times\R^d}\vert s-t\vert^{-\alpha_0}
 \gamma(x-y)G(t,x)G(s,y)dxdy\le C\theta^{-2}
\end{align}
for large $\theta$.
	\end{enumerate}

\end{lemma}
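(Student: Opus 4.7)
For part (i), the plan is to evaluate the Fourier--Laplace transform of $G$ in closed form. By (\ref{M-11}),
\begin{align*}
\int_0^\infty\!\!\int_{\R^d} e^{-(\theta+\lambda)t+i\xi\cdot x}G(t,x)\,dxdt=\int_0^\infty e^{-(\theta+\lambda)t}\frac{\sin(|\xi|t)}{|\xi|}\,dt=\frac{1}{(\theta+\lambda)^2+|\xi|^2},
\end{align*}
which is real and positive, so equals its absolute value, yielding the identity in (\ref{A-1}). For finiteness I would split the $\mu$-integration at $|\xi|=1$: on $\{|\xi|\le 1\}$ the inner $\lambda$-integral is bounded by $\int_0^\infty\lambda^{\alpha_0-1}(\theta+\lambda)^{-2}d\lambda<\infty$; on $\{|\xi|>1\}$ the scaling $\lambda=|\xi|u$ bounds it by $C(\theta)|\xi|^{\alpha_0-2}$. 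Both regimes are comparable to $(1+|\xi|^2)^{(\alpha_0-2)/2}$, so (\ref{intro-5}) closes the estimate.

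For part (ii), the heart of the argument is a closed-form computation. Using $\gamma(x-y)=\int e^{i\xi\cdot(x-y)}\mu(d\xi)$ together with (\ref{M-11}),
\begin{align*}
\int_{\R^d\times\R^d}\gamma(x-y)G(t,x)G(s,y)\,dxdy=\int_{\R^d}\frac{\sin(|\xi|t)\sin(|\xi|s)}{|\xi|^2}\,\mu(d\xi);
\end{align*}
invoking (\ref{intro-18}) for $|s-t|^{-\alpha_0}$ then reduces the left side of (\ref{A-2}) to an integral whose inner kernel is
\begin{align*}
J(\theta,\lambda,|\xi|):=\int\!\!\int_{\R_+^2}e^{-\theta(s+t)-\lambda|s-t|}\sin(|\xi|s)\sin(|\xi|t)\,dsdt.
\end{align*}
I would evaluate $J$ by the substitution $u=s-t$, $v=s+t$ on the wedge $\{v\ge|u|\}$, together with $2\sin a\sin b=\cos(a-b)-\cos(a+b)$. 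Performing the $v$-integral on $[|u|,\infty)$ first (two elementary exponential--cosine integrals) and then the $u$-integral over $\R$ (standard Laplace transforms of cosine and sine) collapses everything to
\begin{align*}
J(\theta,\lambda,|\xi|)=\frac{|\xi|^2\,(2\theta+\lambda)}{2\theta\,(\theta^2+|\xi|^2)\,((\theta+\lambda)^2+|\xi|^2)},
\end{align*}
and the split $2\theta+\lambda=\theta+(\theta+\lambda)$ then produces the two-term decomposition on the right side of (\ref{A-2}).

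Finiteness of that right side reduces to part (i): $(\theta^2+|\xi|^2)^{-1}$ is uniformly bounded, and each resulting $\lambda$--$\xi$ integral is either of the form (\ref{A-1}) or dominated by it after the bound $\tfrac{\theta+\lambda}{(\theta+\lambda)^2+|\xi|^2}\le(\theta+\lambda)^{-1}$. For (\ref{A-3}), I would use $(\theta+\lambda)^2+|\xi|^2\ge(\theta+\lambda)^2$ to reduce the $\lambda$-integrals to Beta integrals yielding factors of $\theta^{\alpha_0-2}$, and dominate the remaining $\mu$-integral by splitting into $\{|\xi|\le 1\}$ and $\{|\xi|>1\}$: the low-frequency part is $O(\theta^{-2})$ via $\mu(B(0,1))<\infty$, while the high-frequency part is absorbed using the weighted AM--GM inequality $\theta^{2-\alpha_0}|\xi|^{\alpha_0}\le\theta^2+|\xi|^2$ together with (\ref{intro-5}). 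The main obstacle is purely computational: the closed-form evaluation of $J$. The $(u,v)$-substitution is the essential move, since it decouples the oscillatory factor from the singular kernel $|s-t|^{-\alpha_0}$; once that decoupling is in place the remaining cancellations are forced, and the combinatorial factor $(2\theta+\lambda)/(2\theta)$ is precisely what produces the two-term split.
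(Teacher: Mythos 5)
Your computations for the identities are correct, and the overall route is the same as the paper's: part (i) is verified identically, and for part (ii) the paper likewise inserts the spectral representation of $\gamma$ and the Laplace representation (\ref{intro-18}) of $|s-t|^{-\alpha_0}$ and then evaluates the resulting exponential--sine integrals. Your closed form
$$
J(\theta,\lambda,|\xi|)=\frac{|\xi|^2(2\theta+\lambda)}{2\theta\,(\theta^2+|\xi|^2)\,((\theta+\lambda)^2+|\xi|^2)}
$$
is exactly what the paper obtains (it integrates $t$ over $[s,\infty)$ first via $\sin(t|\xi|)=\sin((t-s)|\xi|)\cos(s|\xi|)+\cos((t-s)|\xi|)\sin(s|\xi|)$ rather than your $(u,v)$-substitution, but the two are equivalent), and your split $2\theta+\lambda=\theta+(\theta+\lambda)$ reproduces the two-term right-hand side of (\ref{A-2}). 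The finiteness arguments in (i) and in the first part of (ii) are also sound.

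There is, however, one step in your proof of (\ref{A-3}) that fails as written: the high-frequency estimate via the inequality $\theta^{2-\alpha_0}|\xi|^{\alpha_0}\le\theta^2+|\xi|^2$. You have the Young exponents transposed. With your inequality the remaining $\mu$-integral on $\{|\xi|>1\}$ is controlled by $\theta^{-(2-\alpha_0)}\int_{\{|\xi|>1\}}|\xi|^{-\alpha_0}\mu(d\xi)$, and since $\alpha_0<1<2-\alpha_0$, the weight $|\xi|^{-\alpha_0}$ decays \emph{more slowly} than the weight $|\xi|^{-(2-\alpha_0)}$ that (\ref{intro-5}) controls; e.g.\ for a Riesz-type spectral measure $\mu(d\xi)=|\xi|^{\alpha-d}d\xi$ with $\alpha_0<\alpha$ and $\alpha_0+\alpha<2$, your integral diverges while (\ref{intro-5}) holds. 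The correct choice is the one the paper uses, $\theta^{\alpha_0}|\xi|^{2-\alpha_0}\le C(\theta^2+|\xi|^2)$ (Young with $p=2/\alpha_0$, $q=2/(2-\alpha_0)$), which gives $\int_{\{|\xi|>1\}}(\theta^2+|\xi|^2)^{-1}\mu(d\xi)\le C\theta^{-\alpha_0}\int_{\{|\xi|>1\}}|\xi|^{-(2-\alpha_0)}\mu(d\xi)$; this is finite by (\ref{intro-5}) and the factor $\theta^{-\alpha_0}$ combines with your $\theta^{\alpha_0-2}$ (resp.\ $\theta^{-1}\cdot\theta^{\alpha_0-1}$) from the $\lambda$-integral to give exactly $C\theta^{-2}$. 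With that one exponent corrected, your argument closes.
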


\proof The identity in (\ref{A-1}) follows from
$$
\int_0^\infty\!\!\int_{\R^d}e^{-(\theta+\lambda) t+i\xi\cdot x}G(t,x)dxdt\
=\int_0^\infty e^{-(\theta+\lambda) t}{\sin (t\vert\xi\vert)\over\vert\xi\vert}dt
={1\over (\theta+\lambda)^2+\vert\xi\vert^2}
$$
where the first equality follows from (\ref{M-11}) and the second from integration by
parts. To show the finiteness, 
$$
\int_{\R^+\times\R^d}{1\over (\theta+\lambda)^2+\vert\xi\vert^2}{d\lambda\over \lambda^{1-\alpha_0}}
\mu(d\xi)
\le \int_{\R^+\times\R^d}{1\over \lambda^2+(\theta^2+\vert\xi\vert^2)}{d\lambda\over \lambda^{1-\alpha_0}}
\mu(d\xi)
$$
and by variable substitution
$$
\int_{\R^+}{1\over \lambda^2+(\theta^2+\vert\xi\vert^2)}{d\lambda\over \lambda^{1-\alpha_0}}
=\bigg({1\over \theta^2+\vert\xi\vert^2}\bigg)^{2-\alpha_0\over 2}\int_{\R_+}{1\over 1+\lambda^2}
{d\lambda\over \lambda^{1-\alpha_0}}.
$$
By (\ref{intro-5}), therefore,
$$
\int_{\R^+\times\R^d}{1\over (\theta+\lambda)^2+\vert\xi\vert^2}{d\lambda\over \lambda^{1-\alpha_0}}
\mu(d\xi)
\le C\int_{\R^d}\bigg({1\over \theta^2+\vert\xi\vert^2}\bigg)^{2-\alpha_0\over 2}\mu(d\xi)<\infty.
$$

We now prove (\ref{A-2}). By (\ref{intro-5}),
$$
\begin{aligned}
&\int_0^\infty\!\int_0^\infty dsdte^{-\theta(t+s)}\int_{\R^d\times\R^d}\vert s-t\vert^{-\alpha_0}\gamma(x-y)G(t,x)G(s,y)dxdy\cr
&=2\int_{\R^+\times\R^d}{d\lambda\over\lambda^{1-\alpha_0}}\mu(d\xi)
\int\!\!\int_{\{s\le t\}}dsdt e^{-\theta(s+t)}
e^{-\lambda (t-s)}\int_{\R^d\times\R^d}e^{i\xi\cdot(x-y)}G(t x)G(s,y)dxdy\\
&=2\int_{\R^+\times\R^d}{d\lambda\over\lambda^{1-\alpha_0}}\mu(d\xi)
\int\!\!\int_{\{s\le t\}}dsdt
e^{-\lambda (t-s)}e^{-\theta(s+t)}{\sin (t\vert\xi\vert)\sin (s\vert\xi\vert)\over\vert\xi\vert^2}\\
&=2\int_{\R^+\times\R^d}{d\lambda\over\lambda^{1-\alpha_0}}{\mu(d\xi)\over\vert\xi\vert^2}
\int_0^\infty ds e^{-2\theta s}\sin (s\vert\xi\vert)\int_s^\infty e^{-(\theta+\lambda)(t-s)}\sin (t\vert\xi\vert)dt.
\end{aligned}
$$
By the relation
$$
\sin (t\vert\xi\vert)=\sin ((t-s)\vert\xi\vert)\cos (s\vert\xi\vert)+\cos((t-s)\vert\xi\vert)\sin(s\vert\xi\vert)
$$
and by integration by parts
$$
\begin{aligned}
&\int_s^\infty e^{-(\theta+\lambda)(t-s)}\sin (t\vert\xi\vert)dt\cr
&=\cos(s\vert\xi\vert)\int_0^\infty e^{-(\theta+\lambda)t}\sin (t\vert\xi\vert)dt
+\sin (s\vert\xi\vert)\int_0^\infty e^{-(\theta+\lambda)t}\cos (t\vert\xi\vert)dt\\
&=\cos(s\vert\xi\vert){\vert\xi\vert\over (\theta+\lambda)^2+\vert\xi\vert^2}
+\sin(s\vert\xi\vert){\theta+\lambda\over (\theta+\lambda)^2+\vert\xi\vert^2}.
\end{aligned}
$$
Thus,
$$
\begin{aligned}
&\int_0^\infty\!\int_0^\infty dsdte^{-\theta(t+s)}
\int_{\R^d\times\R^d}\vert s-t\vert^{-\alpha_0}\gamma(x-y)G(t,x)G(s,y)dxdy\\
&=2\int_{\R^+\times\R^d}{d\lambda\over\lambda^{1-\alpha_0}}{\mu(d\xi)\over\vert\xi\vert^2}
{\vert\xi\vert\over (\theta+\lambda)^2+\vert\xi\vert^2}
\int_0^\infty  e^{-2\theta s}\sin (s\vert\xi\vert)\cos(s\vert\xi\vert)ds\\
&+2\int_{\R^+\times\R^d}{d\lambda\over\lambda^{1-\alpha_0}}{\mu(d\xi)\over\vert\xi\vert^2}
{\theta+\lambda\over (\theta^2+\lambda)^2+\vert\xi\vert^2}
\int_0^\infty  e^{-2\theta s}\sin^2 (s\vert\xi\vert)ds.
\end{aligned}
$$
Using integration by parts again
$$
\int_0^\infty  e^{-2\theta s}\sin (s\vert\xi\vert)\cos(s\vert\xi\vert)ds
={1\over 2}\int_0^\infty e^{-2\theta s}\sin (2s\vert\xi\vert)ds={1\over 2}{2\vert\xi\vert\over 4\theta^2+4\vert\xi\vert^2}
={1\over 4}{\vert\xi\vert\over \theta^2+\vert\xi\vert^2}.
$$
A similar treatment also leads to
$$
\int_0^\infty  e^{-2\theta s}\sin^2 (s\vert\xi\vert)ds
={\vert\xi\vert\over 2\theta}\int_0^\infty  e^{-2\theta s}\sin(2s\vert\xi\vert)ds
={1\over 4\theta}{\vert\xi\vert^2\over \theta^2+\vert\xi\vert^2}.
$$
Bringing them together leads to the identity leads to the identity in (\ref{A-2}).

Establishing the finiteness in (\ref{A-2}) is an easy job and can be seen from the following estimate for (\ref{A-3}).
To show (\ref{A-3}), all we need is to bound the two terms on the right hand side of (\ref{A-2}) separately. We first
 work on the second term.

By variable substitution
$$
\begin{aligned}
&\int_0^\infty{d\lambda\over\lambda^{1-\alpha_0}}{\theta+\lambda\over (\theta+\lambda)^2+\vert\xi\vert^2}
=\int_\theta^\infty{d\lambda\over(\lambda -\theta)^{1-\alpha_0}}{\lambda\over \lambda^2+\vert\xi\vert^2}\\
&={1\over\vert \xi\vert^{1-\alpha_0}}\int_{\vert\xi\vert^{-1}\theta}^\infty 
{d\lambda\over(\lambda-\vert\xi\vert^{-1}\theta)^{1-\alpha_0}}{\lambda\over \lambda^2+1}.
\end{aligned}
$$
Consider the decomposition
$$
\int_{\vert\xi\vert^{-1}\theta}^\infty 
{d\lambda\over(\lambda-\vert\xi\vert^{-1}\theta)^{1-\alpha_0}}{\lambda\over \lambda^2+1}
\le \bigg\{\int_{\vert\xi\vert^{-1}\theta}^{2\vert\xi\vert^{-1}\theta}+\int_{2\vert\xi\vert^{-1}\theta}^\infty \bigg\}
{d\lambda\over(\lambda-\vert\xi\vert^{-1}\theta)^{1-\alpha_0}}{\lambda\over \lambda^2+1}.
$$
For the first term
$$
\begin{aligned}
&\int_{\vert\xi\vert^{-1}\theta}^{2\vert\xi\vert^{-1}\theta}{d\lambda\over(\lambda-\vert\xi\vert^{-1}\theta)^{1-\alpha_0}}{\lambda\over \lambda^2+1}\le\Big({\vert\xi\vert\over\theta}\Big)\int_{\vert\xi\vert^{-1}\theta}^{2\vert\xi\vert^{-1}\theta}{d\lambda\over(\lambda-\vert\xi\vert^{-1}\theta)^{1-\alpha_0}}\\
&=\Big({\vert\xi\vert\over\theta}\Big)\int_0^{\vert\xi\vert^{-1}\theta}{d\lambda\over\lambda^{1-\alpha_0}}
={1\over\alpha_0}\Big({\vert\xi\vert\over\theta}\Big)^{1-\alpha_0}.
\end{aligned}
$$
As for the second term
$$
\int_{2\vert\xi\vert^{-1}\theta}^\infty 
{d\lambda\over(\lambda-\vert\xi\vert^{-1}\theta)^{1-\alpha_0}}{\lambda\over \lambda^2+1}
\le \int_{2\vert\xi\vert^{-1}\theta}^\infty 
{d\lambda\over(\lambda/2)^{1-\alpha_0}}{\lambda\over \lambda^2+1}\le C\Big({\vert\xi\vert\over\theta}\Big)^{1-\alpha_0}.
$$
In summary, we have the bound (the constant $C$ can be different from place to place in our argument)
$$
\int_0^\infty{d\lambda\over\lambda^{1-\alpha_0}}{\theta+\lambda\over (\theta+\lambda)^2+\vert\xi\vert^2}
\le C\theta^{-(1-\alpha_0)}.
$$
By Fubini's theorem, therefore,
$$
\begin{aligned}
&\int_{\R^+\times\R^d}{d\lambda\over\lambda^{1-\alpha_0}}\mu(d\xi)
{\theta+\lambda\over (\theta+\lambda)^2+\vert\xi\vert^2}{1\over \theta^2+\vert\xi\vert^2}
\le C\theta^{-(1-\alpha_0)}\int_{\R^d}{1\over \theta^2+\vert\xi\vert^2}\mu(d\xi)\\
&\le C\theta^{-(3-\alpha_0)}\mu(\{\vert\xi\vert\le 1\})+C\theta^{-(1-\alpha_0)}\int_{\{\vert\xi\vert\ge 1\}}{1\over \theta^2+\vert\xi\vert^2}\mu(d\xi).
\end{aligned}
$$
By Minkowski inequality with $p=2/\alpha_0$ and $q=2(2-\alpha_0)^{-1}$,
$$
\theta^2+\vert\xi\vert^2\ge C^{-1}\theta^{\alpha_0}\vert\xi\vert^{2-\alpha_0}.
$$
So we have 
$$
\theta^{-(1-\alpha_0)}\int_{\{\vert\xi\vert\ge 1\}}{1\over \theta^2+\vert\xi\vert^2}\mu(d\xi)
\le C\theta^{-1}\int_{\{\vert\xi\vert\ge 1\}}\vert\xi\vert^{-(2-\alpha_0)}\mu(d\xi).
$$
The integral on the right hand side is finite under (\ref{intro-5}). Therefore, we have established the expected bound
$$
{1\over 2\theta}\int_{\R^+\times\R^d}{d\lambda\over\lambda^{1-\alpha_0}}\mu(d\xi)
{\theta+\lambda\over (\theta+\lambda)^2+\vert\xi\vert^2}{1\over \theta^2+\vert\xi\vert^2}
\le C\theta^{-2}.
$$
Notice that
$$
{1\over (\theta+\lambda)^2+\vert\xi\vert^2}{1\over \theta^2+\vert\xi\vert^2}
\le{1\over\theta}{\theta+\lambda\over (\theta+\lambda)^2+\vert\xi\vert^2}{1\over \theta^2+\vert\xi\vert^2}.
$$
The first term in (\ref{A-2}) yields the same bound. \qed

\vskip 1.in

\begin{tabular}{lll}
Xia Chen  \\
Department of Mathematics  \\
University of Tennessee \\
Knoxville TN 37996, USA  \\
{\tt xchen3@tennessee.edu} 
\end{tabular}

\end{document}